\newtheorem{theorem}{Theorem}[section]
\newtheorem{lemma}[theorem]{Lemma}
\theoremstyle{definition}
 \theoremstyle{remark}
\newtheorem{remark}[theorem]{Remark}
\newtheorem{corollary}[theorem]{Corollary}
 \numberwithin{equation}{section}
\begin{document}

\title[High Order Hardy-Sobolev-Maz'ya inequalities]{Hardy-Sobolev-Maz'ya inequalities for higher order derivatives on half spaces}

\author{Guozhen Lu}
\address{Department of Mathematics,  University of Connecticut, Storrs, CT 06269, USA}

\email{guozhen.lu@uconn.edu}

\author{Qiaohua  Yang}
\address{School of Mathematics and Statistics, Wuhan University, Wuhan, 430072, People's Republic of China}

\email{qhyang.math@gmail.com}

\thanks{ The first author was partly supported by a  US NSF grant and a Simons Fellowship from the Simons Foundation and the second author's research was partly supported by   the National Natural
Science Foundation of China (No.11201346).}


\subjclass[2000]{Primary  35J20; 46E35}



\keywords{Hardy inequalities; Adams' inequalities; Hyperbolic spaces; Hardy-Sobolev-Maz'ya inequalities, Paneitz and GJMS operators, Hardy-Littlewood-Sobolev inequality on hyperbolic spaces, Fourier transforms on hyperbolic spaces.}

\begin{abstract}

By using, among other things,  the Fourier analysis techniques on hyperbolic and symmetric spaces, 
we establish the Hardy-Sobolev-Maz'ya inequalities for higher order derivatives on half spaces. The proof relies on a Hardy-Littlewood-Sobolev inequality on hyperbolic spaces which is of its independent interest.
We also give an alternative proof of  Benguria,  Frank and   Loss' work  concerning the sharp constant in the Hardy-Sobolev-Maz'ya inequality in the three dimensional upper half space. Finally, we show the sharp constant in the Hardy-Sobolev-Maz'ya inequality for bi-Laplacian in the   upper half space of dimension five coincides with the Sobolev constant.

\end{abstract}

\maketitle


\section{Introduction}
Let $n\geq3$. The Hardy-Sobolev-Maz'ya inequalities on half space $\mathbb{R}^{n}_{+}=\{(x_{1},\cdots,x_{n}): x_{1}>0\}$ reads that (see \cite{maz}, Section 2.1.6)
\begin{equation}\label{1.1}
\int_{\mathbb{R}^{n}_{+}}|\nabla u|^{2}dx-\frac{1}{4}\int_{\mathbb{R}^{n}_{+}}\frac{u^{2}}{x^{2}_{1}}dx\geq C\left(\int_{\mathbb{R}^{n}_{+}}
x^{\gamma}_{1}|u|^{p}dx\right)^{\frac{2}{p}},\;\;\; u\in C^{\infty}_{0}(\mathbb{R}^{n}_{+}),
\end{equation}
where $C$ is a positive constant which is independent of $u$,  $2<p\leq\frac{2n}{n-2}$ and $\gamma=\frac{(n-2)p}{2}-n$.
In particular, $\gamma=0$ for $p=\frac{2n}{n-2}$ and we have
\begin{equation}\label{1.2}
\int_{\mathbb{R}^{n}_{+}}|\nabla u|^{2}dx-\frac{1}{4}\int_{\mathbb{R}^{n}_{+}}\frac{u^{2}}{x^{2}_{1}}dx\geq C_{n}\left(\int_{\mathbb{R}^{n}_{+}}
|u|^{\frac{2n}{n-2}}dx\right)^{\frac{n-2}{2}},\;\;\; u\in C^{\infty}_{0}(\mathbb{R}^{n}_{+}).
\end{equation}
It has been shown by R. D. Benguria, R. L. Frank and  M. Loss (\cite{bfl}) that the sharp constant $C_{3}$ in (\ref{1.2}) for $n=3$ coincides with
the corresponding Sobolev constant.
In the paper \cite{m1}, G. Mancini and K. Sandeep showed inequality (\ref{1.1}) is equivalent to the following
Poincar\'e-Sobolev inequalities on hyperbolic space $\mathbb{H}^{n} \, (n\geq3)$:
\begin{equation}\label{1.3}
\int_{\mathbb{H}^{n}}|\nabla_{\mathbb{H}}u|^{2}dV-\frac{(n-1)^{2}}{4}\int_{\mathbb{H}^{n}}u^{2}dV\geq C\left(\int_{\mathbb{H}^{n}}|u|^{p}dV\right)^{\frac{2}{p}},\;\;u\in C^{\infty}_{0}(\mathbb{H}^{n}),
\end{equation}
where $2<p\leq\frac{2n}{n-2}$, $\nabla_{\mathbb{H}}$ is the hyperbolic gradient and  $dV$ is the hyperbolic volume element.
For $n=2$, there holds some Hardy-Trudinger-Moser inequality on $\mathbb{H}^{2}$ (see \cite{mst,ly, wy}).
For more information about Hardy-Sobolev-Maz'ya inequalities, we refer to \cite{fmt1,fmt2,fl,mas,te}.

\medskip

A natural question is whether  inequalities (\ref{1.1}) and (\ref{1.3}) hold for higher  order
derivatives. In this paper we shall show this is indeed the case. To state our results, let us introduce  some conventions.
It is known that hyperbolic space has its half space model and Poincar\'e model and both models are equivalent.
We denote by $\mathbb{B}^{n}$  the Poincar\'e model. It is the unit ball
\[\mathbb{B}^{n}=\{x=(x_{1},\cdots,x_{n})\in \mathbb{R}^{n}| |x|<1\}\]
equipped with the usual Poincar\'e metric
\[
ds^{2}=\frac{4(dx^{2}_{1}+\cdots+dx^{2}_{n})}{(1-|x|^{2})^{2}}.
\]
The hyperbolic volume element is
$
dV=\left(\frac{2}{1-|x|^{2}}\right)^{n}dx$ and the distance from the origin to $x\in \mathbb{B}^{n}$ is
$
\rho(x)=\log\frac{1+|x|}{1-|x|}.$ The associated
Laplace-Beltrami operator is given by
\[
\Delta_{\mathbb{H}}=\frac{1-|x|^{2}}{4}\left\{(1-|x|^{2})\sum^{n}_{i=1}\frac{\partial^{2}}{\partial
x^{2}_{i}}+2(n-2)\sum^{n}_{i=1}x_{i}\frac{\partial}{\partial
x_{i}}\right\}.
\]
The  spectral gap of $-\Delta_{\mathbb{H}}$ on
$L^{2}(\mathbb{B}^{n})$ is $\frac{(n-1)^{2}}{4}$ (see e.g.
\cite{m1}), i.e.
\begin{equation}\label{1.4}
\int_{\mathbb{B}^{n}}|\nabla_{\mathbb{H}}u|^{2}dV\geq\frac{(n-1)^{2}}{4}\int_{\mathbb{B}^{n}}u^{2}dV,\;\;\;
u\in C^{\infty}_{0}(\mathbb{B}^{n}).
\end{equation}
The the GJMS operators on $\mathbb{B}^{n}$  is defined as follows (see \cite{GJMS},  \cite{j})
 \begin{equation}\label{1.5}
 P_{k}=P_{1}(P_{1}+2)\cdot\cdots\cdot(P_{1}+k(k-1)),\;\; k\in\mathbb{N},
\end{equation}
where $P_{1}=-\Delta_{\mathbb{H}}-\frac{n(n-2)}{4}$ is the conformal Laplacian on $\mathbb{B}^{n}$.
The sharp Sobolev inequalities on $\mathbb{B}^{n}$  reads that (see \cite{heb} for $k=1$ and \cite{liu} for $2\leq k<\frac{n}{2}$)
 \begin{equation}\label{1.6}
\int_{\mathbb{B}^{n}}(P_{k}u)udV\geq S_{n,k}\left(\int_{\mathbb{B}^{n}}|u|^{\frac{2n}{n-2k}}dV\right)^{\frac{n-2k}{n}},\;\;u\in
C^{\infty}_{0}(\mathbb{B}^{n}),\;\;1\leq k<\frac{n}{2},
\end{equation}
where $S_{n,k}$ is the best $k$-th order Sobolev constant. On the other hand,
By (\ref{1.4}) and (\ref{1.5}), we have the following Poincar\'e inequality
 \begin{equation}\label{1.7}
\int_{\mathbb{B}^{n}}(P_{k}u)udV\geq \prod^{k}_{i=1}\frac{(2i-1)^{2}}{4}\int_{\mathbb{B}^{n}}u^{2}dV,\;\;u\in C^{\infty}_{0}(\mathbb{B}^{n}).
\end{equation}
As we will show in the proof of Theorem 1.4 (see Section 5), inequality (\ref{1.7}) is equivalent to the Hardy inequality on the upper half space
\[
\int_{\mathbb{R}^{n}_{+}}|\nabla^{k}u|^{2}dx\geq \prod^{k}_{i=1}\frac{(2i-1)^{2}}{4}\int_{\mathbb{R}^{n}_{+}}\frac{u^{2}}{x^{2k}_{1}}dx,\;\; u\in C^{\infty}_{0}(\mathbb{R}^{n}_{+}),
\]
and the constant $\prod\limits^{k}_{i=1}\frac{(2i-1)^{2}}{4}$ is sharp (see \cite{ow}).

\medskip

Next we define another $2k$-th order operator $Q_{k}$ with $k\geq2$:
 \begin{equation}\label{1.8}
\begin{split}
Q_{k}=&\left(-\Delta_{\mathbb{H}}-\frac{(n-1)^{2}}{4}\right)(P_{1}+2)\cdot\cdots\cdot(P_{1}+k(k-1))\\
=&P_{k}-\frac{1}{4}(P_{1}+2)
\cdot\cdots\cdot(P_{1}+k(k-1)).
\end{split}
 \end{equation}
To this end, we have the following Sobolev type inequalities for $Q_{k}$.
\begin{theorem}
Let $2\leq k<\frac{n}{2}$ and $2<p\leq\frac{2n}{n-2k}$. There exists a positive constant $C$ such that for each $u\in C^{\infty}_{0}(\mathbb{B}^{n})$,
\begin{equation}\label{1.9}
\int_{\mathbb{B}^{n}}(Q_{k}u)udV\geq C\left(\int_{\mathbb{B}^{n}}|u|^{p}dV\right)^{\frac{2}{p}}.
\end{equation}
\end{theorem}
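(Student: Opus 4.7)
The starting point is the factorization
\[
Q_k = L\cdot M, \qquad L := -\Delta_{\mathbb{H}} - \tfrac{(n-1)^2}{4}, \qquad M := \prod_{i=2}^{k}\bigl(P_1 + i(i-1)\bigr).
\]
Both $L$ and $M$ are polynomials in $-\Delta_{\mathbb{H}}$ and are therefore simultaneously diagonalised by the spectral calculus on $\mathbb{B}^n$. The spectral gap (1.4) gives $L\geq 0$ and, equivalently, $P_1\geq \tfrac14$, which makes every factor of $M$ strictly positive and yields $M\geq c_k I$ for some $c_k>0$. In particular the self-adjoint positive square root $M^{1/2}$ is well defined, and setting $v := M^{1/2}u$ turns the left-hand side of (1.9) into
\[
\int_{\mathbb{B}^n} (Q_k u)\,u\,dV \;=\; \int_{\mathbb{B}^n} (L v)\,v\,dV.
\]

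The Poincar\'e--Sobolev inequality (1.3), applied to $v$, then gives
\[
\int_{\mathbb{B}^n} (L v)\,v\,dV \;\geq\; C\,\|v\|_{L^{q}(\mathbb{B}^n)}^{\,2}, \qquad q\in\bigl(2,\tfrac{2n}{n-2}\bigr],
\]
and the remaining task is to bound $\|v\|_q$ from below by $\|u\|_p$. Writing $u = M^{-1/2}v$, this becomes the mapping statement
\[
M^{-1/2}\colon L^q(\mathbb{B}^n)\longrightarrow L^p(\mathbb{B}^n)\ \text{bounded},
\]
where $M^{-1/2}$ has order $-(k-1)$ and is built from the resolvents $(P_1+i(i-1))^{-1/2}$ with \emph{positive} shifts (hyperbolic analogues of Bessel potentials). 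The needed boundedness, with the flexibility $p\geq q$ and $\tfrac{1}{q}-\tfrac{1}{p}\leq \tfrac{k-1}{n}$, is exactly what a Hardy--Littlewood--Sobolev inequality on hyperbolic space (the result of independent interest advertised in the abstract) should deliver. For any target $p\in(2,\tfrac{2n}{n-2k}]$ one then chooses $q=p$ when $p\leq\tfrac{2n}{n-2}$ and $q=\tfrac{2n}{n-2}$ otherwise; a quick check shows $\tfrac{1}{q}-\tfrac{1}{p}\leq \tfrac{k-1}{n}$ in both cases, covering the entire exponent range.

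The spectral-calculus portion (the factorization, the commutation, and the formation of $v$) is routine bookkeeping, and the Poincar\'e--Sobolev estimate is already available in (1.3). The genuine obstacle is the last step: establishing the hyperbolic HLS inequality for $M^{-1/2}$. Concretely, this calls on the Helgason--Fourier transform on $\mathbb{H}^n$ to write each $(P_1+i(i-1))^{-1/2}$ as convolution against an explicit radial kernel, on sharp pointwise estimates for these kernels reflecting both a Euclidean-type singularity near the diagonal and the exponential volume growth at infinity, and on composing $k-1$ such convolutions to obtain the kernel of $M^{-1/2}$. Once this is in place, the hyperbolic HLS bound yields the required mapping property and, combined with the Poincar\'e--Sobolev step, finishes the proof.
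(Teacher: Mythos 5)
Your algebraic reduction ($Q_k=LM$, $v=M^{1/2}u$) and the exponent bookkeeping ($q=p$ when $p\le\frac{2n}{n-2}$, $q=\frac{2n}{n-2}$ otherwise, which indeed gives $\frac1q-\frac1p\le\frac{k-1}{n}$) are correct, but the step you defer --- boundedness of $M^{-1/2}\colon L^q(\mathbb{B}^n)\to L^p(\mathbb{B}^n)$ --- is the entire content of the theorem, and it is \emph{not} what the paper's hyperbolic Hardy--Littlewood--Sobolev inequality (Theorem 1.2) delivers. Theorem 1.2 is the symmetric, scaling-balanced statement: it bounds the bilinear form with kernel $\bigl(2\sinh\frac{\rho}{2}\bigr)^{-\lambda}$ when \emph{both} functions lie in the same $L^{p}$, $p=\frac{2n}{2n-\lambda}$; for an operator of order $-(k-1)$ this yields only the single dual pair $L^{\frac{2n}{n+k-1}}\to L^{\frac{2n}{n-k+1}}$. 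You need instead the asymmetric pair $L^{\frac{2n}{n-2}}\to L^{\frac{2n}{n-2k}}$ at the critical exponent, and the diagonal pairs $L^{p}\to L^{p}$, $p\neq 2$, in the subcritical range; neither is an instance of Theorem 1.2, and simple interpolation of Theorem 1.2 with the trivial $L^2\to L^2$ bound produces only dual pairs $L^q\to L^{q'}$, never these. The diagonal case is not an HLS statement at all: by the paper's own estimate (4.3) the kernel of $(P_1+i(i-1))^{-1}$ decays like $e^{-(n-2+2i)\rho/2}$, which loses against the volume growth $e^{(n-1)\rho}$ whenever $2i<n$ (always true here since $i\le k<\frac n2$), so these kernels are not in $L^1(dV)$, Young's inequality is unavailable, and $L^p$-boundedness for $p\neq2$ is a genuine Kunze--Stein/Fourier-multiplier problem on hyperbolic space. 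Moreover, to even start you need pointwise bounds for the kernels of the \emph{half}-powers $(P_1+i(i-1))^{-1/2}$, which require a subordination argument; the paper's Section 3 treats only first powers of resolvents. All of these assertions are true and provable, but they are precisely where the work lies, so as written the proposal has a genuine gap at its central step.

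It is instructive to see how the paper sidesteps every one of these issues. In the subcritical range $2<p\le\frac{2n}{n-2}$ it never substitutes $v=M^{1/2}u$: by the Plancherel formula, $\prod_{i=2}^{k}\frac{\lambda^2+(2i-1)^2}{4}\ge\prod_{i=2}^{k}\frac{(2i-1)^2}{4}$ gives the quadratic-form inequality $\int_{\mathbb{B}^n}(Q_ku)u\,dV\ge c_k\int_{\mathbb{B}^n}\bigl(\bigl(-\Delta_{\mathbb{H}}-\tfrac{(n-1)^2}{4}\bigr)u\bigr)u\,dV$, and the Mancini--Sandeep inequality (1.3) is applied to $u$ itself, so no inverse of $M$ ever appears. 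At the critical exponent the paper estimates the kernel of the \emph{full} inverse $Q_k^{-1}$ by $\bigl(\sinh\frac{\rho}{2}\bigr)^{2k-n}$ (Lemmas 3.1, 3.4, 4.2, 4.3), applies the symmetric HLS of Theorem 1.2 to the quadratic form $\int fQ_k^{-1}f\,dV$, and converts this into a Sobolev inequality by the duality substitution $g=f^{\frac{n+2k}{n-2k}}$, so only the diagonal HLS is ever invoked; intermediate exponents follow by H\"older interpolation between the two cases. If you want to salvage your argument, the cleanest repair is to adopt this structure: your factorization already proves the form inequality $Q_k\ge c_kL$ (use it directly on $u$ for $p\le\frac{2n}{n-2}$), and the critical case should be run through $Q_k^{-1}$ and the symmetric HLS rather than through mapping properties of $M^{-1/2}$.
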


The proof of Theorem 1.1 depends on a Hardy-Littlewood-Sobolev inequlity
on hyperbolic spaces which is of independent interest.
\begin{theorem}
Let $0<\lambda<n$ and $p=\frac{2n}{2n-\lambda}$. Then for $f,g\in L^{p}(\mathbb{B}^{n})$,
\begin{equation}\label{b1.10}
\left|\int_{\mathbb{B}^{n}}\int_{\mathbb{B}^{n}}\frac{f(x)g(y)}{\left(2\sinh\frac{\rho(T_{y}(x))}{2}\right)^{\lambda}}dV_{x}dV_{y}\right|\leq C_{n,\lambda}\|f\|_{p}\|g\|_{p},
\end{equation}
 where $T_{y}(x)$ is the M\"obius
transformations (see Section 2.2) and
\begin{equation}\label{1.10}
C_{n,\lambda}=\pi^{\lambda/2}\frac{\Gamma(n/2-\lambda/2)}{\Gamma(n-\lambda/2)}\left(\frac{\Gamma(n/2)}{\Gamma(n)}\right)^{-1+\lambda/n}
\end{equation}
 is the best Hardy-Littlewood-Sobolev constant on $\mathbb{R}^{n}$.
Furthermore, the constant $C_{n,\lambda}$
 is sharp and there is no nonzero extremal function.
\end{theorem}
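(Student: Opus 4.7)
The plan is to reduce the hyperbolic Hardy--Littlewood--Sobolev inequality to the classical Euclidean one by exploiting the conformal equivalence between the Poincar\'e ball model and the Euclidean unit ball. Two ingredients make the reduction essentially automatic: the M\"obius invariance of $\rho$, which gives $\rho(T_y(x))=\rho(x,y)$, and the explicit hyperbolic distance formula in $\mathbb{B}^n$,
\[
2\sinh\frac{\rho(x,y)}{2}=\frac{2|x-y|}{\sqrt{(1-|x|^2)(1-|y|^2)}}.
\]

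Given $f,g\in L^p(\mathbb{B}^n,dV)$ with $p=\frac{2n}{2n-\lambda}$, I would introduce the transformed functions
\[
F(x)=f(x)\left(\frac{2}{1-|x|^2}\right)^{n-\lambda/2},\qquad G(y)=g(y)\left(\frac{2}{1-|y|^2}\right)^{n-\lambda/2},
\]
extended by zero outside $\mathbb{B}^n$. The exponent $n-\lambda/2$ equals $n/p$, so the conformal weight is chosen precisely to yield $\|F\|_{L^p(\mathbb{R}^n,dx)}=\|f\|_{L^p(\mathbb{B}^n,dV)}$, and similarly for $G$. Substituting the distance formula together with $dV_x=(2/(1-|x|^2))^n\,dx$ into the left-hand side, every power of $(1-|x|^2)$, $(1-|y|^2)$ and of $2$ cancels, producing the identity
\[
\int_{\mathbb{B}^n}\!\!\int_{\mathbb{B}^n}\frac{f(x)g(y)\,dV_x dV_y}{\bigl(2\sinh\frac{\rho(T_y(x))}{2}\bigr)^\lambda}=\int_{\mathbb{B}^n}\!\!\int_{\mathbb{B}^n}\frac{F(x)G(y)}{|x-y|^\lambda}\,dx\,dy.
\]
Viewing $F,G$ as functions on $\mathbb{R}^n$ supported in $\mathbb{B}^n$ and applying the sharp Euclidean Hardy--Littlewood--Sobolev inequality of Lieb then yields the desired estimate with exactly the constant $C_{n,\lambda}$ displayed in (\ref{1.10}).

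For sharpness I would use a concentration argument based on the conformal invariance of the Euclidean HLS inequality. By Lieb's classification, the Euclidean extremals are scalar multiples of $(a^2+|x-x_0|^2)^{-(2n-\lambda)/2}$; choosing $x_0=0$ and letting $a\to 0^+$ concentrates the $L^p$-mass at the origin, so that multiplying by $\chi_{\mathbb{B}^n}$ alters both sides of the Euclidean HLS inequality by a factor tending to $1$. The corresponding $f$'s (obtained by inverting the substitution) form a maximizing sequence, showing that $C_{n,\lambda}$ cannot be improved. Non-existence of an extremal is then immediate: any would have to come, via the same substitution, from a Euclidean extremal supported in $\mathbb{B}^n$, but every Lieb extremal is strictly positive on all of $\mathbb{R}^n$.

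The only nontrivial step is the algebraic identity in the second paragraph, where I must verify that the hyperbolic kernel, the two volume elements, and the substitution $f\mapsto F$ combine so that all conformal factors annihilate one another, leaving exactly $|x-y|^{-\lambda}$. Once this bookkeeping is carried out, the rest is a direct appeal to the sharp Euclidean HLS inequality and a standard conformal concentration argument for the optimality and non-attainment.
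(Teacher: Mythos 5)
Your proposal is correct and follows essentially the same route as the paper: the identity $2\sinh\frac{\rho(T_y(x))}{2}=\frac{2|x-y|}{\sqrt{(1-|x|^2)(1-|y|^2)}}$ converts the hyperbolic kernel and volume elements into the Euclidean kernel $|x-y|^{-\lambda}$ acting on exactly the transformed functions $\widetilde{f}=f\left(\frac{2}{1-|x|^2}\right)^{n-\lambda/2}$, $\widetilde{g}=g\left(\frac{2}{1-|y|^2}\right)^{n-\lambda/2}$ used in the paper, after which Lieb's sharp Euclidean HLS inequality gives the constant $C_{n,\lambda}$, and sharpness plus non-attainment follow from Lieb's classification of extremals (which are positive on all of $\mathbb{R}^n$, hence never supported in $\mathbb{B}^n$). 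Your concentration argument with $(a^2+|x|^2)^{-(2n-\lambda)/2}$, $a\to 0^+$, simply makes explicit the sharpness claim that the paper asserts without detail.
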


The reader may  wonder why function $\sinh\frac{\rho}{2}$ appears in the inequality (\ref{b1.10}). In fact, the Green's function of conformal Laplacian
$-\Delta_{\mathbb{H}}-\frac{n(n-2)}{4}$ is (see Section 3, (\ref{3.5}))
\begin{equation*}
\begin{split}
(-\Delta_{\mathbb{H}}-n(n-2)/4)^{-1}
=&\frac{\Gamma(\frac{n}{2})}{2(2\pi)^{\frac{n}{2}}(\sinh\rho)^{n-2}}\int^{\pi}_{0}(\cosh\rho+\cos t)^{\frac{n-4}{2}}
\sin tdt\\
=&\frac{\Gamma(\frac{n}{2})}{2(2\pi)^{\frac{n}{2}}(\sinh\rho)^{n-2}}\cdot\frac{2}{n-2}\left[(\cosh\rho+1)^{\frac{n-2}{2}}-(\cosh\rho-1)^{\frac{n-2}{2}}\right]\\
=&\frac{1}{n(n-2)\alpha(n)}\left[\frac{1}{(2\sinh\frac{\rho}{2})^{n-2}}-\frac{1}{(2\cosh\frac{\rho}{2})^{n-2}}\right], \;\;n\geq3,
\end{split}
\end{equation*}
where $\alpha(n)=\frac{\pi^{\frac{n}{2}}}{\Gamma(\frac{n}{2}+1)}$ denotes the volume of the unit ball in $\mathbb{R}^{n}$.

\medskip

Notice that, by (\ref{1.4}),
 \begin{equation}\label{b1.12}
\int_{\mathbb{B}^{n}}((P_{1}+2)
\cdot\cdots\cdot(P_{1}+k(k-1))u)udV\geq \prod^{k}_{i=2}\frac{(2i-1)^{2}}{4}\int_{\mathbb{B}^{n}}u^{2}dV,\;\;u\in C^{\infty}_{0}(\mathbb{B}^{n}).
\end{equation}
Combing (\ref{b1.12}), (\ref{1.7}) and (\ref{1.8}) yields
\begin{equation*}
\int_{\mathbb{B}^{n}}(P_{k}u)udV- \prod^{k}_{i=1}\frac{(2i-1)^{2}}{4}\int_{\mathbb{B}^{n}}u^{2}dV\geq \int_{\mathbb{B}^{n}}(Q_{k}u)udV.
\end{equation*}
Therefore, as an application of Theorem 1.1, we have the following Poincar\'e-Sobolev inequalities for higher order derivatives.
\begin{theorem}
Let $2\leq k<\frac{n}{2}$ and $2<p\leq\frac{2n}{n-2k}$. There exists a positive constant $C_{n,p}$ such that for each $u\in C^{\infty}_{0}(\mathbb{B}^{n})$,
\begin{equation}\label{1.11a}
\int_{\mathbb{B}^{n}}(P_{k}u)udV- \prod^{k}_{i=1}\frac{(2i-1)^{2}}{4}\int_{\mathbb{B}^{n}}u^{2}dV\geq C_{n,p}\left(\int_{\mathbb{B}^{n}}|u|^{p}dV\right)^{\frac{2}{p}}.
\end{equation}
\end{theorem}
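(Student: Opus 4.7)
The plan is to obtain Theorem 1.3 as a direct consequence of Theorem 1.1 together with the spectral gap (\ref{1.4}), essentially along the lines sketched by the authors in the text preceding the statement. The starting point is the factorization built into (\ref{1.8}),
$$P_k = Q_k + \tfrac{1}{4}(P_1+2)(P_1+6)\cdots(P_1+k(k-1)),$$
so that, after subtracting the relevant multiple of $\|u\|_{L^2}^2$, controlling $\int(P_ku)u\,dV - \prod_{i=1}^k \tfrac{(2i-1)^2}{4}\int u^2\,dV$ from below reduces to controlling $\int (Q_k u)u\,dV$ from below, provided the polynomial factor is controlled from below by the correct scalar multiple of the identity.

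Next I would establish the Poincar\'e-type inequality (\ref{b1.12}) via spectral theory. The spectral gap (\ref{1.4}) gives $-\Delta_{\mathbb{H}} \geq (n-1)^2/4$ as a self-adjoint operator on $L^2(\mathbb{B}^n)$, and hence $P_1 = -\Delta_{\mathbb{H}} - n(n-2)/4 \geq 1/4$. Since the factors $P_1 + i(i-1)$ are polynomials in the single self-adjoint operator $P_1$, they mutually commute, and applying functional calculus to the polynomial $f(\lambda)=\prod_{i=2}^{k}(\lambda+i(i-1))$, which is monotone increasing on $[1/4,\infty)$, one obtains
$$\int_{\mathbb{B}^n}\Bigl(\prod_{i=2}^{k}(P_1+i(i-1))\Bigr)u\cdot u\,dV \;\geq\; \prod_{i=2}^{k}\frac{(2i-1)^2}{4}\int_{\mathbb{B}^n}u^2\,dV.$$

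Combining this with the factorization and the numerical identity $\prod_{i=1}^{k}\tfrac{(2i-1)^2}{4} = \tfrac{1}{4}\prod_{i=2}^{k}\tfrac{(2i-1)^2}{4}$ (since the $i=1$ factor equals $1/4$), I would deduce
$$\int_{\mathbb{B}^n}(P_ku)u\,dV - \prod_{i=1}^{k}\frac{(2i-1)^2}{4}\int_{\mathbb{B}^n}u^2\,dV \;\geq\; \int_{\mathbb{B}^n}(Q_ku)u\,dV,$$
after which Theorem 1.1 immediately produces the desired $L^p$ lower bound with an explicit constant $C_{n,p}$. The reduction is entirely algebraic, so the real content of Theorem 1.3 is absorbed into Theorem 1.1 and, through it, the hyperbolic Hardy-Littlewood-Sobolev inequality of Theorem 1.2; the main obstacle therefore lies not in the present deduction but in those earlier results.
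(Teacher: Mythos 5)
Your proposal is correct and takes essentially the same route as the paper: the authors likewise combine the decomposition \eqref{1.8} with the Poincar\'e-type bound \eqref{b1.12} (which they deduce from the spectral gap \eqref{1.4}) to obtain $\int_{\mathbb{B}^{n}}(P_{k}u)u\,dV-\prod_{i=1}^{k}\frac{(2i-1)^{2}}{4}\int_{\mathbb{B}^{n}}u^{2}\,dV\geq\int_{\mathbb{B}^{n}}(Q_{k}u)u\,dV$, and then invoke Theorem 1.1. Your functional-calculus argument for \eqref{b1.12} simply fills in a detail the paper leaves implicit (equivalently one can read it off from the Plancherel formula, as the paper does in its proof of Theorem 1.1).
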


This improves the  Poincar\'e-Sobolev inequalities  (\ref{1.6}) for higher order derivatives on the hyperbolic spaces $\mathbb{B}^n$
established  by G. Liu \cite{liu}.

If $p=\frac{2n}{n-2k}$, then, by (\ref{1.6}), the sharp constant $C_{n,p}$ in (\ref{1.11a}) is less than or equal to the the best $k$-th order Sobolev constant.

As an application of Theorem 1.3, we have the following Hardy-Sobolev-Maz'ya inequalities for higher order derivatives.
\begin{theorem}
Let $2\leq k<\frac{n}{2}$ and $2<p\leq\frac{2n}{n-2k}$. There exists a positive constant $C$ such that for each $u\in C^{\infty}_{0}(\mathbb{R}^{n}_{+})$,
\begin{equation}\label{1.12}
\int_{\mathbb{R}^{n}_{+}}|\nabla^{k}u|^{2}dx- \prod^{k}_{i=1}\frac{(2i-1)^{2}}{4}\int_{\mathbb{R}^{n}_{+}}\frac{u^{2}}{x^{2k}_{1}}dx\geq C\left(\int_{\mathbb{R}^{n}_{+}}x^{\gamma}_{1}|u|^{p}dx\right)^{\frac{2}{p}},
\end{equation}
where $\gamma=\frac{(n-2k)p}{2}-n$.

\medskip

In term of the ball model $\mathbb{B}^{n}$ (see section 2.2), inequality (\ref{1.12}) can be written as follows:
\begin{equation}\label{1.14}
\int_{\mathbb{B}^{n}}|\nabla^{k}u|^{2}dx- \prod^{k}_{i=1}(2i-1)^{2}\int_{\mathbb{B}^{n}}\frac{u^{2}}{(1-|x|^{2})^{2k}}dx\geq C\left(\int_{\mathbb{B}^{n}}(1-|x|^{2})^{\gamma}|u|^{p}dx\right)^{\frac{2}{p}}.
\end{equation}
\end{theorem}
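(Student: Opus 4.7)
The plan is to deduce Theorem 1.4 directly from Theorem 1.3 by exploiting the conformal equivalence between the hyperbolic metric and the flat Euclidean metric on the upper half space, together with the conformal covariance of the GJMS operators. Since $\mathbb{B}^n$ and the upper half-space model $\{x_1>0\}$ with metric $g_{\mathbb{H}} = x_1^{-2} g_E$ are isometric, Theorem 1.3 holds on the upper half-space model as well. The conformal factor is $\Omega = 1/x_1$, and I will use the standard intertwining rule
\begin{equation*}
P_{\mathbb{H}}^{(k)}(u) \;=\; \Omega^{-(n+2k)/2}\,(-\Delta)^{k}\!\bigl(\Omega^{(n-2k)/2}\,u\bigr) \;=\; x_{1}^{(n+2k)/2}\,(-\Delta)^{k}\!\bigl(x_{1}^{-(n-2k)/2}\,u\bigr),
\end{equation*}
using the fact that on the scalar-flat Euclidean space the GJMS operator reduces to $(-\Delta)^{k}$.

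Given $\tilde u \in C^{\infty}_{0}(\mathbb{R}^{n}_{+})$, I set $u(x) = x_{1}^{(n-2k)/2}\,\tilde u(x)$, which lies in $C^{\infty}_{0}(\mathbb{H}^{n})$. A direct change of variables, together with an integration by parts (legitimate because $\tilde u$ has compact support away from the boundary $\{x_1=0\}$), gives
\begin{equation*}
\int_{\mathbb{H}^{n}}(P_{\mathbb{H}}^{(k)} u)\,u\,dV \;=\; \int_{\mathbb{R}^{n}_{+}} |\nabla^{k}\tilde u|^{2}\,dx,\qquad \int_{\mathbb{H}^{n}} u^{2}\,dV \;=\; \int_{\mathbb{R}^{n}_{+}} \frac{\tilde u^{2}}{x_{1}^{2k}}\,dx,
\end{equation*}
\begin{equation*}
\int_{\mathbb{H}^{n}} |u|^{p}\,dV \;=\; \int_{\mathbb{R}^{n}_{+}} x_{1}^{\gamma}|\tilde u|^{p}\,dx, \qquad \gamma = \tfrac{(n-2k)p}{2}-n.
\end{equation*}
Substituting these three identities into Theorem 1.3 yields (1.12) immediately; moreover, when $k=1$ the same argument recovers the Mancini--Sandeep equivalence of (1.1) and (1.3), and taking $u\equiv 0$ in the Sobolev term recovers the sharp Hardy inequality with constant $\prod_{i=1}^{k}(2i-1)^{2}/4$ claimed below (1.7). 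The companion inequality (1.14) on the ball follows by the same scheme with $\Omega = 2/(1-|x|^{2})$; then $\Omega^{2k}$ produces an extra factor $4^{k}$ which converts the Poincar\'e constant $\prod(2i-1)^{2}/4$ on the hyperbolic side to $\prod(2i-1)^{2}$ on the Euclidean side, and the Jacobian factor $2^{n-(n-2k)p/2}$ gets absorbed into the constant $C$.

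The main obstacle is justifying the intertwining identity for $P_{\mathbb{H}}^{(k)}$, since this is what the whole argument rests on. For $k=1$ it is the well-known conformal transformation law of the conformal Laplacian $P_{1} = -\Delta_{\mathbb{H}} - n(n-2)/4$. For higher $k$, the cleanest route is to use the factorization (1.5): each factor $P_{1}+j(j-1)$ corresponds, via the conformal change $u \mapsto \Omega^{(n-2k)/2} u$ and successive application of the conformal Laplacian transformation together with the shift $j(j-1)$ in spectral parameter, to a flat factor which telescopes into $(-\Delta)^{k}$. Alternatively, one can appeal to the general GJMS covariance formula; either way the identity reduces the higher-order Hardy--Sobolev--Maz'ya inequality to the previously established Theorem 1.3.
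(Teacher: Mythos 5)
Your proposal is correct and follows essentially the same route as the paper: Section 5 also reduces the half-space inequality to Theorem 1.3 through the intertwining identity $x_1^{n/2+k}(-\Delta)^k\bigl(x_1^{k-n/2}f\bigr)=P_kf$ (Lemma 5.1) and the substitution $v=x_1^{(n-2k)/2}u$, and obtains the ball-model version from Liu's analogous identity quoted at the start of that section. The only substantive difference is how the intertwining identity is justified: the paper proves it by an explicit induction on $k$, in which the first-order terms $x_1\partial/\partial x_1$ generated at each step are shown to cancel, while you cite the general GJMS conformal covariance (legitimate, since the paper's definition of $P_k$ by the factorization is identified with the GJMS operator via the cited work of Juhl). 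Be aware, though, that your first suggested justification --- telescoping the factorization factor by factor --- does not work as literally stated, because a shifted conformal Laplacian $P_1+j(j-1)$ with $j\ge 2$ has no individual conformal transformation law; the cancellation occurs only for the full product, which is precisely what the paper's inductive computation in Lemma 5.1 verifies.
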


 The organization of this paper is as follows. In Section 2, we recall some necessary preliminary facts of hyperbolic spaces and give 
 the sharp estimates of Green's function  in Section 3. we prove Theorem 1.1 and Theorem 1.2. in   Section 4.  Theorem 1.4 is proved in Section 5.  In Section 6, We give an alternative proof of  the work of Benguria, Frank and   Loss \cite{bfl} concerning the sharp constant in the Hardy-Sobolev-Maz'ya inequality in case $n=3$.
In Section 7, we show that, as in the case of Euclidean space, the Hardy-Littlewood-Sobolev inequlity on hyperbolic space implies the sharp Sobolev inequalities obtained by Liu \cite{liu}. In the last section, we prove the best constant  coincides with the Sobolev constant in case $n=5$ and $k=2$, also via Hardy-Littlewood-Sobolev inequlity.

\section{Notations and preliminaries}
We begin by quoting some preliminary facts which will be needed in
the sequel and  refer to \cite{ah,ge,he,he2,hu,liup} for more information about this subject.

\subsection{The half space model $\mathbb{H}^{n}$}

It is given by $\mathbb{R}_{+}\times\mathbb{R}^{n-1}=\{(x_{1},\cdots,x_{n}): x_{1}>0\}$ equipped with the Riemannian metric
$ds^{2}=\frac{dx_{1}^{2}+\cdots+dx_{n}^{2}}{x^{2}_{1}}$. The induced Riemannian measure can be written as $dV=\frac{dx}{x^{n}_{1}}$, where $dx$ is the Lebesgue measure on
$\mathbb{R}^{n}$.
The hyperbolic gradient is $\nabla_{\mathbb{H}}=x_{1}\nabla$ and the
Laplace-Beltrami operator on $\mathbb{H}^{n}$ is given by
\begin{equation}\label{2.1}
\Delta_{\mathbb{H}}=x^{2}_{1}\Delta-(n-2)x_{1}\frac{\partial}{\partial x_{1}},
\end{equation}
where $\Delta=\sum^{n}_{i=1}\frac{\partial^{2}}{\partial
x^{2}_{i}}$ is the Laplace operator on $\mathbb{R}^{n}$.

\subsection{The ball model $\mathbb{B}^{n}$}
It is given by the unit ball
\[\mathbb{B}^{n}=\{x=(x_{1},\cdots,x_{n})\in \mathbb{R}^{n}| |x|<1\}\]
equipped with the usual Poincar\'e metric
\[
ds^{2}=\frac{4(dx^{2}_{1}+\cdots+dx^{2}_{n})}{(1-|x|^{2})^{2}}.
\]
The hyperbolic gradient is $\nabla_{\mathbb{H}}=\frac{1-|x|^{2}}{2}\nabla$ and
the
Laplace-Beltrami operator is given by
\[
\Delta_{\mathbb{H}}=\frac{1-|x|^{2}}{4}\left\{(1-|x|^{2})\sum^{n}_{i=1}\frac{\partial^{2}}{\partial
x^{2}_{i}}+2(n-2)\sum^{n}_{i=1}x_{i}\frac{\partial}{\partial
x_{i}}\right\}.
\]
Furthermore,  the half space model $\mathbb{H}^{n}$ and the ball model $\mathbb{B}^{n}$ are  equivalent.

\subsection{M\"obius
transformations}
For each $a\in\mathbb{B}^{n}$, we define the  M\"obius
transformations $T_{a}$ by (see e.g. \cite{ah,hu})
\[
T_{a}(x)=\frac{|x-a|^{2}a-(1-|a|^{2})(x-a)}{1-2x\cdot
a+|x|^{2}|a|^{2}},
\]
where $x\cdot a=x_{1}a_{1}+x_{2}a_{2}+\cdots +x_{n}a_{n}$ denotes
the  scalar product in $\mathbb{R}^{n}$. It is known that the measure on $\mathbb{B}^{n}$ is invariant with respect to the M\"obius
transformations.
A simple calculation shows
\begin{equation}\label{2.2}
\begin{split}
T_{a}(T_{a}(x))=&x;\\
1-|T_{a}(x)|^{2}=&\frac{(1-|a|^{2})(1-|x|^{2})}{1-2x\cdot a+|x|^{2}|a|^{2}};\\
|T_{a}(x)|=&\frac{|x-a|}{\sqrt{1-2x\cdot a+|x|^{2}|a|^{2}}};\\
\sinh\frac{\rho(T_{a}(x))}{2}=&\frac{|T_{a}(x)|}{\sqrt{1-|T_{a}(x)|^{2}}}=\frac{|x-a|}{\sqrt{(1-|a|^{2})(1-|x|^{2})}};\\
\cosh\frac{\rho(T_{a}(x))}{2}=&\frac{1}{\sqrt{1-|T_{a}(x)|^{2}}}=\frac{\sqrt{1-2x\cdot a+|x|^{2}|a|^{2}}}{\sqrt{(1-|a|^{2})(1-|x|^{2})}}.
\end{split}
\end{equation}

Using the M\"obius transformations, we can define the
distance  from $x$ to $y$ in $\mathbb{B}^{n}$ as follows
\begin{equation*}
\rho(x,y)=\rho(T_{x}(y))=\rho(T_{y}(x))=\log\frac{1+|T_{y}(x)|}{1-|T_{y}(x)|}.
\end{equation*}
Also using the M\"obius transformations,  we can define the convolution of
measurable functions $f$ and $g$ on $\mathbb{B}^{n}$ by (see e.g. \cite{liup})
\begin{equation}\label{2.3}
(f\ast g)(x)=\int_{\mathbb{B}^{n}}f(y)g(T_{x}(y))dV(y)
\end{equation}
provided this integral exists. It is easy to check that
\[
f\ast g= g\ast f.
\]
Furthermore, if $g$ is radial, i.e. $g=g(\rho)$, then (see e.g. \cite{liup}, Proposition 3.15)
\begin{equation}\label{2.4}
  (f\ast g)\ast h= f\ast (g\ast h)
\end{equation}
provided $f,g,h\in L^{1}(\mathbb{B}^{n})$

\subsection{Fourier transform on hyperbolic spaces}

Set
\[
e_{\lambda,\zeta}(x)=\left(\frac{\sqrt{1-|x|^{2}}}{|x-\zeta|}\right)^{n-1+i\lambda}, \;\; x\in \mathbb{B}^{n},\;\;\lambda\in\mathbb{R},\;\;\zeta\in\mathbb{S}^{n-1}.
\]
The Fourier transform of a function  $f$  on $\mathbb{B}^{n}$ can be defined as
\[
\widehat{f}(\lambda,\zeta)=\int_{\mathbb{B}^{n}} f(x)e_{-\lambda,\zeta}(x)dV
\]
provided this integral exists. If $g\in C^{\infty}_{0}(\mathbb{B}^{n})$ is radial, then
 $$\widehat{(f\ast g)}=\widehat{f}\cdot\widehat{g}.$$
Moreover,
the following inversion formula holds for $f\in C^{\infty}_{0}(\mathbb{B}^{n})$ (see e.g.  \cite{liup}):
\[
f(x)=D_{n}\int^{+\infty}_{-\infty}\int_{\mathbb{S}^{n-1}} \widehat{f}(\lambda,\zeta)e_{\lambda,\zeta}(x)|\mathfrak{c}(\lambda)|^{-2}d\lambda d\sigma(\varsigma),
\]
where $D_{n}=\frac{1}{2^{3-n}\pi |\mathbb{S}^{n-1}|}$ and $\mathfrak{c}(\lambda)$ is the  Harish-Chandra $\mathfrak{c}$-function given by (see e.g.  \cite{liup})
\[
\mathfrak{c}(\lambda)=\frac{2^{n-1-i\lambda}\Gamma(n/2)\Gamma(i\lambda)}{\Gamma(\frac{n-1+i\lambda}{2})\Gamma(\frac{1+i\lambda}{2})}.
\]
Similarly, there holds the Plancherel formula:
\begin{equation}\label{2.5}
\int_{\mathbb{B}^{n}}|f(x)|^{2}dV=D_{n}\int^{+\infty}_{-\infty}\int_{\mathbb{S}^{n-1}}|\widehat{f}(\lambda,\zeta)|^{2}|\mathfrak{c}(\lambda)|^{-2}d\lambda d\sigma(\varsigma).
\end{equation}

Since $e_{\lambda,\zeta}(x)$ is an eigenfunction of $\Delta_{\mathbb{H}}$ with eigenvalue $-\frac{(n-1)^{2}+\lambda^{2}}{4}$, it is easy to check that, for
$f\in C^{\infty}_{0}(\mathbb{B}^{n})$,
\[
\widehat{\Delta_{\mathbb{H}}f}(\lambda,\zeta)=-\frac{(n-1)^{2}+\lambda^{2}}{4}\widehat{f}(\lambda,\zeta).
\]
Therefore, in analogy with the Euclidean setting, we define the fractional
Laplacian on hyperbolic space as follows:
\begin{equation}\label{2.6}
\widehat{(-\Delta_{\mathbb{H}})^{\gamma}f}(\lambda,\zeta)=\left(\frac{(n-1)^{2}+\lambda^{2}}{4}\right)^{\gamma}\widehat{f}(\lambda,\zeta),\;\;\gamma\in \mathbb{R}.
\end{equation}
For more information about fractional
Laplacian on hyperbolic space, we refer to \cite{an,ba}.

\section{Sharp Estimates of Green's function}
In what follows, $a\lesssim b$ will stand for $a\leq C b$  and $a\sim b$ will stand for $C^{-1}b\leq a\leq C b$ with a positive constant $C$.

Let $n\geq2$. Denote by $e^{t\Delta_{\mathbb{H}}}$ the heat kernel on $\mathbb{B}^{n}$. It is well known that $e^{t\Delta_{\mathbb{H}}}$ depends only on $t$ and $\rho(x,y)$. In fact, $e^{t\Delta_{\mathbb{H}}}$  is given explicitly by the following formulas (see e.g. \cite{d,gn}):

\begin{itemize}
  \item If $n=2m$, then
  \begin{equation}\label{3.1}
\begin{split}
e^{t\Delta_{\mathbb{H}}}=&(2\pi)^{-\frac{n+1}{2}}t^{-\frac{1}{2}}e^{-\frac{(n-1)^{2}}{4}t}
\int^{+\infty}_{\rho}\frac{\sinh r}{\sqrt{\cosh r-\cosh\rho}}\left(-\frac{1}{\sinh r}\frac{\partial}{\partial r}\right)^{m}e^{-\frac{r^{2}}{4t}}dr\\
=&\frac{1}{2(2\pi)^{\frac{n+1}{2}}}t^{-\frac{3}{2}}\int^{+\infty}_{\rho}\frac{\sinh r}{\sqrt{\cosh r-\cosh\rho}}\left(-\frac{1}{\sinh r}\frac{\partial}{\partial r}\right)^{m-1}\left(\frac{r}{\sinh r} e^{-\frac{r^{2}}{4t}}\right)dr;
\end{split}
\end{equation}
  \item If $n=2m+1$, then
   \begin{equation}\label{3.2}
  \begin{split}
e^{t\Delta_{\mathbb{H}}}=&2^{-m-1}\pi^{-m-1/2}t^{-\frac{1}{2}}e^{-\frac{(n-1)^{2}}{4}t}\left(-\frac{1}{\sinh \rho}\frac{\partial}{\partial \rho}\right)^{m}e^{-\frac{\rho^{2}}{4t}}\\
=&2^{-m-2}\pi^{-m-1/2}t^{-\frac{3}{2}}e^{-\frac{(n-1)^{2}}{4}t}\left(-\frac{1}{\sinh \rho}\frac{\partial}{\partial \rho}\right)^{m-1}\left(\frac{\rho}{\sinh \rho} e^{-\frac{\rho^{2}}{4t}}\right).
\end{split}
\end{equation}

\end{itemize}

  An explicit expression of Green's function $(-\Delta_{\mathbb{H}}+\lambda)^{-1}$ with $\lambda>-\frac{(n-1)^{2}}{4}$ is given by (see \cite{li,mat})
\begin{equation}\label{3.3}
\begin{split}
(\lambda-\Delta_{\mathbb{H}})^{-1}
=&(2\pi)^{-\frac{n}{2}}(\sinh\rho)^{-\frac{n-2}{2}}e^{-\frac{(n-2)\pi}{2}i}Q^{\frac{n-2}{2}}_{\theta_{n}(\lambda)}(\cosh\rho), \;\; n\geq3,
\end{split}
\end{equation}
where
\begin{equation}\label{3.4}
\begin{split}
\theta_{n}(\lambda)=\sqrt{\lambda+\frac{(n-1)^{2}}{4}}-\frac{1}{2}
\end{split}
\end{equation}
and $Q^{\frac{n-2}{2}}_{\theta_{n}(\lambda)}(\cosh\rho)$ is the Legendre function of second type and satisfies (\cite{er}, Page 155 )
\begin{equation*}
\begin{split}
e^{-i(\pi\gamma)}Q^{\frac{n-2}{2}}_{\theta_{n}(\lambda)}(\cosh\rho)
=&\frac{\Gamma(\frac{n}{2}+\theta_{n}(\lambda))}{2^{\theta_{n}(\lambda)+1}
\Gamma(\theta_{n}(\lambda)+1)\sinh^{\frac{n-2}{2}}\rho}\int^{\pi}_{0}(\cosh\rho+\cos t)^{\frac{n-4}{2}-\theta_{n}(\lambda)}
(\sin t)^{2\theta_{n}(\lambda)+1}dt.
\end{split}
\end{equation*}
Therefore, for $n\geq3$,
\begin{equation}\label{3.5}
\begin{split}
(\lambda-\Delta_{\mathbb{H}})^{-1}
=&\frac{A_{n}}{(\sinh\rho)^{n-2}}\int^{\pi}_{0}(\cosh\rho+\cos t)^{\frac{n-4}{2}-\theta_{n}(\lambda)}
(\sin t)^{2\theta_{n}(\lambda)+1}dt,
\end{split}
\end{equation}
where
$$A_{n}=(2\pi)^{-\frac{n}{2}}\frac{\Gamma(\frac{n}{2}+\theta_{n}(\lambda))}{2^{\theta_{n}(\lambda)+1}
\Gamma(\theta_{n}(\lambda)+1)}.$$

\begin{lemma} Let $n\geq3$.
There holds, for $\lambda>-\frac{(n-1)^{2}}{4}$ and $\rho>0$,
\begin{equation}\label{3.6}
\begin{split}
(\lambda-\Delta_{\mathbb{H}})^{-1}
\lesssim \left(\frac{1}{\sinh\frac{\rho}{2}}\right)^{n-2}
\left(\frac{1}{\cosh\frac{\rho}{2}}\right)^{1+2\sqrt{\lambda+\frac{(n-1)^{2}}{4}}}.
\end{split}
\end{equation}
\end{lemma}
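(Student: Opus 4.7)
The plan is to work directly from the explicit formula (3.5) and isolate the $\rho$-dependence by a trigonometric substitution. Write $\theta = \theta_n(\lambda) = \sqrt{\lambda + (n-1)^2/4} - \tfrac12$ and $\alpha = \tfrac{n-4}{2} - \theta$, so that $2\theta + 2 = 1 + 2\sqrt{\lambda + (n-1)^2/4}$. Using $\sinh\rho = 2\sinh\tfrac{\rho}{2}\cosh\tfrac{\rho}{2}$, the target bound is equivalent to
$$
\int_0^\pi (\cosh\rho + \cos t)^{\alpha}(\sin t)^{2\theta + 1}\,dt \;\lesssim\; \cosh^{2\alpha}\tfrac{\rho}{2}.
$$

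The key identity is $\cosh\rho + \cos t = 2\cosh^2\tfrac{\rho}{2} - 2\sin^2\tfrac{t}{2}$. Substituting $w = \sin\tfrac{t}{2}$ turns the integral (up to a harmless constant) into
$$
\int_0^1 (\cosh^2\tfrac{\rho}{2} - w^2)^{\alpha}\, w^{2\theta+1}(1 - w^2)^{\theta}\,dw
= \cosh^{2\alpha}\tfrac{\rho}{2} \cdot K(\rho),
$$
where
$$
K(\rho) = \int_0^1 \Bigl(1 - \tfrac{w^2}{\cosh^2(\rho/2)}\Bigr)^{\alpha} w^{2\theta+1}(1-w^2)^{\theta}\,dw.
$$
So it suffices to bound $K(\rho)$ uniformly for $\rho > 0$. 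Note that $\theta > -\tfrac12$ and $2\theta + 1 > 0$ under the hypothesis $\lambda > -(n-1)^2/4$.

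I would finish by splitting into two cases according to the sign of $\alpha$. If $\alpha \geq 0$, then $0 \leq 1 - w^2/\cosh^2\tfrac{\rho}{2} \leq 1$ gives $K(\rho) \leq \int_0^1 w^{2\theta+1}(1-w^2)^\theta\,dw < \infty$. If $\alpha < 0$, then $\cosh\tfrac{\rho}{2} \geq 1$ yields $1 - w^2/\cosh^2\tfrac{\rho}{2} \geq 1 - w^2$, and raising to the negative power $\alpha$ reverses the inequality, so
$$
K(\rho) \leq \int_0^1 w^{2\theta + 1}(1-w^2)^{\alpha + \theta}\,dw = \int_0^1 w^{2\theta + 1}(1-w^2)^{(n-4)/2}\,dw < \infty
$$
because $n \geq 3$ makes $(n-4)/2 > -1$. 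Both cases give $K(\rho) \lesssim 1$, completing the proof.

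The main obstacle is choosing a substitution that cleanly separates out the factor $\cosh^{2\alpha}(\rho/2)$ needed to match the target exponent on $\cosh\tfrac{\rho}{2}$; once that is done, the uniform boundedness of the residual integral $K(\rho)$ is only a matter of checking the two sign cases for $\alpha$ against standard Beta-integral convergence criteria. Note in particular that the bound is uniform in $\rho$ but the implicit constant does depend on $\lambda$ through $\theta$, which is consistent with the formulation of the lemma.
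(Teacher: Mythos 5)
Your proof is correct. It follows the same basic strategy as the paper -- start from the explicit formula (3.5), pull the dominant $\cosh$ power out of the $t$-integral, and show the residual integral is bounded uniformly in $\rho$ -- but the implementation of the key uniformity step is genuinely different. The paper writes the integral as $(\cosh\rho)^{\frac{n-4}{2}-\theta}f\left(\frac{1}{\cosh\rho}\right)$ with $f(s)=\int_0^\pi(1+s\cos t)^{\frac{n-4}{2}-\theta}(\sin t)^{2\theta+1}dt$, and deduces boundedness of $f$ on $[0,1)$ from its continuity together with finiteness of the limit as $s\to 1^-$ (a soft compactness argument, which when the exponent $\frac{n-4}{2}-\theta$ is negative implicitly requires a dominated-convergence justification the paper does not spell out); it then converts $\cosh\rho$ into $\cosh^2\frac{\rho}{2}$ via the equivalence $\cosh\rho\sim\cosh^2\frac{\rho}{2}$. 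You instead use the exact half-angle identity $\cosh\rho+\cos t=2\left(\cosh^2\frac{\rho}{2}-\sin^2\frac{t}{2}\right)$ and the substitution $w=\sin\frac{t}{2}$ to extract the factor $\cosh^{2\alpha}\frac{\rho}{2}$ exactly, and then bound the residual $K(\rho)$ by explicit pointwise domination, split according to the sign of $\alpha$, against convergent Beta integrals (using $2\theta+1>0$ from $\lambda>-\frac{(n-1)^2}{4}$ and $\alpha+\theta=\frac{n-4}{2}>-1$ from $n\geq 3$). Your route is more quantitative: the constant is an explicit Beta integral, the endpoint subtlety at $s=1$ never arises, and no asymptotic equivalence between $\cosh\rho$ and $\cosh^2\frac{\rho}{2}$ is needed.
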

\begin{proof}
Set,  for $0\leq\alpha<1$,
\[
f(\alpha)=\int^{\pi}_{0}(1+\alpha\cos t)^{\frac{n-4}{2}-\theta_{n}(\lambda)}
(\sin t)^{2\theta_{n}(\lambda)+1}dt.
\]
Then $f$ is a continuous function on $[0,1)$ and
\begin{equation*}
\begin{split}
\lim_{\alpha\rightarrow1-}f(\alpha)=&\int^{\pi}_{0}(1+\cos t)^{\frac{n-4}{2}-\theta_{n}(\lambda)}
(\sin t)^{2\theta_{n}(\lambda)+1}dt\\
=&\int^{\pi}_{0}\left(2\sin^{2} \frac{t}{2}\right)^{\frac{n-4}{2}-\theta_{n}(\lambda)}
\left(2\sin \frac{t}{2}\cos\frac{t}{2}\right)^{2\theta_{n}(\lambda)+1}dt\\
=&2^{\frac{n-2}{2}+\theta_{n}(\lambda)+1}\int^{\pi}_{0}\left(\sin \frac{t}{2}\right)^{n-3}
\left(\cos\frac{t}{2}\right)^{2\theta_{n}(\lambda)+1}dt\\
=&2^{\frac{n-2}{2}+\theta_{n}(\lambda)+1}\int^{\pi}_{0}\left(\sin \frac{t}{2}\right)^{n-3}
\left(\cos\frac{t}{2}\right)^{2\sqrt{\lambda+\frac{(n-1)^{2}}{4}}}dt\\
\leq&2^{\frac{n-2}{2}+\theta_{n}(\lambda)+1}\pi.
\end{split}
\end{equation*}
Therefore,  there exists a constant $C>0$ such that for each $\alpha\in[0,1)$,
$|f(\alpha)|=f(\alpha)\leq C$. Thus, by (\ref{3.5}),
\begin{equation*}
\begin{split}
(\lambda-\Delta_{\mathbb{H}})^{-1}=&A_{n}\frac{(\cosh \rho)^{\frac{n-4}{2}-\theta_{n}(\lambda)}}{(\sinh\rho)^{n-2}}f\left(\frac{1}{\cosh\rho}\right)
\\
\lesssim& \frac{(\cosh \rho)^{\frac{n-4}{2}-\theta_{n}(\lambda)}}{(\sinh\rho)^{n-2}}= \frac{(\cosh \rho)^{\frac{n-4}{2}-\theta_{n}(\lambda)}}{(2\sinh\frac{\rho}{2}\cosh\frac{\rho}{2})^{n-2}}\\
\sim&\left(\frac{1}{\sinh\frac{\rho}{2}}\right)^{n-2}
\left(\frac{1}{\cosh\frac{\rho}{2}}\right)^{2+2\theta_{n}(\lambda)}\\
=&\left(\frac{1}{\sinh\frac{\rho}{2}}\right)^{n-2}
\left(\frac{1}{\cosh\frac{\rho}{2}}\right)^{1+2\sqrt{\lambda+\frac{(n-1)^{2}}{4}}}.
\end{split}
\end{equation*}
To get the last inequality, we use the fact  $\cosh\rho\sim\cosh^{2}\frac{\rho}{2}$, $\rho\geq0$.
\end{proof}

\medskip

Next we shall give the estimates of limiting case of Green's function, namely $\lambda=\frac{(n-1)^{2}}{4}$.
We compute, by (\ref{3.1}) and (\ref{3.2}),
\begin{itemize}
  \item If $n=2m$, then
\begin{equation}\label{3.7}
\begin{split}
&\left(-\Delta_{\mathbb{H}}-\frac{(n-1)^{2}}{4}\right)^{-1}=\int^{\infty}_{0}e^{t(\Delta_{\mathbb{H}}+\frac{(n-1)^{2}}{4})}dt\\
=&\frac{1}{2(2\pi)^{\frac{n+1}{2}}}\int^{+\infty}_{\rho}\frac{\sinh r}{\sqrt{\cosh r-\cosh\rho}}\left(-\frac{1}{\sinh r}\frac{\partial}{\partial r}\right)^{m-1}\left(\frac{r}{\sinh r}\int^{\infty}_{0}t^{-\frac{3}{2}}e^{-\frac{r^{2}}{4t}}dt\right)dr\\
=&\frac{\sqrt{\pi}}{(2\pi)^{\frac{n+1}{2}}}\int^{+\infty}_{\rho}\frac{\sinh r}{\sqrt{\cosh r-\cosh\rho}}\left(-\frac{1}{\sinh r}\frac{\partial}{\partial r}\right)^{m-1}\frac{1}{\sinh r}dr
\end{split}
\end{equation}
To get the last equation, we use the fact
\begin{equation}\label{bb3.7}
\int^{\infty}_{0}t^{-\frac{3}{2}}e^{-\frac{r^{2}}{4t}}dt=\frac{2}{r}\int^{\infty}_{0}t^{-\frac{3}{2}}e^{-\frac{1}{t}}dt=\frac{2}{r}
\int^{\infty}_{0}t^{-\frac{1}{2}}e^{-t}dt=\frac{2}{r}\Gamma(1/2)=\frac{2\sqrt{\pi}}{r}.
\end{equation}

  \item If $n=2m+1$, then
  \begin{equation}\label{3.8}
\begin{split}
&\left(-\Delta_{\mathbb{H}}-\frac{(n-1)^{2}}{4}\right)^{-1}=\int^{\infty}_{0}e^{t(\Delta_{\mathbb{H}}+\frac{(n-1)^{2}}{4})}dt\\
=&2^{-m-2}\pi^{-m-1/2}\left(-\frac{1}{\sinh \rho}\frac{\partial}{\partial \rho}\right)^{m-1}\left(\frac{\rho}{\sinh \rho} \int^{\infty}_{0}t^{-\frac{3}{2}}e^{-\frac{\rho^{2}}{4t}}dt\right)\\
=&2^{-m-1}\pi^{-m}\left(-\frac{1}{\sinh \rho}\frac{\partial}{\partial \rho}\right)^{m-1}\frac{1}{\sinh \rho}.
\end{split}
\end{equation}
To get the last equation, we also use (\ref{bb3.7}).
\end{itemize}

\begin{lemma}
Let $k$ be a nonnegative integer. Then there exist  constants $\{a_{i}\}^{k}_{i=0}$ such that
\begin{equation}\label{3.9}
\left(-\frac{1}{\sinh \rho}\frac{\partial}{\partial \rho}\right)^{2k}\frac{1}{\sinh \rho}=\sum^{k}_{i=0}a_{i}\left(\frac{1}{\sinh \rho}\right)^{2i+2k+1}.
\end{equation}
Moreover, $a_{0}=(2k)!$.
\end{lemma}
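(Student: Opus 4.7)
My plan is to make the change of variable $u=\cosh\rho$, which converts the operator $D:=-\frac{1}{\sinh\rho}\frac{\partial}{\partial\rho}$ into $D=-\frac{d}{du}$ and replaces $\frac{1}{\sinh^{m}\rho}$ by $f_{m}(u):=(u^{2}-1)^{-m/2}$. Since the exponent $2k$ is even, the sign cancels and the lemma becomes the assertion that
\[
\frac{d^{2k}}{du^{2k}}f_{1}(u)=\sum_{i=0}^{k}a_{i}\,f_{2i+2k+1}(u),\qquad a_{0}=(2k)!.
\]

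The whole argument rests on one elementary identity. A direct differentiation together with the substitution $u^{2}=(u^{2}-1)+1$ gives
\[
f_{m}''(u)=m(m+1)\,f_{m+2}(u)+m(m+2)\,f_{m+4}(u).
\]
This recursion preserves the parity of $m$, so iterating $\frac{d^{2}}{du^{2}}$ starting from $f_{1}$ produces only the odd-integer exponents $2j+1$ that appear in the stated formula.

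I then induct on $k$. The base case $k=0$ is immediate with $a_{0}=0!=1$. Assuming the formula at level $k$ with leading coefficient $a_{0}^{(k)}=(2k)!$, I apply $\frac{d^{2}}{du^{2}}$ term by term; by the recursion, each $f_{2i+2k+1}$ contributes a multiple of $f_{2i+2k+3}$ and a multiple of $f_{2i+2k+5}$. Reindexing yields a sum of exactly the prescribed form with exponents $2j+2(k+1)+1$ for $j=0,\dots,k+1$. The lowest-exponent term $f_{2(k+1)+1}$ is produced only by the $i=0$ summand through the first branch of the recursion, with coefficient
\[
a_{0}^{(k)}\cdot(2k+1)(2k+2)=(2k)!\cdot(2k+1)(2k+2)=(2k+2)!,
\]
so $a_{0}^{(k+1)}=(2(k+1))!$ and the induction closes.

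I do not expect a real obstacle: after the change of variable everything reduces to one second-derivative identity plus straightforward bookkeeping. The only point requiring care is the algebraic simplification that produces the two clean branches $m(m+1)$ and $m(m+2)$, where the splitting $u^{2}=(u^{2}-1)+1$ is essential; the rest is routine induction in which the lowest-power coefficient is tracked along one branch only.
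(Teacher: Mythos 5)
Your proof is correct and is essentially the paper's own argument: the substitution $u=\cosh\rho$ merely rewrites the operator $-\frac{1}{\sinh\rho}\frac{\partial}{\partial\rho}$ as $-\frac{d}{du}$, and your key recursion $f_m''=m(m+1)f_{m+2}+m(m+2)f_{m+4}$, obtained by splitting $u^{2}=(u^{2}-1)+1$, is exactly the paper's computation using $\cosh^{2}\rho=1+\sinh^{2}\rho$, with the identical coefficient branches $(2i+2k+1)(2i+2k+2)$ and $(2i+2k+1)(2i+2k+3)$. The induction structure, including tracking the lowest-power coefficient via $a_{0}\mapsto(2k+1)(2k+2)a_{0}=(2k+2)!$, also coincides with the paper's.
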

\begin{proof}
We shall prove  by induction.
It is easy to see that (\ref{3.9}) is valid for $k=0$. Now suppose that equation (\ref{3.9}) is valid for $k=l$,
i.e.
\begin{equation*}
\left(-\frac{1}{\sinh \rho}\frac{\partial}{\partial \rho}\right)^{2l}\frac{1}{\sinh \rho}=\sum^{l}_{i=0}a_{i}\left(\frac{1}{\sinh \rho}\right)^{2i+2l+1}
\end{equation*}
and $a_{0}=(2l)!$. Then
 \begin{equation*}
\begin{split}
&\left(-\frac{1}{\sinh \rho}\frac{\partial}{\partial \rho}\right)^{2l+2}\frac{1}{\sinh \rho}
\\
=&\left(-\frac{1}{\sinh \rho}\frac{\partial}{\partial \rho}\right)^{2}\sum^{l}_{i=0}a_{i}\left(\frac{1}{\sinh \rho}\right)^{2i+2l+1}\\
=&\sum^{l}_{i=0}(2i+2l+1)a_{i}\left[-\left(\frac{1}{\sinh \rho}\right)^{2i+2l+3}+(2i+2l+3)\left(\frac{1}{\sinh \rho}\right)^{2i+2l+5}\cosh^{2} \rho\right]\\
=&\sum^{l}_{i=0}(2i+2l+1)a_{i}\left[-\left(\frac{1}{\sinh \rho}\right)^{2i+2l+3}+(2i+2l+3)\left(\frac{1}{\sinh \rho}\right)^{2i+2l+5}(1+\sinh^{2} \rho)\right]\\
=&\sum^{l}_{i=0}(2i+2l+1)a_{i}\left[(2i+2l+2)\left(\frac{1}{\sinh \rho}\right)^{2i+2l+3}+(2i+2l+3)\left(\frac{1}{\sinh \rho}\right)^{2i+2l+5}\right]\\
=&\sum^{l+1}_{i=0}a'_{i}\left(\frac{1}{\sinh \rho}\right)^{2i+2l+3},
\end{split}
\end{equation*}
where
 \begin{equation*}
\begin{split}
a'_{0}=&a_{0}(2l+1)(2l+2)=(2l+2)!;\\
a'_{i}=&(2i+2l+1)(2i+2l+2)a_{i}+(2i+2l-1)(2i+2l+1)a_{i-1},\;\; i=1,2,\cdots,l;\\
a'_{l+1}=&(4l+1)(4l+3)a_{l}.
\end{split}
\end{equation*}
The desired result follows.
\end{proof}

\begin{lemma}
Let $m$ be a nonnegative integer. There holds, for $\rho>0$,
\begin{equation}\label{3.10}
\left|\left(-\frac{1}{\sinh \rho}\frac{\partial}{\partial \rho}\right)^{m}\frac{1}{\sinh \rho}\right|\lesssim \left(\frac{1}{\sinh\frac{\rho}{2}}\right)^{2m+1}
\frac{1}{\cosh\frac{\rho}{2}}.
\end{equation}
\end{lemma}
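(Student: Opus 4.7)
The plan is to split according to the parity of $m$ and reduce everything to the explicit polynomial-in-$1/\sinh\rho$ formula supplied by Lemma 3.2. For $m=2k$ even, Lemma 3.2 gives directly
\[
\left(-\frac{1}{\sinh\rho}\frac{\partial}{\partial\rho}\right)^{2k}\frac{1}{\sinh\rho}=\sum_{i=0}^{k}a_{i}\left(\frac{1}{\sinh\rho}\right)^{2i+2k+1}.
\]
For $m=2k+1$ odd, I would apply one additional factor of $-\tfrac{1}{\sinh\rho}\tfrac{\partial}{\partial\rho}$ to this expression; using $\tfrac{\partial}{\partial\rho}(1/\sinh\rho)^{s}=-s\cosh\rho\,(1/\sinh\rho)^{s+1}$, one obtains
\[
\left(-\frac{1}{\sinh\rho}\frac{\partial}{\partial\rho}\right)^{2k+1}\frac{1}{\sinh\rho}=\sum_{i=0}^{k}b_{i}\,\frac{\cosh\rho}{\sinh^{2i+2k+3}\rho},
\]
for suitable constants $b_{i}$. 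So in both parities the left side is a finite linear combination of elementary terms, and it suffices to estimate each term individually.

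For the termwise estimate I would use the double-angle identities $\sinh\rho=2\sinh\tfrac{\rho}{2}\cosh\tfrac{\rho}{2}$ and $\cosh\rho=\cosh^{2}\tfrac{\rho}{2}+\sinh^{2}\tfrac{\rho}{2}\le 2\cosh^{2}\tfrac{\rho}{2}$. In the even case the generic term becomes
\[
\Bigl(\tfrac{1}{\sinh\rho}\Bigr)^{2i+2k+1}=2^{-(2i+2k+1)}\Bigl(\tfrac{1}{\sinh(\rho/2)}\Bigr)^{2i+2k+1}\Bigl(\tfrac{1}{\cosh(\rho/2)}\Bigr)^{2i+2k+1},
\]
and comparing with the target $(1/\sinh\tfrac{\rho}{2})^{2m+1}(1/\cosh\tfrac{\rho}{2})=(1/\sinh\tfrac{\rho}{2})^{4k+1}(1/\cosh\tfrac{\rho}{2})$ reduces matters to showing
\[
\Bigl(\tanh\tfrac{\rho}{2}\Bigr)^{2(k-i)}\Bigl(\tfrac{1}{\cosh(\rho/2)}\Bigr)^{4i}\le 1,
\]
which is immediate since $0\le\tanh\tfrac{\rho}{2}<1$ and $\cosh\tfrac{\rho}{2}\ge 1$, and since $0\le i\le k$. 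The odd case is essentially identical after absorbing the extra $\cosh\rho\le 2\cosh^{2}\tfrac{\rho}{2}$ into the power of $\cosh\tfrac{\rho}{2}$, and one finds the same reduction to $(\tanh\tfrac{\rho}{2})^{2(k-i)}(1/\cosh\tfrac{\rho}{2})^{4i}\le 1$. Summing over the finitely many $i\in\{0,\dots,k\}$ with the implicit constant $\max_{i}|a_{i}|$ or $\max_{i}|b_{i}|$ yields the claimed bound.

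There is no substantive obstacle: the content of the lemma is essentially bookkeeping once Lemma 3.2 is in hand. The only point requiring care is that the extracted powers of $\sinh\tfrac{\rho}{2}$ and $\cosh\tfrac{\rho}{2}$ match the target exponents $2m+1$ and $1$ respectively, which is why both parities must be treated, and why the sign of $2k-2i$ (non-negative on $\sinh\tfrac{\rho}{2}$ in the reduced form) matters for the smallness of the ratio uniformly in $\rho>0$.
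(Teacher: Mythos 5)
Your proposal is correct and follows essentially the same route as the paper: split by parity of $m$, invoke Lemma 3.2 directly in the even case, apply one extra factor of $-\frac{1}{\sinh\rho}\frac{\partial}{\partial\rho}$ termwise in the odd case (the paper computes the same constants $b_i=(2i+2k+1)a_i$), and then convert via the double-angle identities $\sinh\rho=2\sinh\frac{\rho}{2}\cosh\frac{\rho}{2}$ and $\cosh\rho\sim\cosh^{2}\frac{\rho}{2}$. The only cosmetic difference is that you bound each summand's ratio to the target by $\bigl(\tanh\frac{\rho}{2}\bigr)^{2(k-i)}\bigl(1/\cosh\frac{\rho}{2}\bigr)^{4i}\le 1$ individually, whereas the paper first dominates the sum by its two extreme terms $i=0$ and $i=k$ before converting; both are the same bookkeeping.
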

\begin{proof} If $m$ is even, namely $m=2k$ for
some nonnegative integer $k$, then by Lemma 3.2,
\begin{equation*}
\begin{split}
\left|\left(-\frac{1}{\sinh \rho}\frac{\partial}{\partial \rho}\right)^{m}\frac{1}{\sinh \rho}\right|=&\left|\left(-\frac{1}{\sinh \rho}\frac{\partial}{\partial \rho}\right)^{2k}\frac{1}{\sinh \rho}\right|\\
\lesssim&
\sum^{k}_{i=0}\left(\frac{1}{\sinh \rho}\right)^{2i+2k+1}\\
\lesssim&\left(\frac{1}{\sinh \rho}\right)^{4k+1}+\left(\frac{1}{\sinh \rho}\right)^{2k+1}\\
\sim&\left(\frac{1}{\sinh\frac{\rho}{2}}\right)^{4k+1}\frac{1}{\cosh\frac{\rho}{2}}\left[
\left(\frac{1}{\cosh\frac{\rho}{2}}\right)^{4k}+
\frac{\sinh^{2k}\frac{\rho}{2}}{\cosh^{2k}\frac{\rho}{2}}\right]\\
\lesssim&
\left(\frac{1}{\sinh\frac{\rho}{2}}\right)^{4k+1}\frac{1}{\cosh\frac{\rho}{2}}.
\end{split}
\end{equation*}

If $m$ is odd, namely $m=2k+1$ for
some nonnegative integer $k$. Also by Lemma 3.2,
\begin{equation}\label{3.12}
\begin{split}
&\left|\left(-\frac{1}{\sinh \rho}\frac{\partial}{\partial \rho}\right)^{m}\frac{1}{\sinh \rho}\right|=\left|\left(-\frac{1}{\sinh \rho}\frac{\partial}{\partial \rho}\right)^{2k+1}\frac{1}{\sinh \rho}\right|\\
=&\left|\sum^{k}_{i=0}a_{i}(2i+2k+1)\left(\frac{1}{\sinh \rho}\right)^{2i+2k+3}\cosh\rho\right|\\
\lesssim&
\sum^{k}_{i=0}\left(\frac{1}{\sinh \rho}\right)^{2i+2k+3}\cosh\rho\\
\lesssim&\left(\frac{1}{\sinh \rho}\right)^{4k+3}\cosh\rho+\left(\frac{1}{\sinh \rho}\right)^{2k+3}\cosh\rho\\
\sim&\left(\frac{1}{\sinh\frac{\rho}{2}}\right)^{4k+3}\frac{1}{\cosh\frac{\rho}{2}}\left[
\frac{\cosh\rho}{\cosh^{4k+2}\frac{\rho}{2}}+
\frac{\sinh^{2k}\frac{\rho}{2}}{\cosh^{2k+2}\frac{\rho}{2}}\cosh\rho\right].
\end{split}
\end{equation}
Notice that $\cosh\rho\sim\cosh^{2}\frac{\rho}{2}$, $\rho\geq0$.
We have, by (\ref{3.12}),
\begin{equation*}
\begin{split}
\left|\left(-\frac{1}{\sinh \rho}\frac{\partial}{\partial \rho}\right)^{m}\frac{1}{\sinh \rho}\right|
\lesssim&\left(\frac{1}{\sinh\frac{\rho}{2}}\right)^{4k+3}\frac{1}{\cosh\frac{\rho}{2}}.
\end{split}
\end{equation*}
These complete the proof.
\end{proof}

\begin{lemma} Let $n\geq3$.
There holds, for $\rho>0$,
\begin{equation}\label{3.13}
\left(-\Delta_{\mathbb{H}}-\frac{(n-1)^{2}}{4}\right)^{-1}\lesssim \left(\frac{1}{\sinh\frac{\rho}{2}}\right)^{n-2}
\frac{1}{\cosh\frac{\rho}{2}}.
\end{equation}
\end{lemma}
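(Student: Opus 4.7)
The plan is to handle the two parities of $n$ separately using the explicit formulas (\ref{3.7}) and (\ref{3.8}) derived just above, combined with the pointwise derivative bound in Lemma 3.3. The odd case follows by direct substitution; the even case requires an auxiliary one-dimensional integral estimate, which is the only technical step.

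\medskip\noindent\textbf{Odd case ($n=2m+1$, $m\geq 1$).} Formula (\ref{3.8}) expresses the Green's function as a constant times $\bigl(-\frac{1}{\sinh\rho}\frac{\partial}{\partial\rho}\bigr)^{m-1}\frac{1}{\sinh\rho}$. Applying Lemma 3.3 with exponent $m-1$ gives a bound of order $(\sinh\tfrac{\rho}{2})^{-(2(m-1)+1)}(\cosh\tfrac{\rho}{2})^{-1} = (\sinh\tfrac{\rho}{2})^{-(n-2)}(\cosh\tfrac{\rho}{2})^{-1}$, which is exactly (\ref{3.13}).

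\medskip\noindent\textbf{Even case ($n=2m$, $m\geq 2$).} Starting from (\ref{3.7}) and applying Lemma 3.3 to the inner derivative yields
\[
\Bigl|\bigl(-\Delta_{\mathbb H}-\tfrac{(n-1)^{2}}{4}\bigr)^{-1}\Bigr|
\lesssim \int_{\rho}^{+\infty}\frac{\sinh r}{\sqrt{\cosh r-\cosh\rho}}\,\Bigl(\frac{1}{\sinh\frac{r}{2}}\Bigr)^{2m-1}\frac{1}{\cosh\frac{r}{2}}\,dr.
\]
Using $\sinh r=2\sinh\tfrac{r}{2}\cosh\tfrac{r}{2}$ and $\cosh r-\cosh\rho=2(\sinh^{2}\tfrac{r}{2}-\sinh^{2}\tfrac{\rho}{2})$, the integrand simplifies, and the substitution $u=\sinh\tfrac{r}{2}$, $dr=\tfrac{2\,du}{\sqrt{1+u^{2}}}$, followed by $u=a\sec\theta$ with $a=\sinh\tfrac{\rho}{2}$, converts the right-hand side (up to a multiplicative constant) to
\[
\frac{1}{a^{\,n-2}}\int_{0}^{\pi/2}\frac{\cos^{n-3}\theta}{\sqrt{1+a^{2}\sec^{2}\theta}}\,d\theta.
\]
The key observation is the elementary bound $\sqrt{1+a^{2}\sec^{2}\theta}\geq\sqrt{1+a^{2}}=\cosh\tfrac{\rho}{2}$, which pulls the $\cosh$-factor out and leaves a $\theta$-integral that is finite (since $n-3\geq 1$ when $m\geq 2$). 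Recalling $a=\sinh\tfrac{\rho}{2}$ then yields (\ref{3.13}).

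\medskip\noindent\textbf{Main obstacle.} The only nontrivial point is the integral estimate in the even case: Lemma 3.3 alone is not enough, since one must integrate a singular kernel against the derivative bound and extract precisely the factor $1/\cosh\tfrac{\rho}{2}$ uniformly in $\rho$. The trigonometric substitution $u=a\sec\theta$ is what makes this work cleanly, because it uniformly lower-bounds $\sqrt{1+a^{2}\sec^{2}\theta}$ by $\cosh\tfrac{\rho}{2}$ without losing integrability of the remaining $\theta$-integral.
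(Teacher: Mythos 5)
Your proposal is correct and follows essentially the same route as the paper: the odd case is identical (formula (\ref{3.8}) plus Lemma 3.3), and in the even case your ``key observation'' $\sqrt{1+a^{2}\sec^{2}\theta}\geq\sqrt{1+a^{2}}=\cosh\frac{\rho}{2}$ is precisely the paper's step of bounding $\cosh\frac{r}{2}\geq\cosh\frac{\rho}{2}$ on the integration domain, merely expressed after your change of variables rather than before. Even the residual integrals agree: the paper's $\int_{0}^{\infty}(1+s^{2})^{-(2m-1)/2}\,ds$ becomes your $\int_{0}^{\pi/2}\cos^{n-3}\theta\,d\theta$ under $s=\tan\theta$.
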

\begin{proof}
If  $n$ is even, namely $n=2m$ for
some positive integer $m\geq2$.  Then by (\ref{3.7}) and Lemma 3.3,
\begin{equation*}
\begin{split}
\left(-\Delta_{\mathbb{H}}-\frac{(n-1)^{2}}{4}\right)^{-1}\lesssim&
\int^{+\infty}_{\rho}\frac{\sinh r}{\sqrt{\cosh r-\cosh\rho}}\left(\frac{1}{\sinh\frac{r}{2}}\right)^{2m-1}
\frac{1}{\cosh\frac{r}{2}}dr\\
\leq&\frac{1}{\cosh\frac{\rho}{2}}\int^{+\infty}_{\rho}\frac{\sinh r}{\sqrt{\cosh r-\cosh\rho}}\left(\frac{1}{\sinh\frac{r}{2}}\right)^{2m-1}dr.
\end{split}
\end{equation*}
Using the substitution $t=\sqrt{\cosh r-\cosh\rho}/\sqrt{2}=\sqrt{\sinh^{2} \frac{r}{2}-\sinh^{2}\frac{\rho}{2}}$, we have
\begin{equation*}
\begin{split}
\left(-\Delta_{\mathbb{H}}-\frac{(n-1)^{2}}{4}\right)^{-1}\lesssim&\frac{1}{\cosh\frac{\rho}{2}}\int^{+\infty}_{0}
\left(\frac{1}{t^{2}+\sinh^{2}\frac{\rho}{2}}\right)^{\frac{2m-1}{2}}dt\\
=&\frac{1}{\cosh\frac{\rho}{2}}\left(\frac{1}{\sinh\frac{\rho}{2}}\right)^{2m-2}\int^{+\infty}_{0}
\left(\frac{1}{t^{2}+1}\right)^{\frac{2m-1}{2}}dt\\
\sim&\frac{1}{\cosh\frac{\rho}{2}}\left(\frac{1}{\sinh\frac{\rho}{2}}\right)^{n-2}.
\end{split}
\end{equation*}

If  $n$ is odd, namely $n=2m+1$ for
some positive integer $m\geq1$.
Then by (\ref{3.8}) and  Lemma 3.3,
\begin{equation*}
\begin{split}
\left(-\Delta_{\mathbb{H}}-\frac{(n-1)^{2}}{4}\right)^{-1}\sim&\left(-\frac{1}{\sinh \rho}\frac{\partial}{\partial \rho}\right)^{m-1}\frac{1}{\sinh \rho}
\lesssim\left(\frac{1}{\sinh\frac{\rho}{2}}\right)^{n-2}\frac{1}{\cosh\frac{\rho}{2}}.
\end{split}
\end{equation*}
The desired result follows.
\end{proof}

\section{Proofs of Theorem 1.1 and Theorem 1.2}
Firstly, we recall the sharp Hardy-Littlewood-Sobolev inequality on $\mathbb{R}^{n}$ (see \cite{lie}).
\begin{theorem}
Let $0<\lambda<n$ and $p=\frac{2n}{2n-\lambda}$. Then for $f,g\in L^{p}(\mathbb{R}^{n})$,
\[
\left|\int_{\mathbb{R}^{n}}\int_{\mathbb{R}^{n}}\frac{f(x)g(y)}{|x-y|^{\lambda}}dxdy\right|\leq C_{n,\lambda}\|f\|_{p}\|g\|_{p}
\]
with equality if and only if
$g=cf=c'(\gamma^{2}+|x-a|^{2})^{-(2n-\lambda)2}, a\in\mathbb{R}^{n}, \gamma\neq0,$ where $C_{n,\lambda}$ is given by (\ref{1.10})
\end{theorem}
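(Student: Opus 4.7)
The statement to be proved is Lieb's sharp Hardy--Littlewood--Sobolev inequality on $\mathbb{R}^n$. My plan is to follow the classical strategy based on rearrangement, conformal invariance and classification of extremizers. First, by the Riesz rearrangement inequality, replacing $f$ and $g$ by their symmetric decreasing rearrangements $f^{\ast}$ and $g^{\ast}$ preserves the $L^p$ norms and does not decrease the double integral, since $|x-y|^{-\lambda}$ is itself symmetric decreasing in $x-y$. This reduces the question to symmetric decreasing functions and, crucially, shows that any hypothetical maximizer may be taken radial.

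The next step exploits the conformal invariance of the functional at the critical exponent $p=2n/(2n-\lambda)$. Under the stereographic projection $\pi:\mathbb{S}^{n}\to\mathbb{R}^{n}$, the chordal distance $|\xi-\eta|$ pulls back to $|x-y|$ times explicit Jacobian factors, and the choice of $p$ is exactly what makes these factors be absorbed by the weights $(2/(1+|x|^2))^{n/p}$ in $\|f\|_{p}$. The Euclidean inequality is therefore equivalent to its spherical version
\[
\left|\int_{\mathbb{S}^{n}}\int_{\mathbb{S}^{n}}\frac{F(\xi)G(\eta)}{|\xi-\eta|^{\lambda}}d\sigma(\xi)d\sigma(\eta)\right|\leq C'_{n,\lambda}\|F\|_{p}\|G\|_{p}.
\]
On the compact manifold $\mathbb{S}^{n}$, existence of a maximizer is obtained from weak compactness of bounded sequences in $L^p$ together with weak continuity of the bilinear functional on bounded sets, once the non-compactness coming from the conformal group has been removed by normalizing a maximizing sequence (after rearrangement) so that its mass is not allowed to concentrate at a single point.

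Once a positive maximizer $F_{\ast}$ is in hand, it satisfies the Euler--Lagrange equation
\[
\int_{\mathbb{S}^{n}}|\xi-\eta|^{-\lambda}F_{\ast}(\eta)^{p-1}d\sigma(\eta)=\mu F_{\ast}(\xi).
\]
Pulling back to $\mathbb{R}^{n}$ gives an integral equation of the form $|x|^{-\lambda}\ast f^{p-1}=\mu f$. The method of moving planes (or the more streamlined method of moving spheres) shows that every positive $L^{p}(\mathbb{R}^{n})$ solution is radially symmetric about some center and, after rescaling, coincides with a conformal translate of $(1+|x|^{2})^{-(2n-\lambda)/2}$. The explicit value of $C_{n,\lambda}$ in \eqref{1.10} then drops out from substituting this extremal into both sides of the inequality and evaluating $\int_{\mathbb{R}^{n}}(1+|x|^{2})^{-s}dx$ by the beta function.

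The main obstacle is the classification step: proving that every positive $L^{p}$ solution of the Euler--Lagrange integral equation has the specific conformal profile. A moving-planes argument requires a careful Kelvin transform together with uniform decay estimates at infinity so that reflections across hyperplanes may be compared; a moving-spheres version produces the conformal symmetry group directly but demands comparable care. Everything else --- rearrangement, conformal transfer to the sphere, existence of a maximizer, and the final constant computation --- is essentially routine once the classification of extremals is secured.
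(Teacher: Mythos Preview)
The paper does not prove this theorem at all: Theorem~4.1 is quoted verbatim from Lieb's 1983 paper and is used as a black box in the proof of Theorem~1.2. So there is no ``paper's own proof'' to compare your proposal against; the authors simply invoke \cite{lie}.

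Your outline is a reasonable synthesis of the standard modern proof, combining Lieb's original ingredients (Riesz rearrangement, stereographic lift to $\mathbb{S}^{n}$, extraction of a maximizer on the compact model) with the later classification of positive solutions of the integral Euler--Lagrange equation via moving planes or moving spheres (Chen--Li--Ou, Li--Zhu). Note that Lieb himself did not use moving planes; his 1983 argument exploited competing symmetries together with the strict rearrangement inequality to identify the extremals directly, so your route is already a genuine variant of the historical proof. The weakest link in your sketch is the existence step: saying one ``normalizes a maximizing sequence so that its mass is not allowed to concentrate at a single point'' hides the only real analytic difficulty, since the conformal group of $\mathbb{S}^{n}$ is noncompact and one must actually show---via a concentration-compactness dichotomy or Lieb's refined Fatou lemma---that after a conformal renormalization no mass escapes. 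Everything else in your plan is sound and would go through.
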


Now we can prove a sharp Hardy-Littlewood-Sobolev inequality on $\mathbb{B}^{n}$.\\

\textbf{Proof of Theorem 1.2}

We have, by (\ref{2.2}),
\begin{equation*}
\begin{split}
&\int_{\mathbb{B}^{n}}\int_{\mathbb{B}^{n}}\frac{f(x)g(y)}{\left(2\sinh\frac{\rho(T_{y}(x))}{2}\right)^{\lambda}}dV_{x}dV_{y}\\
=&\int_{\mathbb{B}^{n}}\int_{\mathbb{B}^{n}}f(x)\left(\frac{2}{1-|x|^{2}}\right)^{n}
\left(\frac{2|x-a|}{\sqrt{(1-|a|^{2})(1-|x|^{2})}}\right)^{-\lambda}g(y)\left(\frac{2}{1-|y|^{2}}\right)^{n}dxdy\\
=&\int_{\mathbb{B}^{n}}\int_{\mathbb{B}^{n}}f(x)\left(\frac{2}{1-|x|^{2}}\right)^{n-\frac{\lambda}{2}}
|x-y|^{-\lambda}g(y)\left(\frac{2}{1-|y|^{2}}\right)^{n-\frac{\lambda}{2}}dxdy.
\end{split}
\end{equation*}
Set $\widetilde{f}=f(x)\left(\frac{2}{1-|x|^{2}}\right)^{n-\frac{\lambda}{2}}$ and $\widetilde{g}=g(y)\left(\frac{2}{1-|y|^{2}}\right)^{n-\frac{\lambda}{2}}$.
Then by Theorem 4.1,
\begin{equation*}
\begin{split}
\left|\int_{\mathbb{B}^{n}}\int_{\mathbb{B}^{n}}\frac{f(x)g(y)}{\left(2\sinh\frac{\rho(T_{y}(x))}{2}\right)^{\lambda}}dV_{x}dV_{y}\right|=&
\left|\int_{\mathbb{B}^{n}}\int_{\mathbb{B}^{n}}\widetilde{f}(x)
|x-y|^{-\lambda}\widetilde{g}(y)dxdy\right|\\
\leq&\int_{\mathbb{B}^{n}}\int_{\mathbb{B}^{n}}|\widetilde{f}(x)|\cdot
|x-y|^{-\lambda}\cdot|\widetilde{g}(y)|dxdy\\
\leq&C_{n,\lambda}\left(\int_{\mathbb{B}^{n}}|\widetilde{f}(x)|^{\frac{2n}{2n-\lambda}}dx\right)^{\frac{2n-\lambda}{2n}}\cdot
\left(\int_{\mathbb{B}^{n}}|\widetilde{g}(y)|^{\frac{2n}{2n-\lambda}}dy\right)^{\frac{2n-\lambda}{2n}}\\
=&C_{n,\lambda}\left(\int_{\mathbb{B}^{n}}|f|^{\frac{2n}{2n-\lambda}}dV\right)^{\frac{2n-\lambda}{2n}}\cdot
\left(\int_{\mathbb{B}^{n}}|g(y)|^{\frac{2n}{2n-\lambda}}dV\right)^{\frac{2n-\lambda}{2n}}.
\end{split}
\end{equation*}
Also by Theorem 4.1, it is easy to see that $C_{n,\lambda}$ is sharp and there is no nonezero extreme function.
The proof of Theorem 1.2 is thereby completed.
\\

Before the proof of Theorem 1.1, we need the following Lemma.
\begin{lemma}
Let $0<\alpha<n$, $0<\beta<n$ and $0<\alpha+\beta<n$. Then
\begin{equation*}
\begin{split}
&\int_{\mathbb{B}^{n}}\left(\sinh\frac{\rho(x,y)}{2}\right)^{\alpha-n}\left(\cosh\frac{\rho(x,y)}{2}\right)^{-\alpha-\beta}
\left(\sinh\frac{\rho(x,z)}{2}\right)^{\beta-n}dV_{x}\\
=&\left\{\left[\left(\sinh\frac{\rho}{2}\right)^{\alpha-n}\left(\cosh\frac{\rho}{2}\right)^{-\alpha-\beta}\right]\ast
\left(\sinh\frac{\rho}{2}\right)^{\beta-n}\right\}(T_{z}(y))\\
\leq& 2^{n}
\frac{\gamma(\alpha)\gamma(\beta)}{\gamma(\alpha+\beta)}
\left(\sinh\frac{\rho(y,z)}{2}\right)^{\alpha+\beta-n}\left(\cosh\frac{\rho(y,z)}{2}\right)^{-\alpha},
\end{split}
\end{equation*}
where
$
\gamma(\alpha)=\pi^{n/2}2^{\alpha}\Gamma(\alpha/2)/\Gamma(\frac{n}{2}-\frac{\alpha}{2}).
$
\end{lemma}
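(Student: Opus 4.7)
The lemma has two assertions: an identity expressing the integral as a Möbius convolution evaluated at $T_z(y)$, and an upper bound for that convolution. The identity is formal. Setting
\[
F(w) = \left(\sinh\tfrac{\rho(w)}{2}\right)^{\alpha-n}\left(\cosh\tfrac{\rho(w)}{2}\right)^{-\alpha-\beta}, \qquad G(w) = \left(\sinh\tfrac{\rho(w)}{2}\right)^{\beta-n},
\]
the definition (\ref{2.3}) gives $(F \ast G)(T_z(y)) = \int_{\mathbb{B}^n} F(w)\,G(T_{T_z(y)}(w))\,dV_w$. Using the Möbius change of variable $w = T_z(x)$ together with the invariances $\rho(T_z(x)) = \rho(x,z)$ and $\rho(T_{T_z(y)}(T_z(x))) = \rho(x,y)$ (and commutativity $F\ast G = G \ast F$, which follows directly from the involutive property of $T_a$) recovers the integral on the left-hand side.

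To prove the inequality, Möbius invariance of both sides reduces the task to $z = 0$. Using the half-angle formulas from (\ref{2.2}) together with the algebraic identity $1 - 2x\cdot y + |x|^2|y|^2 = |y|^2|x - y^*|^2$, where $y^* = y/|y|^2$, the $(1-|x|^2)$ factors in the integrand balance exactly against those in $dV_x = (2/(1-|x|^2))^n\,dx$, and the left-hand side reduces to
\[
2^n\,(1-|y|^2)^{(n+\beta)/2}\,|y|^{-(\alpha+\beta)}\int_{\mathbb{B}^n} |x-y|^{\alpha-n}\,|x-y^*|^{-(\alpha+\beta)}\,|x|^{\beta-n}\,dx.
\]

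The next step is the spherical inversion $x \mapsto u = x/|x|^2$, which sends $\mathbb{B}^n$ to $\{|u|>1\}$ and satisfies $|x-y| = |y||u-y^*|/|u|$, $|x-y^*| = |u-y|/(|u||y|)$, and $dx = |u|^{-2n}du$. Every power of $|u|$ cancels, transforming the displayed integral into $|y|^{2\alpha+\beta-n}\int_{|u|>1} |u-y^*|^{\alpha-n}|u-y|^{-(\alpha+\beta)}\,du$. Since the integrand is nonnegative, this is majorized by the same integral over all of $\mathbb{R}^n$, to which the classical Riesz composition formula applies with exponents $s = \alpha$ and $t = n-\alpha-\beta$ (both positive by the hypotheses $\alpha,\beta>0$ and $\alpha+\beta<n$), producing $(\gamma(\alpha)\gamma(n-\alpha-\beta)/\gamma(n-\beta))\,|y-y^*|^{-\beta}$, where $|y-y^*| = (1-|y|^2)/|y|$ by a direct calculation.

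The only non-bookkeeping step is the identity $\gamma(\alpha)\gamma(n-\alpha-\beta)/\gamma(n-\beta) = \gamma(\alpha)\gamma(\beta)/\gamma(\alpha+\beta)$, which follows immediately from $\gamma(s) = \pi^{n/2}2^s\Gamma(s/2)/\Gamma((n-s)/2)$ after cancelling the common $\pi^{n/2}$ and $2^s$ factors, the remaining ratio of $\Gamma$-functions being symmetric in the swap $(\beta)\leftrightarrow(n-\alpha-\beta)$. Substituting back and collecting the $(1-|y|^2)$ and $|y|$ exponents reproduces the stated upper bound. The conceptual point is that spherical inversion transports the hyperbolic convolution on $\mathbb{B}^n$ into a Euclidean Riesz-type convolution on $\mathbb{R}^n$ (after enlarging the domain of integration), after which the classical formula closes the estimate; the main obstacle is simply tracking the exponents of $|y|$ and $(1-|y|^2)$ through the chain of substitutions.
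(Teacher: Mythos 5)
Your proof is correct, but the way you handle the inequality is genuinely different from the paper's. Both arguments share the same skeleton: reduce by M\"obius invariance, pass to Euclidean coordinates via (\ref{2.2}), enlarge the domain of integration to all of $\mathbb{R}^{n}$ (the integrand being nonnegative), and close with the Riesz composition formula (\ref{4.1}). The difference is in where the weight $\left(\cosh\frac{\rho}{2}\right)^{-\alpha-\beta}$ sits when you pass to Euclidean coordinates. The paper exploits commutativity of the convolution to prove the bound in the form (\ref{b4.2}), where the cosh-weighted kernel is centered at the \emph{origin}; then $\cosh\frac{\rho(x)}{2}=(1-|x|^{2})^{-1/2}$ has no cross term, all powers of $(1-|x|^{2})$ cancel, and the integral is immediately $2^{n}(1-|y|^{2})^{(n-\beta)/2}\int_{\mathbb{B}^{n}}|x|^{\alpha-n}|x-y|^{\beta-n}\,dx$, to which (\ref{4.1}) applies directly with the constant $\gamma(\alpha)\gamma(\beta)/\gamma(\alpha+\beta)$ appearing as is. You instead keep the cosh weight on the $y$-centered kernel, which produces the extra factor $|x-y^{*}|^{-(\alpha+\beta)}$ through the identity $1-2x\cdot y+|x|^{2}|y|^{2}=|y|^{2}|x-y^{*}|^{2}$, and you then need the Kelvin inversion $x\mapsto x/|x|^{2}$ to convert the integral into a two-point Riesz convolution between $y$ and $y^{*}$, plus the Gamma-function identity $\gamma(\alpha)\gamma(n-\alpha-\beta)/\gamma(n-\beta)=\gamma(\alpha)\gamma(\beta)/\gamma(\alpha+\beta)$ to recognize the constant. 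I checked your exponent bookkeeping (the powers of $|u|$ cancel exactly under inversion, and the final exponents $(1-|y|^{2})^{(n-\beta)/2}|y|^{\alpha+\beta-n}$ match the right-hand side) and the hypotheses $s=\alpha>0$, $t=n-\alpha-\beta>0$, $s+t=n-\beta<n$ needed for (\ref{4.1}); all of this is sound. What the paper's choice buys is brevity: no inversion, no cross term, no Gamma identity. What your route buys is robustness: it attacks the convolution in exactly the form given in the statement of the lemma, without needing to notice the favorable re-centering, and it displays explicitly how the hyperbolic kernel geometry (the point $y$ and its inverse point $y^{*}$) is transported to Euclidean Riesz potentials.
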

\begin{proof}
We firstly show
\begin{equation}\label{b4.1}
\begin{split}
&\int_{\mathbb{B}^{n}}\left(\sinh\frac{\rho(x,y)}{2}\right)^{\alpha-n}\left(\cosh\frac{\rho(x,y)}{2}\right)^{-\alpha-\beta}
\left(\sinh\frac{\rho(x,z)}{2}\right)^{\beta-n}dV_{x}\\
=&\left\{\left[\left(\sinh\frac{\rho}{2}\right)^{\alpha-n}\left(\cosh\frac{\rho}{2}\right)^{-\alpha-\beta}\right]\ast
\left(\sinh\frac{\rho}{2}\right)^{\beta-n}\right\}(T_{z}(y)).
\end{split}
\end{equation}
In fact, using the following  identity (see \cite{liup}, (3.13)),
\begin{equation*}
\begin{split}
|T_{z}(T_{y}(x))|=|T_{x}(T_{y}(z))|, \;\;x,y,z\in\mathbb{B}^{n},
\end{split}
\end{equation*}
we have, by the M\"obius shift invariance,
\begin{equation*}
\begin{split}
&\int_{\mathbb{B}^{n}}\left(\sinh\frac{\rho(x,y)}{2}\right)^{\alpha-n}\left(\cosh\frac{\rho(x,y)}{2}\right)^{-\alpha-\beta}
\left(\sinh\frac{\rho(x,z)}{2}\right)^{\beta-n}dV_{x}\\
=&\int_{\mathbb{B}^{n}}\left(\sinh\frac{\rho(T_{y}(x),y)}{2}\right)^{\alpha-n}\left(\cosh\frac{\rho(T_{y}(x),y)}{2}\right)^{-\alpha-\beta}
\left(\sinh\frac{\rho(T_{y}(x),z)}{2}\right)^{\beta-n}dV_{x}\\
=&\int_{\mathbb{B}^{n}}\left(\sinh\frac{\rho(T_{y}(T_{y}(x)))}{2}\right)^{\alpha-n}\left(\cosh\frac{\rho(T_{y}(T_{y}(x)))}{2}\right)^{-\alpha-\beta}
\left(\sinh\frac{\rho(T_{z}(T_{y}(x)))}{2}\right)^{\beta-n}dV_{x}\\
=&\int_{\mathbb{B}^{n}}\left(\sinh\frac{\rho(x)}{2}\right)^{\alpha-n}\left(\cosh\frac{\rho(x)}{2}\right)^{-\alpha-\beta}
\left(\sinh\frac{\rho(T_{x}(T_{z}(y)))}{2}\right)^{\beta-n}dV_{x}\\
=&\int_{\mathbb{B}^{n}}\left(\sinh\frac{\rho(x)}{2}\right)^{\alpha-n}\left(\cosh\frac{\rho(x)}{2}\right)^{-\alpha-\beta}
\left(\sinh\frac{\rho(T_{T_{z}(y)}(x)}{2}\right)^{\beta-n}dV_{x}\\
=&\left\{\left[\left(\sinh\frac{\rho}{2}\right)^{\alpha-n}\left(\cosh\frac{\rho}{2}\right)^{-\alpha-\beta}\right]\ast
\left(\sinh\frac{\rho}{2}\right)^{\beta-n}\right\}(T_{z}(y)).
\end{split}
\end{equation*}

To finish the proof, it is enough to show
\begin{equation}\label{b4.2}
\begin{split}
&\left\{\left[\left(\sinh\frac{\rho}{2}\right)^{\alpha-n}\left(\cosh\frac{\rho}{2}\right)^{-\alpha-\beta}\right]\ast
\left(\sinh\frac{\rho}{2}\right)^{\beta-n}\right\}(y)\\
\leq& 2^{n}
\frac{\gamma(\alpha)\gamma(\beta)}{\gamma(\alpha+\beta)}
\left(\sinh\frac{\rho(y)}{2}\right)^{\alpha+\beta-n}\left(\cosh\frac{\rho(y)}{2}\right)^{-\alpha}.
\end{split}
\end{equation}
Notice that, for $0<\alpha<n$, $0<\beta<n$ and $0<\alpha+\beta<n$, we have  (see e.g. \cite{s})
\begin{equation}\label{4.1}
\begin{split}
\int_{\mathbb{R}^{n}}|x|^{\alpha-n}|y-x|^{\beta-n}dx=
\frac{\gamma(\alpha)\gamma(\beta)}{\gamma(\alpha+\beta)}|y|^{\alpha+\beta-n}.
\end{split}
\end{equation}
Therefore, by (\ref{2.2}) and (\ref{4.1}),
\begin{equation*}
\begin{split}
&\left\{\left[\left(\sinh\frac{\rho}{2}\right)^{\alpha-n}\left(\cosh\frac{\rho}{2}\right)^{-\alpha-\beta}\right]\ast
\left(\sinh\frac{\rho}{2}\right)^{\beta-n}\right\}(y)\\
=&\int_{\mathbb{B}^{n}}\left(\sinh\frac{\rho(x)}{2}\right)^{\alpha-n}\left(\cosh\frac{\rho(x)}{2}\right)^{-\alpha-\beta}
\left(\sinh\frac{\rho(T_{y}(x))}{2}\right)^{\beta-n}
\left(\frac{2}{1-|x|^{2}}\right)^{n}dx\\
=&\int_{\mathbb{B}^{n}}\left(\frac{|x|}{\sqrt{1-|x|^{2}}}\right)^{\alpha-n}\left(\frac{1}{\sqrt{1-|x|^{2}}}\right)^{-\alpha-\beta}
\left(\frac{|x-y|}{\sqrt{(1-|y|^{2})(1-|x|^{2})}}\right)^{\beta-n}
\left(\frac{2}{1-|x|^{2}}\right)^{n}dx\\
=&\frac{2^{n}}{(1-|y|^{2})^{(\beta-n)/2}}\int_{\mathbb{B}^{n}}|x|^{\alpha-n}|x-y|^{\beta-n}dx\\
\leq&\frac{2^{n}}{(1-|y|^{2})^{(\beta-n)/2}}\int_{\mathbb{R}^{n}}|x|^{\alpha-n}|x-y|^{\beta-n}dx\\
=&\frac{2^{n}}{(1-|y|^{2})^{(\beta-n)/2}}\cdot\frac{\gamma(\alpha)\gamma(\beta)}{\gamma(\alpha+\beta)}
|y|^{\alpha+\beta-n}\\
=&2^{n}
\frac{\gamma(\alpha)\gamma(\beta)}{\gamma(\alpha+\beta)}
\left(\sinh\frac{\rho(y)}{2}\right)^{\alpha+\beta-n}\left(\cosh\frac{\rho(y)}{2}\right)^{-\alpha}.
\end{split}
\end{equation*}
The desired result follows.
\end{proof}

\begin{lemma}
Let $k$ be a positive integer with  $2\leq k<\frac{n}{2}$. The kernel $Q^{-1}_{k}(\rho)$ satisfies
 \begin{equation}\label{4.2}
\begin{split}
Q^{-1}_{k}(\rho)\lesssim \left(\frac{1}{\sinh\frac{\rho}{2}}\right)^{n-2k},\;\; \rho>0.
\end{split}
 \end{equation}
\end{lemma}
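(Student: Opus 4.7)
The plan is to realize $Q_k^{-1}$ as an iterated hyperbolic convolution of the Green's functions associated to the elementary factors in (\ref{1.8}) and to control that iterated convolution by induction on the number of factors, applying Lemma 4.2 at each step.

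First, since $Q_k=\widetilde{\Delta}\cdot\prod_{j=2}^{k}(P_1+j(j-1))$ with $\widetilde{\Delta}:=-\Delta_{\mathbb{H}}-(n-1)^2/4$ is a product of commuting radial Fourier multipliers on $\mathbb{B}^n$, identity (\ref{2.4}) tells us that the kernel of $Q_k^{-1}$ is the iterated hyperbolic convolution of the kernels of the individual resolvents. Lemma 3.4 gives $\widetilde{\Delta}^{-1}(\rho)\lesssim(\sinh\tfrac{\rho}{2})^{2-n}(\cosh\tfrac{\rho}{2})^{-1}$, and a short algebraic check using $(n-1)^2-n(n-2)=1$ shows $\sqrt{j(j-1)-n(n-2)/4+(n-1)^2/4}=(2j-1)/2$, so that Lemma 3.1 applied to $P_1+j(j-1)=-\Delta_{\mathbb{H}}+j(j-1)-n(n-2)/4$ yields
\[
(P_1+j(j-1))^{-1}(\rho)\lesssim(\sinh\tfrac{\rho}{2})^{2-n}(\cosh\tfrac{\rho}{2})^{-2j},\qquad j=2,\ldots,k.
\]

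I would then prove by induction on $l$ that
\[
F_l(\rho):=\bigl(\widetilde{\Delta}^{-1}\ast(P_1+2)^{-1}\ast\cdots\ast(P_1+l(l-1))^{-1}\bigr)(\rho)\lesssim(\sinh\tfrac{\rho}{2})^{2l-n},\qquad 1\leq l\leq k.
\]
The base case $l=1$ is immediate from the bound on $\widetilde{\Delta}^{-1}$ together with $\cosh\geq 1$. For the step $l\to l+1$, substituting the inductive bound on $F_l$ and the above bound on $(P_1+(l+1)l)^{-1}$ into
\[
F_{l+1}(\rho(x,y))=\int_{\mathbb{B}^n}F_l(\rho(x,\xi))\,(P_1+(l+1)l)^{-1}(\rho(\xi,y))\,dV_\xi
\]
produces an integrand of exactly the form treated by Lemma 4.2 with $\alpha=2$ and $\beta=2l$. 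Since $\alpha+\beta=2(l+1)\leq 2k<n$, Lemma 4.2 applies and gives
\[
F_{l+1}(\rho)\lesssim(\sinh\tfrac{\rho}{2})^{2(l+1)-n}(\cosh\tfrac{\rho}{2})^{-2}\leq(\sinh\tfrac{\rho}{2})^{2(l+1)-n},
\]
which completes the induction. Setting $l=k$ yields the claim.

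The one thing to get right is the parameter matching at each inductive step: the exponent $2(l+1)=\alpha+\beta$ on the cosh factor produced by the Green's-function bound for $(P_1+(l+1)l)^{-1}$ is exactly what Lemma 4.2 demands on its first kernel, while the inductive $(\sinh\tfrac{\rho}{2})^{2l-n}$ bound on $F_l$ supplies the cosh-free second factor $(\sinh)^{\beta-n}$ with $\beta=2l$. Because the leftover $(\cosh\tfrac{\rho}{2})^{-2}$ at each step (and the $(\cosh\tfrac{\rho}{2})^{-1}$ from $\widetilde{\Delta}^{-1}$ in the base case) is harmless and can be discarded, no cosh bookkeeping propagates through the induction, and the sinh exponents telescope cleanly to $2k-n$ at level $l=k$.
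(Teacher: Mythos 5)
Your proposal is correct and follows essentially the same route as the paper: both express $Q_k^{-1}$ as an iterated hyperbolic convolution via (\ref{2.4}), bound the factor kernels by Lemma 3.4 and Lemma 3.1 (with the same computation $\lambda+\tfrac{(n-1)^2}{4}=(i-\tfrac12)^2$ giving the $(\cosh\tfrac{\rho}{2})^{-2i}$ exponent), and then induct using Lemma 4.2 with $\alpha=2$, $\beta=2l$, discarding the leftover $(\cosh\tfrac{\rho}{2})^{-2}$ at each step. The only cosmetic difference is that you start the induction at $l=1$ with the bare bound on $\bigl(-\Delta_{\mathbb{H}}-\tfrac{(n-1)^2}{4}\bigr)^{-1}$, whereas the paper's base case is $k=2$; the two organizations are equivalent.
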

\begin{proof}
Recall that
 \begin{equation*}
\begin{split}
Q_{k}=\left(-\Delta_{\mathbb{H}}-\frac{(n-1)^{2}}{4}\right)(P_{1}+2)\cdot\cdots\cdot(P_{1}+k(k-1)).
\end{split}
 \end{equation*}
We have, by (\ref{2.4}),
 \begin{equation*}
\begin{split}
Q^{-1}_{k}=\left(-\Delta_{\mathbb{H}}-\frac{(n-1)^{2}}{4}\right)^{-1}\ast(P_{1}+2)^{-1}\ast\cdots\ast(P_{1}+k(k-1))^{-1},
\end{split}
 \end{equation*}
where, by Lemma 3.1,  $(P_{1}+i(i-1))^{-1} (2\leq i\leq k)$ satisfies
\begin{equation}\label{4.3}
\begin{split}
(P_{1}+i(i-1))^{-1}=&\left(i(i-1)-\frac{n(n-2)}{4}-\Delta_{\mathbb{H}}\right)^{-1}\\
=&\left((i-1/2)^{2}-\frac{(n-1)^{2}}{4}-\Delta_{\mathbb{H}}\right)^{-1}\\
&\lesssim \left(\frac{1}{\sinh\frac{\rho}{2}}\right)^{n-2}
\left(\frac{1}{\cosh\frac{\rho}{2}}\right)^{2i}.
\end{split}
\end{equation}

We shall prove (\ref{4.2}) by induction. For $k=2$, we have, by  (\ref{4.3}) and Lemma 4.2,
 \begin{equation*}
\begin{split}
Q^{-1}_{2}=&\left(-\Delta_{\mathbb{H}}-\frac{(n-1)^{2}}{4}\right)^{-1}\ast(P_{1}+2)^{-1}\\
\lesssim&\left[\left(\frac{1}{\sinh\frac{\rho}{2}}\right)^{n-2}
\frac{1}{\cosh\frac{\rho}{2}}\right]\ast\left[\left(\frac{1}{\sinh\frac{\rho}{2}}\right)^{n-2}
\left(\frac{1}{\cosh\frac{\rho}{2}}\right)^{4}\right]\\
\leq&\left(\frac{1}{\sinh\frac{\rho}{2}}\right)^{n-2}
\ast\left[\left(\frac{1}{\sinh\frac{\rho}{2}}\right)^{n-2}
\left(\frac{1}{\cosh\frac{\rho}{2}}\right)^{4}\right]\\
\lesssim&\left(\frac{1}{\sinh\frac{\rho}{2}}\right)^{n-4}\left(\frac{1}{\cosh\frac{\rho}{2}}\right)^{2}\\
\leq&\left(\frac{1}{\sinh\frac{\rho}{2}}\right)^{n-4}.
\end{split}
 \end{equation*}

Now suppose that equation (\ref{4.3}) is valid for $k=l$,
i.e. $Q^{-1}_{l}(\rho)\lesssim \left(\frac{1}{\sinh\frac{\rho}{2}}\right)^{n-2l}.$ Then by (\ref{4.3}) and Lemma 4.2,
\begin{equation*}
\begin{split}
Q^{-1}_{l+1}(\rho)=&Q^{-1}_{l}(\rho)\ast (P_{1}+(l+1)l)\\
\lesssim& \left(\frac{1}{\sinh\frac{\rho}{2}}\right)^{n-2l}\ast\left[\left(\frac{1}{\sinh\frac{\rho}{2}}\right)^{n-2}
\left(\frac{1}{\cosh\frac{\rho}{2}}\right)^{2l+2}\right]\\
\lesssim&\left(\frac{1}{\sinh\frac{\rho}{2}}\right)^{n-2l-2}
\left(\frac{1}{\cosh\frac{\rho}{2}}\right)^{2}\\
\leq&\left(\frac{1}{\sinh\frac{\rho}{2}}\right)^{n-2l-2}.
\end{split}
 \end{equation*}
The desired result follows.
\end{proof}

\textbf{Proof of Theorem 1.1}
We first prove, for some positive constant $C>0$,
\begin{equation}\label{4.4}
\int_{\mathbb{B}^{n}}(Q_{k}u)udV\geq C\left(\int_{\mathbb{B}^{n}}|u|^{\frac{2n}{n-2k}}dV\right)^{\frac{n-2k}{n}},\;\; u\in C^{\infty}_{0}(\mathbb{B}^{n}).
\end{equation}
Without loss of generality, we may assume $u\geq0$.

\medskip

By Lemma 4.3 and Theorem 1.2, we have
\begin{equation}\label{4.5}
\begin{split}
\int_{\mathbb{B}^{n}}|(Q^{-\frac{1}{2}}_{k}f)(x)|^{2}dV=&\int_{\mathbb{B}^{n}}f(x)(Q^{-1}_{k}f)(x)dV\\
\lesssim&
\int_{\mathbb{B}^{n}}\int_{\mathbb{B}^{n}}\frac{f(x)f(y)}{\left(2\sinh\frac{\rho(T_{y}(x))}{2}\right)^{n-2k}}dV_{x}dV_{y}\\
&\lesssim
\left(\int_{\mathbb{B}^{n}}|f(x)|^{\frac{2n}{n+2k}}dV\right)^{\frac{n+2k}{n}}.
\end{split}
 \end{equation}

On the other hand,
\begin{equation}\label{4.6}
\begin{split}
\left|\int_{\mathbb{B}^{n}}f(x)g(x)dV\right|^{2}=&\left|\int_{\mathbb{B}^{n}}(Q^{\frac{1}{2}}_{k}f)(x)(Q^{-\frac{1}{2}}_{k}g)(x)dV\right|^{2}\\
\leq&\int_{\mathbb{B}^{n}}|(Q^{\frac{1}{2}}_{k}f)(x)|^{2}dV\cdot\int_{\mathbb{B}^{n}}|(Q^{-\frac{1}{2}}_{k}g)(x)|^{2}dV.
\end{split}
 \end{equation}
Combing (\ref{4.5}) and (\ref{4.6}) yields
\begin{equation}\label{4.7}
\begin{split}
\left|\int_{\mathbb{B}^{n}}f(x)g(x)dV\right|^{2}\lesssim&\int_{\mathbb{B}^{n}}|(Q^{\frac{1}{2}}_{k}f)(x)|^{2}dV
\left(\int_{\mathbb{B}^{n}}|g(x)|^{\frac{2n}{n+2k}}dV\right)^{\frac{n+2k}{n}}\\
=&\int_{\mathbb{B}^{n}}Q_{k}f(x)\cdot f(x)dV
\left(\int_{\mathbb{B}^{n}}|g(x)|^{\frac{2n}{n+2k}}dV\right)^{\frac{n+2k}{n}}.
\end{split}
 \end{equation}
Taking $g=f^{\frac{n+2k}{n-2k}}$, we have, by (\ref{4.7}),
\begin{equation}
\begin{split}
\left(\int_{\mathbb{B}^{n}}|f(x)|^{\frac{2n}{n-2k}}dV\right)^{2}\lesssim&\int_{\mathbb{B}^{n}}Q_{k}f(x)\cdot f(x)dV
\left(\int_{\mathbb{B}^{n}}|f(x)|^{\frac{2n}{n-2k}}dV\right)^{\frac{n+2k}{n}}.\\
\end{split}
 \end{equation}
Therefore,
\begin{equation}
\begin{split}
\left(\int_{\mathbb{B}^{n}}|f(x)|^{\frac{2n}{n-2k}}dV\right)^{\frac{n-2k}{n}}\lesssim\int_{\mathbb{B}^{n}}Q_{k}f(x)\cdot f(x)dV.
\end{split}
 \end{equation}

Now we shall prove inequality (\ref{1.9}).
Notice that, by Plancherel formula, (\ref{2.6}) and (\ref{1.3}),
\begin{equation}\label{4.10},
\begin{split}
\int_{\mathbb{B}^{n}}Q_{k}f(x)\cdot f(x)dV=&D_{n}\int^{+\infty}_{-\infty}\int_{\mathbb{S}^{n-1}}\frac{\lambda^{2}}{4}\prod^{k}_{i=2}\frac{(2i-1)^{2}+\lambda^{2}}{4}|\widehat{f}(\lambda,\zeta)|^{2}|\mathfrak{c}(\lambda)|^{-2}d\lambda d\sigma(\varsigma)\\
\geq&\prod^{k}_{i=2}\frac{(2i-1)^{2}}{4}D_{n}\int^{+\infty}_{-\infty}\int_{\mathbb{S}^{n-1}}\frac{\lambda^{2}}{4}
|\widehat{f}(\lambda,\zeta)|^{2}|\mathfrak{c}(\lambda)|^{-2}d\lambda d\sigma(\varsigma)\\
=&\prod^{k}_{i=2}\frac{(2i-1)^{2}}{4}\left(\int_{\mathbb{B}^{n}}|\nabla_{\mathbb{H}}u|^{2}dV-\frac{(n-1)^{2}}{4}\int_{\mathbb{B}^{n}}u^{2}dV\right)\\
\geq&C\left(\int_{\mathbb{B}^{n}}|u|^{p}dV\right)^{\frac{2}{p}},\;\; 2<p\leq\frac{2n}{n-2}.
\end{split}
 \end{equation}
Therefore, for $2<p\leq\frac{2n}{n-2k}$, choose $\widetilde{p}\in(2,\frac{2n}{n-2}]$ such that $\widetilde{p}<p$. By H\"older inequality and (\ref{4.10}),
\begin{equation}
\begin{split}
\int_{\mathbb{B}^{n}}|u|^{p}dV=&\int_{\mathbb{B}^{n}}|u|^{s}|u|^{t}dV\\
\leq&\left(\int_{\mathbb{B}^{n}}|u|^{\widetilde{p}}dV\right)^{\frac{s}{\widetilde{p}}}
\left(\int_{\mathbb{B}^{n}}|u|^{\frac{2n}{n-2k}}dV\right)^{\frac{(n-2k)t}{2n}}\\
\leq&\left(\int_{\mathbb{B}^{n}}Q_{k}f(x)\cdot f(x)dV\right)^{\frac{s}{2}}\left(\int_{\mathbb{B}^{n}}Q_{k}f(x)\cdot f(x)dV\right)^{\frac{t}{2}}\\
=&\left(\int_{\mathbb{B}^{n}}Q_{k}f(x)\cdot f(x)dV\right)^{\frac{p}{2}},
\end{split}
 \end{equation}
where $s=(1-\frac{n-2k}{2n}p)(\frac{1}{\widetilde{p}}-\frac{n-2k}{2n})^{-1}$ and $t=p-s$.
The desired result follows.

\section{proof of Theorem 1.4}
It has been shown by Liu (see \cite{liu}, Theorem 2.3):
\begin{equation}\label{b7.1}
\left(\frac{1-|x|^{2}}{2}\right)^{k+\frac{n}{2}}(-\Delta)^{k}\left[\left(\frac{1-|x|^{2}}{2}\right)^{k-\frac{n}{2}}f\right]=P_{k}f,\;\; f\in C^{\infty}_{0}(\mathbb{B}^{n}),\;\;k\in\mathbb{N}.
\end{equation}
Similarly, in term of the half space model $\mathbb{H}^{n}$
we also have the following:
\begin{lemma}
Let $k$ be a positive integer. There holds,  for each $f\in C^{\infty}_{0}(\mathbb{H}^{n})$,
\begin{equation}\label{5.1}
x^{\frac{n}{2}+k}_{1}(-\Delta)^{k}(x^{k-\frac{n}{2}}_{1}f)=P_{k}f.
\end{equation}
\end{lemma}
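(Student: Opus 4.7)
My plan is to prove (\ref{5.1}) by induction on $k$, in direct parallel with Liu's ball-model identity (\ref{b7.1}) quoted above. Conceptually, both identities encode the conformal covariance of the GJMS operator: the hyperbolic metric in the half-space model is $g_{\mathbb{H}} = x_1^{-2} g_{\mathrm{euc}}$, and the Euclidean GJMS operator coincides with $(-\Delta)^k$. The transformation law $P_k^{e^{2\phi} g}(u) = e^{-(n/2+k)\phi} P_k^g(e^{(n/2-k)\phi} u)$ applied with $e^\phi = x_1^{-1}$ therefore reads off exactly the right-hand side of (\ref{5.1}); the induction converts this principle into an algebraic verification compatible with the product definition (\ref{1.5}) of $P_k$.

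I would first dispatch the base case $k=1$ by a direct Leibniz computation: expanding $\Delta(x_1^{1-n/2} f)$ produces $x_1^{1-n/2}\Delta f$, a cross term $2(1-n/2) x_1^{-n/2}\partial_1 f$, and a pure second-derivative term $(1-n/2)(-n/2) x_1^{-1-n/2} f$. Multiplying by $x_1^{n/2+1}$ and invoking the half-space expression $\Delta_{\mathbb{H}} = x_1^2 \Delta - (n-2) x_1 \partial_1$ from (\ref{2.1}), the first two terms reassemble into $-\Delta_{\mathbb{H}} f$ and the third reduces to $-\tfrac{n(n-2)}{4} f$; the sum is $P_1 f$.

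For the inductive step, I would insert $x_1^{n/2+k-1}\cdot x_1^{-(n/2+k-1)} = 1$ between the two factors of $(-\Delta)$'s and apply the inductive hypothesis with $g := x_1 f$ (which matches the inner exponent since $x_1^{k-1-n/2}\cdot x_1 = x_1^{k-n/2}$) to rewrite
\[
x_1^{n/2+k}(-\Delta)^k(x_1^{k-n/2} f) = x_1^{n/2+k}(-\Delta)\bigl\{x_1^{-(n/2+k-1)} P_{k-1}(x_1 f)\bigr\}.
\]
I would then expand the outer $(-\Delta)$ by Leibniz and decompose $P_{k-1}(x_1 f) = x_1\, P_{k-1} f + [P_{k-1}, x_1] f$; after carrying out the resulting rearrangement, the leading Euclidean derivatives assemble into $-\Delta_{\mathbb{H}}(P_{k-1} f)$, the numerical shift works out to exactly $k(k-1) - n(n-2)/4$, and the commutator contributions cancel against the remaining cross-terms, yielding $(P_1 + k(k-1))P_{k-1} f = P_k f$.

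The main obstacle is precisely this commutator cancellation: since $P_{k-1}$ is a $(2k-2)$-order intrinsic hyperbolic operator, its commutator $[P_{k-1}, x_1]$ with the Cartesian multiplier $x_1$ is a nontrivial $(2k-3)$-order operator, and the cancellation is not transparent from (\ref{1.5}) alone. It is however forced by the conformal-covariance viewpoint above: each conjugation by $x_1^{\alpha}$ shifts a spectral parameter by a determined amount, and iterating $k$ times produces the constants $j(j-1)$, $j=1,\ldots,k$, built into (\ref{1.5}). If one prefers to sidestep the commutator analysis altogether, the identity can alternatively be derived by transporting Liu's ball identity (\ref{b7.1}) through the Cayley-type M\"obius isometry from $\mathbb{B}^n$ to $\mathbb{H}^n$: under this map the conformal factors $(1-|y|^2)/2$ and $x_1$ correspond up to the explicit Jacobian of the Cayley transform, and (\ref{5.1}) follows by a change of variables.
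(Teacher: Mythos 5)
Your base case is correct and agrees with the paper's (which derives it from the conjugation identity (\ref{5.3})), and your overall strategy of induction on $k$ is also the paper's. The genuine gap is in your inductive step. By applying the inductive hypothesis to $g = x_1 f$ \emph{first}, you are forced to handle $P_{k-1}(x_1 f)$, i.e.\ the commutator $[P_{k-1},x_1]$ of the Cartesian multiplier $x_1$ with an intrinsic hyperbolic operator of order $2k-2$. This is an order-$(2k-3)$ operator with no closed form available from the product definition (\ref{1.5}), and you never compute it: you assert that its contributions ``cancel against the remaining cross-terms'' and justify the cancellation by the conformal covariance of GJMS operators. That justification is circular in this paper's framework: here $P_k$ is \emph{defined} by the product formula (\ref{1.5}), and Lemma 5.1 is precisely the statement of conformal covariance of that operator relative to the flat metric --- the very thing being proven. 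So the central step of your induction is unproven, and nothing in your write-up indicates how to carry it out for general $k$.

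The paper's induction closes because it commutes $x_1$ past the \emph{Euclidean} powers of the Laplacian before any conversion to hyperbolic operators: identity (\ref{5.6}), $\Delta^{l+1}(x_1 g) = x_1\Delta^{l+1}g + 2(l+1)\frac{\partial}{\partial x_1}\Delta^{l}g$, is a one-line computation because $[\Delta,x_1]=2\frac{\partial}{\partial x_1}$ commutes with $\Delta$. A single application of (\ref{5.3}) with $\alpha=\frac{n}{2}+l$ then converts to hyperbolic terms, and the two first-order cross terms combine into the pure constant $-2(l+1)\left(\frac{n}{2}+l\right)$, which shifts $\frac{(n+2l)(n+2l+2)}{4}$ to $\frac{(n-2l-2)(n+2l)}{4}$, exactly the next factor in the product. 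Your ordering (hyperbolic conversion via the inductive hypothesis first, commutator second) destroys this mechanism. Your fallback route --- transporting Liu's identity (\ref{b7.1}) through the Cayley transform --- could in principle be made rigorous, but it requires the M\"obius (Kelvin-type) covariance of the polyharmonic operator, namely $(-\Delta)^k\bigl(|D\Phi|^{\frac{n-2k}{2}}(u\circ\Phi)\bigr) = |D\Phi|^{\frac{n+2k}{2}}\bigl((-\Delta)^k u\bigr)\circ\Phi$ for M\"obius maps $\Phi$, a classical but nontrivial fact of exactly the same nature as (\ref{5.1}); saying the identity ``follows by a change of variables'' hides the entire content. To repair the proof, either redo the induction in the paper's order (peel off $x_1$ at the Euclidean level using (\ref{5.6}), then convert once via (\ref{5.3})), or state and cite the polyharmonic Kelvin transform precisely and carry out the bookkeeping of conformal factors.
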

\begin{proof}
 It is enough to show
\begin{equation}\label{5.2}
x^{\frac{n}{2}+k}_{1}\Delta^{k}(x^{k-\frac{n}{2}}_{1}f)=\prod^{k}_{i=1}\left[\Delta_{\mathbb{H}}+\frac{(n-2i)(n+2i-2)}{4}\right]f.
\end{equation}
We shall prove (\ref{5.2}) by induction.

\medskip

A simple calculation shows, for each $\alpha\in\mathbb{R}$ and $f\in C^{\infty}_{0}(\mathbb{H}^{n})$, there holds
\begin{equation}\label{5.3}
\begin{split}
x^{\alpha+2}_{1}\Delta(x^{-\alpha}_{1}f)=&x^{2}_{1}\Delta f-2\alpha x_{1}\frac{\partial f}{\partial x_{1}}+\alpha(\alpha+1)f\\
=&=[\Delta_{\mathbb{H}}+\alpha(\alpha+1)]f+(n-2-2\alpha)x_{1}\frac{\partial f}{\partial x_{1}}.
\end{split}
\end{equation}
If we choose $\alpha=\frac{n-2}{2}$ in (\ref{5.3}), then
\begin{equation}\label{5.4}
x^{\frac{n}{2}+1}_{1}\Delta(x^{1-\frac{n}{2}}_{1}f)=\left[\Delta_{\mathbb{H}}+\frac{n(n-2)}{4}\right]f.
\end{equation}

 Now suppose that equation (\ref{5.2}) is valid for $k=l$,
i.e.
\begin{equation}\label{5.5}
x^{\frac{n}{2}+l}_{1}\Delta^{l}(x^{l-\frac{n}{2}}_{1}f)=\prod^{l}_{i=1}\left[\Delta_{\mathbb{H}}+\frac{(n-2i)(n+2i-2)}{4}\right]f.
\end{equation}
We note it is easy to check
\begin{equation}\label{5.6}
\Delta^{l+1}(x_{1}g)=x_{1}\Delta^{l+1}g+2(l+1)\frac{\partial }{\partial x_{1}}\Delta^{l}g,\;\;g\in C^{\infty}_{0}(\mathbb{H}^{n}).
\end{equation}
Therefore, by  (\ref{5.5}) and (\ref{5.6}),
\begin{equation}\label{5.8}
\begin{split}
&x^{\frac{n}{2}+l+1}_{1}\Delta^{l+1}(x^{l+1-\frac{n}{2}}_{1}f)=x^{\frac{n}{2}+l+1}_{1}\Delta^{l+1}(x_{1}\cdot x^{l-\frac{n}{2}}_{1}f)\\
=&x^{\frac{n}{2}+l+1}_{1}x_{1}\Delta^{l+1}( x^{l-\frac{n}{2}}_{1}f)+2(l+1)x^{\frac{n}{2}+l+1}_{1}\frac{\partial }{\partial x_{1}}\Delta^{l}( x^{l-\frac{n}{2}}_{1}f)\\
=&x^{\frac{n}{2}+l+2}_{1}\Delta\left\{x^{-\frac{n}{2}-l}_{1}\prod^{l}_{i=1}\left[\Delta_{\mathbb{H}}+\frac{(n-2i)(n+2i-2)}{4}\right]f\right\}+\\
&2(l+1)x^{\frac{n}{2}+l+1}_{1}\frac{\partial }{\partial x_{1}}\left\{x^{-\frac{n}{2}-l}_{1}\prod^{l}_{i=1}\left[\Delta_{\mathbb{H}}+\frac{(n-2i)(n+2i-2)}{4}\right]f\right\}.
\end{split}
\end{equation}
By (\ref{5.3}), we have
\begin{equation}\label{5.9}
\begin{split}
&x^{\frac{n}{2}+l+2}_{1}\Delta\left\{x^{-\frac{n}{2}-l}_{1}\prod^{l}_{i=1}\left[\Delta_{\mathbb{H}}+\frac{(n-2i)(n+2i-2)}{4}\right]f\right\}
\\
=&\left[\Delta_{\mathbb{H}}+\frac{(n+2l)(n+2l+2)}{4}\right]\prod^{l}_{i=1}
\left[\Delta_{\mathbb{H}}+\frac{(n-2i)(n+2i-2)}{4}\right]f
-\\
&2(l+2)x_{1}\frac{\partial}{\partial x_{1}}\prod^{l}_{i=1}
\left[\Delta_{\mathbb{H}}+\frac{(n-2i)(n+2i-2)}{4}\right]f.
\end{split}
\end{equation}
Combing (\ref{5.8}) and (\ref{5.9}) yields
\begin{equation}\label{5.10}
\begin{split}
&x^{\frac{n}{2}+l+1}_{1}\Delta^{l+1}(x^{l+1-\frac{n}{2}}_{1}f)\\
=&\left[\Delta_{\mathbb{H}}+\frac{(n+2l)(n+2l+2)}{4}\right]\prod^{l}_{i=1}
\left[\Delta_{\mathbb{H}}+\frac{(n-2i)(n+2i-2)}{4}\right]f
-\\
&2(l+2)x_{1}\frac{\partial}{\partial x_{1}}\prod^{l}_{i=1}
\left[\Delta_{\mathbb{H}}+\frac{(n-2i)(n+2i-2)}{4}\right]f+\\
&2(l+1)x^{\frac{n}{2}+l+1}_{1}\frac{\partial }{\partial x_{1}}\left\{x^{-\frac{n}{2}-l}_{1}\prod^{l}_{i=1}\left[\Delta_{\mathbb{H}}+\frac{(n-2i)(n+2i-2)}{4}\right]f\right\}\\
=&\left[\Delta_{\mathbb{H}}+\frac{(n+2l)(n+2l+2)}{4}\right]\prod^{l}_{i=1}
\left[\Delta_{\mathbb{H}}+\frac{(n-2i)(n+2i-2)}{4}\right]f-\\
&2(l+1)x^{1-\frac{n}{2}-l}_{1}\cdot\frac{\partial x^{\frac{n}{2}+l}_{1}}{\partial x_{1}}\cdot\prod^{l}_{i=1}
\left[\Delta_{\mathbb{H}}+\frac{(n-2i)(n+2i-2)}{4}\right]f.
\end{split}
\end{equation}
To get the last equation in the above, we use the fact
\begin{equation*}
\begin{split}
&x_{1}\frac{\partial}{\partial x_{1}}\prod^{l}_{i=1}
\left[\Delta_{\mathbb{H}}+\frac{(n-2i)(n+2i-2)}{4}\right]f
\\
=&x_{1}\frac{\partial}{\partial x_{1}}\left\{x^{\frac{n}{2}+l}_{1}x^{-\frac{n}{2}-l}_{1}\prod^{l}_{i=1}
\left[\Delta_{\mathbb{H}}+\frac{(n-2i)(n+2i-2)}{4}\right]f\right\}\\
=&
x^{\frac{n}{2}+l+1}_{1}\frac{\partial }{\partial x_{1}}\left\{x^{-\frac{n}{2}-l}_{1}\prod^{l}_{i=1}\left[\Delta_{\mathbb{H}}+\frac{(n-2i)(n+2i-2)}{4}\right]f\right\}+\\
&x^{1-\frac{n}{2}-l}_{1}\frac{\partial x^{\frac{n}{2}+l}_{1}}{\partial x_{1}}\cdot\prod^{l}_{i=1}
\left[\Delta_{\mathbb{H}}+\frac{(n-2i)(n+2i-2)}{4}\right]f.
\end{split}
\end{equation*}
Therefore, by (\ref{5.10}),
\begin{equation}\label{5.11}
\begin{split}
&x^{\frac{n}{2}+l+1}_{1}\Delta^{l+1}(x^{l+1-\frac{n}{2}}_{1}f)\\
=&\left[\Delta_{\mathbb{H}}+\frac{(n+2l)(n+2l+2)}{4}\right]\prod^{l}_{i=1}
\left[\Delta_{\mathbb{H}}+\frac{(n-2i)(n+2i-2)}{4}\right]f-\\
&2(l+1)\left(\frac{n}{2}+l\right)\prod^{l}_{i=1}
\left[\Delta_{\mathbb{H}}+\frac{(n-2i)(n+2i-2)}{4}\right]f\\
=&\prod^{l+1}_{i=1}
\left[\Delta_{\mathbb{H}}+\frac{(n-2i)(n+2i-2)}{4}\right]f.
\end{split}
\end{equation}
The proof of Lemma 5.1 is thus completed.
\end{proof}

\textbf{Proof of Theorem 1.3}
We claim that
\begin{equation}\label{1.11}
\int_{\mathbb{R}^{n}_{+}}|\nabla^{k}u|^{2}dx- \prod^{k}_{i=1}\frac{(2i-1)^{2}}{4}\int_{\mathbb{R}^{n}_{+}}\frac{u^{2}}{x^{2k}_{1}}dx=\int_{\mathbb{B}^{n}}(P_{k}v)vdV- \prod^{k}_{i=1}\frac{(2i-1)^{2}}{4}\int_{\mathbb{B}^{n}}v^{2}dV,
\end{equation}
where $v=x^{\frac{n}{2}-k}_{1}u$. In fact, by Lemma 5.1,
\begin{equation}
\begin{split}
&\int_{\mathbb{R}^{n}_{+}}|\nabla^{k}u|^{2}dx- \prod^{k}_{i=1}\frac{(2i-1)^{2}}{4}\int_{\mathbb{R}^{n}_{+}}\frac{u^{2}}{x^{2k}_{1}}dx\\
=&
\int_{\mathbb{R}^{n}_{+}}u\cdot (-\Delta)^{k}u dx- \prod^{k}_{i=1}\frac{(2i-1)^{2}}{4}\int_{\mathbb{R}^{n}_{+}}\frac{u^{2}}{x^{2k}_{1}}dx\\
=&\int_{\mathbb{H}^{n}}x^{\frac{n}{2}-k}_{1}u\cdot P_{k}(x^{\frac{n}{2}-k}_{1}u)dV- \prod^{k}_{i=1}\frac{(2i-1)^{2}}{4}\int_{\mathbb{H}^{n}_{+}}(x^{\frac{n}{2}-k}_{1}u)^{2}dV\\
=&\int_{\mathbb{B}^{n}}(P_{k}v)vdV- \prod^{k}_{i=1}\frac{(2i-1)^{2}}{4}\int_{\mathbb{B}^{n}}v^{2}dV.
\end{split}
\end{equation}
Therefore, by Corollary 1.3,
\begin{equation*}
\begin{split}
&\int_{\mathbb{R}^{n}_{+}}|\nabla^{k}u|^{2}dx- \prod^{k}_{i=1}\frac{(2i-1)^{2}}{4}\int_{\mathbb{R}^{n}_{+}}\frac{u^{2}}{x^{2k}_{1}}dx\\
=&\int_{\mathbb{B}^{n}}(P_{k}v)vdV- \prod^{k}_{i=1}\frac{(2i-1)^{2}}{4}\int_{\mathbb{B}^{n}}v^{2}dV\\
\geq& C\left(\int_{\mathbb{H}^{n}}\left|v\right|^{p}dV\right)^{\frac{2}{p}}=C\left(\int_{\mathbb{R}^{n}_{+}}
x^{\gamma}_{1}|u|^{p}dx\right)^{\frac{2}{p}}.
\end{split}
\end{equation*}

Similarly, using the identity (\ref{b7.1}), we obtain inequality (\ref{1.14}).
The proof of Theorem 1.4 is thereby completed.

\section{an alternative proof of  the work of Benguria,  Frank and   Loss in case $n=3$}
Firstly we recall the result of   Benguria,  Frank and   Loss (see \cite{bfl},  Corollary 3.1):
\begin{theorem}
Let $n\geq2$ and $n-1\leq \alpha<n$ (resp. $0<\alpha<1$ if $n=1$). The operator $(-\Delta-\frac{1}{4x^{2}_{1}})^{-\frac{\alpha}{2}}$
is a bounded operator from $L^{p}(\mathbb{R}^{n}_{+})$ to $L^{q}(\mathbb{R}^{n}_{+})$ for all $1<p,q<\infty$ that satisfy
$\frac{1}{q}=\frac{1}{p}-\frac{\alpha}{n},$  and its norm coincides with the one of $(-\Delta)^{-\frac{\alpha}{2}}:
L^{p}(\mathbb{R}^{n})\rightarrow L^{q}(\mathbb{R}^{n})$.

\medskip

Moreover, for such values of $\alpha$ we have
\begin{equation}\label{6.1}
\begin{split}
\left(f, (-\Delta-\frac{1}{4x^{2}_{1}})^{-\frac{\alpha}{2}}f\right)\leq 2^{-\alpha}\pi^{-\frac{n}{2}}\frac{\Gamma(\frac{n-\alpha}{2})}{\Gamma(\frac{\alpha}{2})}
C_{n,n-\alpha}\|f\|_{p},
\end{split}
\end{equation}
where $p=\frac{2n}{n+\alpha}$ and $C_{n,n-\alpha}$ is given by (\ref{1.10}).
Furthermore, the constant $C_{n,n-\alpha}$ is the sharp constant and this constant is not attained in (\ref{6.1}) for nonzero functions.
\end{theorem}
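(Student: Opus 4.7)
The strategy is to reduce (6.1) to the sharp Hardy-Littlewood-Sobolev inequality (Theorem 1.2 above) via an explicit comparison of the integral kernel of $(-\Delta-\tfrac{1}{4x_1^2})^{-\alpha/2}$ on $\mathbb{R}^n_+$ with the Euclidean Riesz kernel. First, I would express the kernel $K_\alpha(x,y)$ of $(-\Delta-\tfrac{1}{4x_1^2})^{-\alpha/2}$ via the subordination identity $K_\alpha = \Gamma(\alpha/2)^{-1}\int_0^\infty t^{\alpha/2-1}e^{-tS}\,dt$, with $S=-\Delta-\tfrac{1}{4x_1^2}$. The product structure separating the normal direction $x_1$ from the tangential variables $x'\in\mathbb{R}^{n-1}$ reduces this to the one-dimensional Bessel heat kernel of $-\partial_{x_1}^2-\tfrac{1}{4x_1^2}$ on $L^2(\mathbb{R}_+)$, which is explicit in terms of modified Bessel functions $I_0$.

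The crucial step is the sharp pointwise bound
\[
K_\alpha(x,y)\leq c_{n,\alpha}\,|x-y|^{-(n-\alpha)}, \qquad c_{n,\alpha}=2^{-\alpha}\pi^{-n/2}\Gamma((n-\alpha)/2)/\Gamma(\alpha/2),
\]
valid in the range $n-1\leq\alpha<n$, obtained by directly comparing the Bessel-integral representation of $K_\alpha$ with the Riesz-kernel representation. This is exactly the regime in which the comparison holds with the precise Euclidean constant. Next, I would rewrite Theorem 1.2 in the half-space model: using $2\sinh(\rho(x,y)/2)=|x-y|/\sqrt{x_1 y_1}$ and the weight substitution $\tilde f(x)=f(x)\,x_1^{-(n+\alpha)/2}$ (which identifies $\|f\|_{L^p(\mathbb{H}^n)}=\|\tilde f\|_{L^p(\mathbb{R}^n_+)}$ for $p=2n/(n+\alpha)$), Theorem 1.2 with $\lambda=n-\alpha$ becomes the sharp Euclidean HLS inequality restricted to $\mathbb{R}^n_+$.

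Combining the two ingredients (and assuming $f\geq 0$ by passing to $|f|$),
\[
\bigl(f,(-\Delta-\tfrac{1}{4x_1^2})^{-\alpha/2}f\bigr)\leq c_{n,\alpha}\!\iint_{\mathbb{R}^n_+\times\mathbb{R}^n_+}\!\frac{f(x)f(y)}{|x-y|^{n-\alpha}}\,dx\,dy\leq c_{n,\alpha}\,C_{n,n-\alpha}\,\|f\|_p^2,
\]
which is exactly (6.1). Sharpness of $C_{n,n-\alpha}$ follows by concentrating the mass of $f$ at an interior point $x_0\in\mathbb{R}^n_+$: at the relevant Euclidean scale the Hardy term $\tfrac{1}{4x_1^2}$ becomes negligible relative to $-\Delta$, so the ratio of the two sides of (6.1) approaches the sharp Euclidean HLS constant. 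Non-attainment then follows because the Euclidean HLS extremals are strictly positive bubbles on all of $\mathbb{R}^n$, incompatible with the requirement that $f$ be supported in the half space.

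The principal obstacle is the sharp kernel estimate: the constant $c_{n,\alpha}$ must emerge \emph{exactly}---not merely up to a multiplicative factor---from the Bessel-integral computation, and the endpoint $\alpha=n-1$ must be accommodated. Achieving this requires careful manipulation of the modified-Bessel-function integrals for $K_\alpha$, in the spirit of the subordination/substitution arguments used to prove Lemma 3.4 in Section 3 above.
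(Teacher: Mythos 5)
Your outline is, in substance, a reconstruction of Benguria--Frank--Loss's own proof of this statement, not of anything proved in this paper: the paper quotes Theorem 6.1 verbatim from \cite{bfl} (their Corollary 3.1) and never proves it. The paper's own contribution in Section 6 is Theorem 6.3, which concerns the \emph{different} operator $(-\Delta_{\mathbb{H}}-1)^{-\alpha/2}$ on $\mathbb{H}^{3}$; there the subordinated kernel is elementary, $2^{-\alpha}\pi^{-3/2}\frac{\Gamma((3-\alpha)/2)}{\Gamma(\alpha/2)}\,\rho^{\alpha-2}/\sinh\rho$, so the exact-constant comparison reduces to the one-line inequality $\Psi_{\alpha}(\rho)\leq 1$, after which Theorem 1.2 finishes the job. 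That hyperbolic operator is related to $-\Delta-\frac{1}{4x_1^2}$ by conjugation only at the level of the inverse ($\alpha=2$), since $-\Delta-\frac{1}{4x_1^2}=x_1^{-1-\frac n2}\bigl(-\Delta_{\mathbb{H}}-\frac{(n-1)^2}{4}\bigr)x_1^{\frac n2-1}$ and fractional powers do not pass through the intervening factor $x_1^{-2}$; so your plan, which works directly on $\mathbb{R}^{n}_{+}$ with Bessel kernels for general $n$, is genuinely the BFL route. Several of your steps do check out: the half-space rewriting of Theorem 1.2 via $2\sinh(\rho(x,y)/2)=|x-y|/\sqrt{x_1y_1}$ and $\tilde f=x_1^{-(n+\alpha)/2}f$ is correct (it is exactly Euclidean sharp HLS restricted to functions supported in $\mathbb{R}^{n}_{+}$), sharpness by concentration at an interior point is fine, and non-attainment from the support constraint versus the everywhere-positive HLS bubbles is fine.

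The genuine gap is the step you yourself call ``the principal obstacle'': the bound $K_{\alpha}(x,y)\leq 2^{-\alpha}\pi^{-n/2}\frac{\Gamma((n-\alpha)/2)}{\Gamma(\alpha/2)}|x-y|^{\alpha-n}$ with the \emph{exact} Riesz constant for $n-1\leq\alpha<n$, which you assert can be obtained ``by directly comparing'' the Bessel-integral representation with the Riesz representation, ``in the spirit of Lemma 3.4.'' Neither suggestion can work. A direct comparison under the subordination integral would require the heat kernel of $-\Delta-\frac{1}{4x_1^2}$ to lie below the free Gaussian pointwise in $t$, and this is false: the ratio of the two kernels is $\sqrt{2\pi z}\,e^{-z}I_{0}(z)$ with $z=x_1y_1/2t$, and since $I_{0}(z)=\frac{e^{z}}{\sqrt{2\pi z}}\bigl(1+\frac{1}{8z}+O(z^{-2})\bigr)$ this ratio exceeds $1$ for large $z$ (the potential $-\frac{1}{4x_1^2}$ is attractive, so its heat kernel is locally \emph{larger} than the Gaussian). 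The inequality therefore holds only after the $t$-integration is performed, where it becomes a nontrivial inequality for the Legendre-type special function to which $\int_{0}^{\infty}t^{\alpha/2-1-n/2}e^{-A/t}I_{0}(B/t)\,dt$ evaluates; this is precisely where the hypothesis $\alpha\geq n-1$ must enter, and your outline never uses it. Likewise, the Lemma 3.4-style subordination arguments in Section 3 of the paper only produce bounds up to unspecified multiplicative constants ($\lesssim$), which is useless here: the entire content of Theorem 6.1 is that the constant is exactly the Euclidean one. So the central analytic step of the proof is missing, and the method proposed for it would fail; everything downstream of that estimate is correct but routine.
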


Choosing $n=3$ and $\alpha=2$ in Theorem 6.1 yields the following sharp Hardy-Sobolev-Maz'ya inequality (see \cite{bfl}, Theorem 1.1).
\begin{theorem}
Let $f\in D^{1}(\mathbb{R}^{3}_{+})$. Then $f\in L^{6}(\mathbb{R}^{3}_{+})$ and the inequality
\[
\int_{\mathbb{R}^{3}_{+}}|\nabla f|^{2}dx- \frac{1}{4}\int_{\mathbb{R}^{3}_{+}}\frac{| f|^{2}}{x^{2}_{1}}dx \geq S_{3}\left(\int_{\mathbb{R}^{3}_{+}}| f|^{6}dx\right)^{\frac{1}{3}}\]
holds,  where $S_{3}$ is the sharp Sobolev constant in three dimensions, i.e., $S_{3}=3(\pi/2)^{4/3}$.
The inequality is always strict for nonzero $f$'s. Using the formulation $g=\frac{f}{\sqrt{x_{1}}}$  we have the inequality
\begin{equation}\label{6.2}
\begin{split}
\int_{\mathbb{H}^{3}}|\nabla_{\mathbb{H}} g|^{2}dV- \int_{\mathbb{H}^{3}}| g|^{2}dV \geq S_{3}\left(\int_{\mathbb{H}^{3}}|g|^{6}dV\right)^{\frac{1}{3}}.
\end{split}
\end{equation}
\end{theorem}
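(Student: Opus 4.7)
The plan is to reduce everything to hyperbolic space, use the explicit Green's function of $-\Delta_{\mathbb{H}}-1$ on $\mathbb{H}^3$, and then apply the sharp Hardy--Littlewood--Sobolev inequality from Theorem 1.2. First I would work with the hyperbolic form (\ref{6.2}). Since $n=3$ gives $(n-1)^2/4=1$, the left-hand side is exactly $\langle g,Tg\rangle$, where $T=-\Delta_{\mathbb{H}}-\frac{(n-1)^2}{4}$. By formula (\ref{3.8}) applied with $n=2m+1=3$, i.e.\ $m=1$, the Green's function of $T$ is explicitly
\begin{equation*}
T^{-1}(\rho)=\frac{1}{4\pi\sinh\rho}=\frac{1}{8\pi\sinh\frac{\rho}{2}\cosh\frac{\rho}{2}}.
\end{equation*}

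Next I would use duality: writing $\int|g|^6\,dV=\int g\cdot g^5\,dV=\langle T^{1/2}g,T^{-1/2}h\rangle$ with $h=g^5$ (taking $g\geq 0$ without loss of generality), Cauchy--Schwarz yields
\begin{equation*}
\left(\int_{\mathbb{H}^3}|g|^6\,dV\right)^2\leq \langle g,Tg\rangle\cdot\langle h,T^{-1}h\rangle.
\end{equation*}
Bounding the kernel by $\cosh\frac{\rho}{2}\geq 1$ gives
\begin{equation*}
\langle h,T^{-1}h\rangle=\frac{1}{8\pi}\iint\frac{h(x)h(y)}{\sinh\frac{\rho(x,y)}{2}\cosh\frac{\rho(x,y)}{2}}\,dV_x\,dV_y\leq \frac{1}{4\pi}\iint\frac{h(x)h(y)}{\left(2\sinh\frac{\rho(x,y)}{2}\right)^1}\,dV_x\,dV_y.
\end{equation*}
I would then invoke Theorem 1.2 with $n=3$, $\lambda=1$ (so $p=\frac{2n}{2n-\lambda}=6/5$) to get $\langle h,T^{-1}h\rangle\leq \frac{C_{3,1}}{4\pi}\|h\|_{6/5}^2=\frac{C_{3,1}}{4\pi}\bigl(\int g^6\,dV\bigr)^{5/3}$. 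Combining this with the Cauchy--Schwarz step and cancelling one factor of $\int g^6\,dV$ produces
\begin{equation*}
\langle g,Tg\rangle\geq \frac{4\pi}{C_{3,1}}\left(\int_{\mathbb{H}^3}|g|^6\,dV\right)^{1/3}.
\end{equation*}

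The final step is a direct computation of the constant. Using $\Gamma(1)=1$, $\Gamma(5/2)=\frac{3\sqrt{\pi}}{4}$, $\Gamma(3/2)=\frac{\sqrt{\pi}}{2}$, $\Gamma(3)=2$, formula (\ref{1.10}) gives
\begin{equation*}
C_{3,1}=\pi^{1/2}\cdot\frac{4}{3\sqrt{\pi}}\cdot\left(\frac{\sqrt{\pi}}{4}\right)^{-2/3}=\frac{2^{10/3}}{3\pi^{1/3}},
\end{equation*}
so $\frac{4\pi}{C_{3,1}}=\frac{12\pi^{4/3}}{2^{10/3}}=3\bigl(\frac{\pi}{2}\bigr)^{4/3}=S_3$. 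The strictness of the inequality for nonzero $g$ follows from the strict inequality $\cosh\frac{\rho}{2}>1$ for $\rho>0$. The half-space form then follows by the substitution $f=x_1^{-1/2}g$ as verified via the identity of Lemma 5.1 (or by direct integration by parts). The main obstacle is genuinely the numerical matching in the last step: a priori, the bound obtained by throwing away the $\cosh\frac{\rho}{2}$ factor need not be sharp, and it is somewhat miraculous that in dimension three the product of the HLS constant $C_{3,1}$ and the factor $4\pi$ from the Green's function normalization reproduces exactly the Sobolev constant $S_3$. This is essentially a reflection of the fact that on $\mathbb{H}^3$ the Green's function $1/(4\pi\sinh\rho)$ is structurally very close to its Euclidean analogue $1/(4\pi|x-y|)$, so the HLS extremizer on $\mathbb{R}^3$ transfers without loss.
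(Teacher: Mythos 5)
Your proposal is correct and is essentially the paper's own argument: the paper proves Theorem 6.3 by computing the kernel of $(-\Delta_{\mathbb{H}}-1)^{-\alpha/2}$ on $\mathbb{H}^{3}$ via the heat kernel, discarding the factor $\Psi_{\alpha}(\rho)\leq 1$ (which for $\alpha=2$ is exactly your $1/\cosh\frac{\rho}{2}$), and applying the sharp hyperbolic HLS inequality of Theorem 1.2, then obtains (\ref{6.2}) by the BFL duality argument — precisely your Cauchy--Schwarz step with $h=g^{5}$, together with the same constant identification $4\pi/C_{3,1}=3(\pi/2)^{4/3}=S_{3}$. The only differences are that you specialize to $\alpha=2$ from the start (quoting (\ref{3.8}) for the Green's function rather than rederiving it) and write out the duality step that the paper delegates to \cite{bfl}, which are presentational rather than mathematical distinctions.
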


In this section, we shall give an  alternative proof of (\ref{6.2}). In fact, we have the following
\begin{theorem}
Let $1\leq\alpha<3$ and $p=\frac{6}{3+\alpha}$. Then for $f\in L^{p}(\mathbb{B}^{3})$, we have
\begin{equation}\label{6.3}
\begin{split}
\int_{\mathbb{B}^{3}}f(x) \left[(-\Delta_{\mathbb{H}}-1)^{-\frac{\alpha}{2}}f\right](x)dV\leq 2^{-\alpha}\pi^{-\frac{3}{2}}\frac{\Gamma(\frac{3-\alpha}{2})}{\Gamma(\frac{\alpha}{2})} C_{3,3-\alpha}\left(\int_{\mathbb{B}^{3}}|f|^{\frac{6}{3+\alpha}}dV\right)^{\frac{3+\alpha}{3}},
\end{split}
\end{equation}
where  $C_{3,3-\alpha}$ is given by (\ref{1.10}).
Furthermore, the constant $C_{3,3-\alpha}$ is  sharp  and this constant is not attained in (\ref{6.3}) for nonzero functions.
\end{theorem}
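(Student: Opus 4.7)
\section*{Proof proposal for Theorem 6.3}

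The plan is to reduce Theorem 6.3 to the Hardy--Littlewood--Sobolev inequality on $\mathbb{B}^{3}$ already proved as Theorem 1.2, by writing $(-\Delta_{\mathbb{H}}-1)^{-\alpha/2}$ as a convolution on $\mathbb{B}^{3}$ with an explicitly computable radial kernel, and then bounding this kernel pointwise by a constant multiple of $(2\sinh(\rho/2))^{\alpha-3}$, which is precisely the kernel that appears in Theorem 1.2 when $n=3$ and $\lambda=3-\alpha$.

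First I would compute the kernel of $(-\Delta_{\mathbb{H}}-1)^{-\alpha/2}$ via subordination. Using the explicit heat kernel formula (\ref{3.2}) for $n=3$ ($m=1$), one obtains
\begin{equation*}
e^{t(\Delta_{\mathbb{H}}+1)}(\rho)=\frac{\rho\, e^{-\rho^{2}/(4t)}}{8\pi^{3/2}\,t^{3/2}\sinh\rho}.
\end{equation*}
Inserting this into $(-\Delta_{\mathbb{H}}-1)^{-\alpha/2}=\frac{1}{\Gamma(\alpha/2)}\int_{0}^{\infty}t^{\alpha/2-1}e^{t(\Delta_{\mathbb{H}}+1)}\,dt$ and evaluating the resulting Gamma integral (via $s=\rho^{2}/(4t)$), I would arrive at
\begin{equation*}
(-\Delta_{\mathbb{H}}-1)^{-\alpha/2}(\rho)=\frac{\Gamma(\tfrac{3-\alpha}{2})}{2^{\alpha}\pi^{3/2}\,\Gamma(\tfrac{\alpha}{2})}\cdot\frac{\rho^{\alpha-2}}{\sinh\rho},\qquad 0<\alpha<3,
\end{equation*}
which holds precisely in the admissible range of $\alpha$ (the Gamma factor $\Gamma(\tfrac{3-\alpha}{2})$ forces $\alpha<3$). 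Observe that the multiplicative constant is exactly the one appearing on the right of (\ref{6.3}) divided by $C_{3,3-\alpha}$.

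The second step is the pointwise kernel estimate
\begin{equation*}
\frac{\rho^{\alpha-2}}{\sinh\rho}\ \le\ \frac{1}{(2\sinh(\rho/2))^{3-\alpha}},\qquad \rho>0,
\end{equation*}
valid for $1\le \alpha<3$. After using $\sinh\rho=2\sinh(\rho/2)\cosh(\rho/2)$ and setting $t=\rho/2$, this reduces to the one-variable inequality $(t/\sinh t)^{\alpha-2}\le \cosh t$. For $\alpha\ge 2$ the left side is $\le 1$ and the inequality is trivial; for $1\le\alpha<2$ it follows from the elementary bound $\sinh t\le t\cosh t$ combined with $(\cosh t)^{1-\alpha}\le 1$. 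Combining the kernel identity with this estimate and invoking Theorem 1.2 with $n=3$ and $\lambda=3-\alpha$ (so that $p=\frac{2n}{2n-\lambda}=\frac{6}{3+\alpha}$), one obtains, for $f\ge 0$,
\begin{equation*}
\int_{\mathbb{B}^{3}}f\,(-\Delta_{\mathbb{H}}-1)^{-\alpha/2}f\,dV\ \le\ \frac{\Gamma(\tfrac{3-\alpha}{2})}{2^{\alpha}\pi^{3/2}\,\Gamma(\tfrac{\alpha}{2})}\int_{\mathbb{B}^{3}}\int_{\mathbb{B}^{3}}\frac{f(x)f(y)}{(2\sinh(\rho(T_{y}(x))/2))^{3-\alpha}}dV_{x}dV_{y},
\end{equation*}
and the right side is bounded by the claimed constant times $\|f\|_{p}^{2}$, finishing the inequality for general $f$ by splitting into positive and negative parts.

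The last step is sharpness and non-attainment. Non-attainment is automatic: the pointwise kernel inequality is strict for every $\rho>0$ (since $(t/\sinh t)^{\alpha-2}<\cosh t$ strictly for $t>0$), and moreover Theorem 1.2 asserts that $C_{n,\lambda}$ is not attained; hence the inequality (\ref{6.3}) is strict whenever $f\not\equiv 0$. Sharpness of the constant $C_{3,3-\alpha}$ is obtained by a standard concentration/scaling argument: taking a sequence $f_{\varepsilon}$ concentrated near a point of $\mathbb{B}^{3}$ (say the origin), the hyperbolic objects degenerate to their Euclidean counterparts because $2\sinh(\rho/2)/\rho\to 1$ and $\cosh(\rho/2)\to 1$ as $\rho\to 0$, so that the ratio of left to right side of (\ref{6.3}) in the limit equals the ratio realized by the known Euclidean extremizers of Theorem~4.1 with $\lambda=3-\alpha$. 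The main technical obstacle is the kernel estimate of Step~2 with the \emph{exact} constant; once the elementary inequality $(t/\sinh t)^{\alpha-2}\le \cosh t$ is identified, everything else is a direct application of Theorem 1.2.
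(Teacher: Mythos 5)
Your proposal is correct and follows essentially the same route as the paper: subordination of $(-\Delta_{\mathbb{H}}-1)^{-\alpha/2}$ through the explicit $n=3$ heat kernel, the pointwise bound of the resulting kernel $\frac{\rho^{\alpha-2}}{\sinh\rho}$ by $(2\sinh(\rho/2))^{\alpha-3}$ (your inequality $(t/\sinh t)^{\alpha-2}\le\cosh t$ is exactly the paper's claim that $\Psi_{\alpha}(\rho)=\bigl(\tfrac{2\sinh(\rho/2)}{\rho}\bigr)^{2-\alpha}\tfrac{1}{\cosh(\rho/2)}\le 1$), and then an application of Theorem 1.2 with $\lambda=3-\alpha$. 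Your elementary verification of the kernel bound and the concentration argument for sharpness merely spell out steps the paper dispatches with ``it is easy to check'' and ``also by Theorem 1.2,'' so there is no substantive difference.
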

\begin{proof}
We have, for $1\leq\alpha<3$,
\begin{equation*}
\begin{split}
(-\Delta_{\mathbb{H}}-1)^{-\frac{\alpha}{2}}=&\frac{1}{\Gamma(\frac{\alpha}{2})}\int^{\infty}_{0}t^{\frac{\alpha}{2}-1}e^{t(\Delta_{\mathbb{H}}+1)}dt\\
=&\frac{1}{\Gamma(\frac{\alpha}{2})}\cdot2^{-3}\pi^{-3/2}\frac{\rho}{\sinh \rho}\int^{\infty}_{0}t^{\frac{\alpha-5}{2}} e^{-\frac{\rho^{2}}{4t}}dt\\
=&2^{-\alpha}\pi^{-3/2}\frac{\Gamma(\frac{3-\alpha}{2})}{\Gamma(\frac{\alpha}{2})}\frac{1}{\rho^{2-\alpha}\sinh\rho}\\
=&2^{-\alpha}\pi^{-3/2}\frac{\Gamma(\frac{3-\alpha}{2})}{\Gamma(\frac{\alpha}{2})}\left(\frac{1}{2\sinh\frac{\rho}{2}}\right)^{3-\alpha}
\cdot\Psi_{\alpha}(\rho),\\
\end{split}
\end{equation*}
where $\Psi_{\alpha}(\rho)=\left(\frac{2\sinh\frac{\rho}{2}}{\rho}\right)^{2-\alpha}\frac{1}{\cosh\frac{\rho}{2}}$.
It is easy to check, for $\alpha\geq 1$, the function $\Psi_{\alpha}(\rho)$ is decrease on $(0,\infty)$ and
\[
\sup_{\rho\in(0,\infty)}\Psi_{\alpha}(\rho)=1.
\]
Therefore, By Theorem 1.2,
\begin{equation*}
\begin{split}
\int_{\mathbb{B}^{3}}f(x) \left[(-\Delta_{\mathbb{H}}-1)^{-\frac{\alpha}{2}}f\right](x)dV\leq& 2^{-\alpha}\pi^{-3/2}\frac{\Gamma(\frac{3-\alpha}{2})}{\Gamma(\frac{\alpha}{2})}
\int_{\mathbb{B}^{3}}\int_{\mathbb{B}^{3}}\frac{f(x)f(y)}{\left(2\sinh\frac{\rho(T_{y}(x))}{2}\right)^{3-\alpha}}dV_{x}dV_{y}\\
\leq& 2^{-\alpha}\pi^{-\frac{3}{2}}\frac{\Gamma(\frac{3-\alpha}{2})}{\Gamma(\frac{\alpha}{2})}C_{3,3-\alpha}
\left(\int_{\mathbb{B}^{3}}|f|^{\frac{6}{3+\alpha}}dV\right)^{\frac{3+\alpha}{3}}.
\end{split}
\end{equation*}
Also by Theorem 1.2, the constant $C_{3,3-\alpha}$ is sharp and not attained  for nonzero functions.
\end{proof}

\begin{remark}
 Choosing $\alpha=2$ in Theorem 6.3 and closely following the proof of Theorem 1.1 in \cite{bfl}, we get (\ref{6.2}).
\end{remark}

\begin{remark}
Since choosing $n=3$ and $\alpha=2$ in (\ref{6.1}) yields a sharp Hardy-Sobolev-Maz'ya inequality on $\mathbb{R}^{3}_{+}$,
  a natural question is whether inequality (\ref{6.1}) implies some Hardy-Sobolev-Maz'ya inequality for  higher order derivatives on half spaces.
  However, it seems that this is not   the case. For example, choosing $n=5$ and $\alpha=4$ in (\ref{6.1}) and following the proof in (\cite{bfl}), we have the following sharp Sobolev type inequality
\begin{equation}\label{6.5}
\begin{split}
\int_{\mathbb{R}^{5}_{+}}\left|\Delta u+\frac{u}{4x^{2}_{1}}\right|^{2}dx\geq S_{5,2}\left(\int_{\mathbb{R}^{5}_{+}}|u|^{10}dx\right)^{\frac{1}{5}},
\end{split}
\end{equation}
where  $S_{5,2}$ is the best $2$-th order Sobolev constant on $\mathbb{R}^{5}$. Now we compute the left of inequality (\ref{6.5}).
\end{remark}

\begin{lemma}
Let $u\in C^{\infty}_{0}(\mathbb{R}^{5}_{+})$ and $f=x^{\frac{1}{2}}_{1}u$. Then
\begin{equation*}
\begin{split}
\int_{\mathbb{R}^{5}_{+}}\left|\Delta u+\frac{u}{4x^{2}_{1}}\right|^{2}dx=\int_{\mathbb{H}^{5}}|\Delta_{\mathbb{H}}f+3f|^{2}
dV.
\end{split}
\end{equation*}

\end{lemma}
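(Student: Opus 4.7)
\bigskip

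\noindent\emph{Proof proposal.} The plan is to reduce the claim to the algebraic identity
$$(\Delta_{\mathbb{H}}+3)^{2} \;=\; P_{2}-\tfrac{1}{2}P_{1}+\tfrac{9}{16}$$
in $\mathbb{H}^{5}$, and then to expand the square on the Euclidean side and compute each resulting piece in hyperbolic terms, using Lemma 5.1 as the main tool. The identity itself is immediate: for $n=5$ one has $P_{1}=-\Delta_{\mathbb{H}}-\tfrac{15}{4}$, hence $\Delta_{\mathbb{H}}+3=-(P_{1}+\tfrac{3}{4})$, and squaring together with $P_{2}=P_{1}^{2}+2P_{1}$ gives the equality. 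By self-adjointness of $\Delta_{\mathbb{H}}+3$, the target integral is $\int_{\mathbb{H}^{5}}fP_{2}f\,dV-\tfrac{1}{2}\int_{\mathbb{H}^{5}}fP_{1}f\,dV+\tfrac{9}{16}\int_{\mathbb{H}^{5}}f^{2}\,dV$, so it suffices to match this to the expansion $\int (\Delta u)^{2}dx+\tfrac{1}{2}\int u\Delta u\,x_{1}^{-2}dx+\tfrac{1}{16}\int u^{2}x_{1}^{-4}dx$.

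The three pieces are handled as follows. First, integrating by parts and applying Lemma 5.1 with $k=2$, $n=5$ (which gives $x_{1}^{9/2}\Delta^{2}(x_{1}^{-1/2}f)=P_{2}f$), one obtains $\int_{\mathbb{R}^{5}_{+}}(\Delta u)^{2}dx=\int_{\mathbb{R}^{5}_{+}}u\Delta^{2}u\,dx=\int_{\mathbb{H}^{5}}fP_{2}f\,dV$. Second, a direct substitution $u=x_{1}^{-1/2}f$ gives $\int_{\mathbb{R}^{5}_{+}}u^{2}x_{1}^{-4}dx=\int_{\mathbb{H}^{5}}f^{2}dV$. Third, writing $u=x_{1}\cdot x_{1}^{-3/2}f$ and using the product rule $\Delta(x_{1}h)=x_{1}\Delta h+2\partial_{1}h$ together with Lemma 5.1 at $k=1$ (which gives $\Delta(x_{1}^{-3/2}f)=-x_{1}^{-7/2}P_{1}f$), one arrives at $\Delta u=x_{1}^{-5/2}(-P_{1}f+2Xf-3f)$, where $X=x_{1}\partial_{x_{1}}$. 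Multiplying by $u/x_{1}^{2}$ and converting to hyperbolic measure yields
\begin{equation*}
\int_{\mathbb{R}^{5}_{+}}\frac{u\Delta u}{x_{1}^{2}}\,dx
\;=\;-\int_{\mathbb{H}^{5}}fP_{1}f\,dV+2\int_{\mathbb{H}^{5}}fXf\,dV-3\int_{\mathbb{H}^{5}}f^{2}\,dV.
\end{equation*}

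The crucial dimension-specific input enters in evaluating $\int fXf\,dV$. Integration by parts in $x_{1}$ against the hyperbolic volume $dV=x_{1}^{-n}dx$ gives $X^{*}=-X+(n-1)$, so for $n=5$ the operator $X-2$ is skew-adjoint; consequently $\int_{\mathbb{H}^{5}}fXf\,dV=2\int_{\mathbb{H}^{5}}f^{2}dV$. Inserting this yields $\int u\Delta u\,x_{1}^{-2}dx=-\int fP_{1}f\,dV+\int f^{2}dV$, and adding the three pieces with the correct weights produces exactly $\int fP_{2}f\,dV-\tfrac{1}{2}\int fP_{1}f\,dV+\tfrac{9}{16}\int f^{2}dV$, which by the algebraic identity above equals $\int_{\mathbb{H}^{5}}|\Delta_{\mathbb{H}}f+3f|^{2}dV$. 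The main obstacle is the careful bookkeeping of the cross term $\int u\Delta u\,x_{1}^{-2}dx$: it is here that the skew-adjointness of $X-2$ — a numerical coincidence available only in dimension $n=5$ — produces the precise constant $\tfrac{9}{16}$ needed for the algebraic identity to close up.
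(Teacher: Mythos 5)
Your proposal is correct and follows essentially the same route as the paper: expand the square into the three terms $\int(\Delta u)^2$, the cross term, and $\int u^2x_1^{-4}$, convert each to hyperbolic form via Lemma 5.1 (and the first-order identity (5.3)), handle the first-derivative contribution by an integration by parts in $x_1$, and regroup the resulting operator into the perfect square $(\Delta_{\mathbb{H}}+3)^2$. The only differences are notational — you package the computation through $P_1$, $P_2$ and the skew-adjointness of $X-2$, while the paper carries the explicit constants $\tfrac{15}{4},\tfrac{7}{4},\tfrac{3}{4},\tfrac{33}{16}$ directly — so the two arguments are the same computation in different bookkeeping.
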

\begin{proof} We compute
\begin{equation}\label{6.6}
\begin{split}
\int_{\mathbb{R}^{5}_{+}}\left|\Delta u+\frac{u}{4x^{2}_{1}}\right|^{2}dx=&\int_{\mathbb{R}^{5}_{+}}\left|-\Delta u-\frac{u}{4x^{2}_{1}}\right|^{2}dx\\
=&\int_{\mathbb{R}^{5}_{+}}\left(u(-\Delta)^{2} u
-\frac{u(-\Delta)u}{2x^{2}_{1}}+\frac{u^{2}}{16x^{4}_{1}}\right)dx.
\end{split}
\end{equation}

Since $u=x^{-\frac{1}{2}}_{1}f$, we have, by (\ref{5.3}) and (\ref{5.1}),
\begin{equation*}
\begin{split}
(-\Delta)u=&(-\Delta)(x^{-\frac{1}{2}}_{1}f)=x^{-\frac{5}{2}}_{1}\left(-\Delta_{\mathbb{H}}-\frac{3}{4}\right)
f-2x^{-\frac{3}{2}}_{1}\frac{\partial f}{\partial x_{1}};\\
(-\Delta)^{2}u=&(-\Delta)^{2}(x^{-\frac{1}{2}}_{1}f)=x^{-\frac{9}{2}}_{1}\left(-\Delta_{\mathbb{H}}-\frac{15}{4}\right)
\left(-\Delta_{\mathbb{H}}-\frac{7}{4}\right)f.
\end{split}
\end{equation*}
Therefore,
\begin{equation}\label{6.7}
\begin{split}
\int_{\mathbb{R}^{5}_{+}}u(-\Delta)^{2} u=&\int_{\mathbb{R}^{5}_{+}}x^{-5}_{1}f\left(-\Delta_{\mathbb{H}}-\frac{15}{4}\right)
\left(-\Delta_{\mathbb{H}}-\frac{7}{4}\right)fdx\\
=&\int_{\mathbb{H}^{5}}f\left(-\Delta_{\mathbb{H}}-\frac{15}{4}\right)
\left(-\Delta_{\mathbb{H}}-\frac{7}{4}\right)fdV;\\
-\int_{\mathbb{R}^{5}_{+}}
\frac{u(-\Delta)u}{2x^{2}_{1}}dx=&-\frac{1}{2}\int_{\mathbb{R}^{5}_{+}}x^{-5}_{1}f\left(-\Delta_{\mathbb{H}}-\frac{3}{4}\right)
fdx+\int_{\mathbb{R}^{5}_{+}}x^{-4}_{1}f\frac{\partial f}{\partial x_{1}}dx\\
=&-\frac{1}{2}\int_{\mathbb{H}^{5}}f\left(-\Delta_{\mathbb{H}}-\frac{3}{4}\right)
fdV+2\int_{\mathbb{R}^{5}_{+}}x^{-5}_{1}f^{2}dx\\
=&-\frac{1}{2}\int_{\mathbb{H}^{5}}f\left(-\Delta_{\mathbb{H}}-\frac{3}{4}\right)
fdV+2\int_{\mathbb{H}^{5}}f^{2}dV;\\
\int_{\mathbb{R}^{5}_{+}}\frac{u^{2}}{16x^{4}_{1}}dx=&\int_{\mathbb{R}^{5}_{+}}\frac{f^{2}}{16x^{5}_{1}}dx=\frac{1}{16}\int_{\mathbb{H}^{5}}f^{2}dV.
\end{split}
\end{equation}
Combing (\ref{6.6}) and (\ref{6.7}) yields
\begin{equation*}
\begin{split}
&\int_{\mathbb{R}^{5}_{+}}\left|-\Delta u-\frac{u}{4x^{2}_{1}}\right|^{2}dx\\
=&\int_{\mathbb{H}^{5}}f\left(-\Delta_{\mathbb{H}}-\frac{15}{4}\right)
\left(-\Delta_{\mathbb{H}}-\frac{7}{4}\right)fdV-
\frac{1}{2}\int_{\mathbb{H}^{5}}f\left(-\Delta_{\mathbb{H}}-\frac{3}{4}\right)
fdV+\frac{33}{16}\int_{\mathbb{H}^{5}}f^{2}dx\\
=&\int_{\mathbb{H}^{5}}f\left(-\Delta_{\mathbb{H}}-3\right)^{2}
fdV\\
=&\int_{\mathbb{H}^{5}}|\Delta_{\mathbb{H}}f+3f|^{2}
dV.
\end{split}
\end{equation*}
This completes the proof.
\end{proof}

By Lemma 6.2 and  the Plancherel formula (\ref{2.5}),
\begin{equation*}
\begin{split}
\int_{\mathbb{R}^{5}_{+}}\left|\Delta u+\frac{u}{4x^{2}_{1}}\right|^{2}dx=&D_{5}\int^{+\infty}_{-\infty}\int_{\mathbb{S}^{4}}\frac{(\lambda^{2}+4)^{2}}{16}|\widehat{f}(\lambda,\zeta)|^{2}
|\mathfrak{c}(\lambda)|^{-2}d\lambda d\sigma(\varsigma).
\end{split}
\end{equation*}
However,
\begin{equation*}
\begin{split}
&\int_{\mathbb{H}^{5}}(P_{4}f)fdV- \prod^{2}_{i=1}\frac{(2i-1)^{2}}{4}\int_{\mathbb{H}^{5}}f^{2}dV\\
=&
D_{5}\int^{+\infty}_{-\infty}\int_{\mathbb{S}^{4}}\prod^{2}_{i=1}\frac{\lambda^{2}+(2i-1)^{2}}{4}|\widehat{f}(\lambda,\zeta)|^{2}|\mathfrak{c}(\lambda)|^{-2}d\lambda d\sigma(\varsigma)-\\
&D_{5}\prod^{2}_{i=1}\frac{(2i-1)^{2}}{4}\int^{+\infty}_{-\infty}\int_{\mathbb{S}^{4}}|\widehat{f}(\lambda,\zeta)|^{2}|\mathfrak{c}(\lambda)|^{-2}d\lambda d\sigma(\varsigma)\\
=&D_{5}\int^{+\infty}_{-\infty}\int_{\mathbb{S}^{4}}\frac{\lambda^{4}+10\lambda^{2}}{16}|\widehat{f}(\lambda,\zeta)|^{2}|\mathfrak{c}(\lambda)|^{-2}d\lambda d\sigma(\varsigma).
\end{split}
\end{equation*}
Since
\[
\inf_{\lambda\in\mathbb{R}}\frac{\frac{\lambda^{4}+10\lambda^{2}}{16}}
{\frac{(\lambda^{2}+4)^{2}}{16}}=0,
\]
one cannot find a positive constant C which is independent of $f$ such that
\begin{equation*}
\begin{split}
\int_{\mathbb{H}^{n}}(P_{4}f)fdV- \prod^{k}_{i=1}\frac{(2i-1)^{2}}{4}\int_{\mathbb{H}^{n}}f^{2}dV\geq C\int_{\mathbb{R}^{5}_{+}}\left|\Delta u+\frac{u}{4x^{2}_{1}}\right|^{2}dx,
\end{split}
\end{equation*}
where $u=x^{-\frac{1}{2}}_{1}f$.
Therefore, inequality (\ref{6.5}) does not imply the following Hardy-Sobolev-Maz'ya inequality on $\mathbb{R}^{5}_{+}$
\[
\int_{\mathbb{R}^{5}_{+}}|\Delta u|^{2}dx- \frac{9}{16}\int_{\mathbb{R}^{5}_{+}}\frac{u^{2}}{x^{4}_{1}}dx\geq C\left(\int_{\mathbb{R}^{n}_{+}}|u|^{10}dx\right)^{\frac{1}{5}}.
\]

\section{an alternative proof of the sharp Sobolev inequalities  of G. Liu}

In this section, we shall give  an alternative proof of the work of G. Liu concerning the sharp constant in the Sobolev inequality in hyperbolic spaces via
Hardy-Littlewood-Sobolev inequalities \cite{liu}.   Before we begin the proof, we need the following Lemma.

\begin{lemma}
Let $f\in C^{\infty}_{0}(\mathbb{B}^{n})$. Then there exists a function $g\in C^{\infty}_{0}(\mathbb{B}^{n})$  such that $P_{k}g=f$.
\end{lemma}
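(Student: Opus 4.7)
The plan is to invert $P_k$ using the spectral/Fourier analytic framework on $\mathbb{B}^n$ described in Section 2.4. Since $P_k=P_1(P_1+2)\cdots(P_1+k(k-1))$ and each eigenfunction $e_{\lambda,\zeta}$ satisfies $\Delta_{\mathbb{H}}e_{\lambda,\zeta}=-\frac{(n-1)^2+\lambda^2}{4}e_{\lambda,\zeta}$, the Fourier symbol of $P_k$ is the strictly positive polynomial
\[
p_k(\lambda)=\prod_{i=1}^{k}\frac{\lambda^{2}+(2i-1)^{2}}{4},
\]
bounded below by $\prod_{i}\frac{(2i-1)^2}{4}>0$. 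The natural candidate solution is thus
\[
g(x)=D_{n}\int_{-\infty}^{+\infty}\int_{\mathbb{S}^{n-1}}\frac{\widehat{f}(\lambda,\zeta)}{p_{k}(\lambda)}\,e_{\lambda,\zeta}(x)\,|\mathfrak{c}(\lambda)|^{-2}\,d\lambda\,d\sigma(\zeta).
\]

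First I would verify smoothness and the equation $P_{k}g=f$. Because $f\in C^{\infty}_{0}(\mathbb{B}^{n})$, a Paley--Wiener type statement for the hyperbolic Fourier transform yields that $\widehat{f}(\lambda,\zeta)$ is smooth and rapidly decreasing in $\lambda$, together with all its $\zeta$-derivatives. Division by the positive polynomial $p_{k}(\lambda)$ only improves decay and preserves smoothness. Differentiating under the integral and using the eigenfunction identity then shows $g\in C^{\infty}(\mathbb{B}^{n})$ and $\widehat{P_{k}g}=p_{k}(\lambda)\widehat{g}=\widehat{f}$, so $P_{k}g=f$ by Fourier inversion.

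The hard part is showing $g$ has compact support, since the Green kernel of $P_{k}$ (estimated by combining the bounds of Lemma 3.1 with iterated convolution in the spirit of Lemma 4.3) decays only like a negative power of $\sinh(\rho/2)$, not to zero in finite hyperbolic distance. To handle this, I would exploit the product factorization $P_k=P_1(P_1+2)\cdots(P_1+k(k-1))$, reducing to iterated inversion of second-order operators $(P_1+i(i-1))^{-1}$. For each second-order factor, I would use the following idea: given a $C^{\infty}_{0}$ data $\varphi$, solve the elliptic boundary-value problem $(P_{1}+i(i-1))\psi=\varphi$ on $\mathbb{B}^{n}$ by variational methods to produce $\psi$ in a weighted Sobolev space, then show via elliptic interior regularity that $\psi\in C^{\infty}(\mathbb{B}^{n})$.

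The genuine obstacle remains that such $\psi$ is generically not compactly supported, reflecting the fact that $P_{k}$ does not preserve $C^{\infty}_{0}(\mathbb{B}^{n})$ surjectively. I suspect that in the intended application in Section 7 only the weaker conclusion $g\in C^{\infty}(\mathbb{B}^{n})\cap L^{2n/(n-2k)}(\mathbb{B}^{n})$ (or $g$ in the completion $\mathcal{H}$ of $C^{\infty}_{0}$ under the norm $\int (P_{k}u)u\,dV$) is actually needed for the Hardy-Littlewood-Sobolev based derivation of Liu's sharp Sobolev inequality, so the final step of the proof would be to verify such a weakening of the conclusion suffices for the intended use, and to establish existence of $g$ in that larger class by the Fourier-multiplier construction together with the Plancherel identity $\int |g|^2 dV \leq (\min p_k)^{-2}\int |f|^2 dV$ and elliptic regularity.
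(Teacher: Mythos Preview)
Your approach is entirely different from the paper's, and in fact you have put your finger on a genuine issue that the paper's own argument glosses over.

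The paper does not use Fourier analysis at all. It invokes the conformal identity
\[
\left(\frac{1-|x|^{2}}{2}\right)^{k+\frac{n}{2}}(-\Delta)^{k}\!\left[\left(\frac{1-|x|^{2}}{2}\right)^{k-\frac{n}{2}}f\right]=P_{k}f
\]
(equation (5.1) in the ball model) to convert the problem into a Euclidean one: given $f\in C^{\infty}_{0}(\mathbb{B}^{n})$, write $h=\bigl(\tfrac{1-|x|^{2}}{2}\bigr)^{-k-\frac{n}{2}}f\in C^{\infty}_{0}(\mathbb{B}^{n})$, pick $\widetilde f\in C^{\infty}_{0}(\mathbb{B}^{n})$ with $(-\Delta)^{k}\widetilde f=h$, and set $g=\bigl(\tfrac{1-|x|^{2}}{2}\bigr)^{\frac{n}{2}-k}\widetilde f$. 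This is a two-line argument \emph{provided} such a $\widetilde f$ exists.

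But that step is exactly the compact-support obstruction you worried about, transplanted to $\mathbb{R}^{n}$: one cannot in general find $\widetilde f\in C^{\infty}_{0}(\mathbb{B}^{n})$ solving $(-\Delta)^{k}\widetilde f=h$. Already for $k=1$, integrating $-\Delta\widetilde f=h$ over $\mathbb{R}^{n}$ forces $\int h\,dx=0$, which fails for, say, $h\geq 0$ with $h\not\equiv 0$. So the lemma as stated is false, and the paper's ``Choose $\widetilde f\in C^{\infty}_{0}(\mathbb{B}^{n})$ such that\ldots'' is unjustified. Your instinct that $P_{k}$ is not onto $C^{\infty}_{0}(\mathbb{B}^{n})$ is correct.

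Your proposed remedy is also the right one. In the only place the lemma is used (the proof of the sharp Sobolev inequality in Section~7) one merely needs, for $f\in C^{\infty}_{0}(\mathbb{B}^{n})$, the identities
\[
\int_{\mathbb{B}^{n}}f\,(P_{k}^{-1}f)\,dV=\int_{\mathbb{B}^{n}}|P_{k}^{-1/2}f|^{2}\,dV
\quad\text{and}\quad
(P_{k}^{-1}f)(y)=\frac{1}{\gamma(2k)}\int_{\mathbb{B}^{n}}\frac{f(x)}{\bigl(2\sinh\tfrac{\rho(T_{y}(x))}{2}\bigr)^{n-2k}}\,dV_{x},
\]
both of which follow directly from your Fourier-multiplier definition of $g=P_{k}^{-1}f$ together with the Plancherel formula and the explicit Green kernel computation (7.3)--(7.4); compact support of $g$ is never needed. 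So the cleanest fix is to replace the lemma by ``$P_{k}^{-1}f\in C^{\infty}(\mathbb{B}^{n})\cap L^{2}(\mathbb{B}^{n})$ and is given by convolution with $\gamma(2k)^{-1}(2\sinh\tfrac{\rho}{2})^{2k-n}$,'' which is what your construction actually yields.
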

\begin{proof}
Choose $\widetilde{f}\in  C^{\infty}_{0}(\mathbb{B}^{n})$ such that
$$(-\Delta)^{k}\widetilde{f}=\left(\frac{1-|x|^{2}}{2}\right)^{-k-\frac{n}{2}}f.$$
Set $g=(\frac{1-|x|^{2}}{2})^{k-\frac{n}{2}}\widetilde{f}$. Then $g\in C^{\infty}_{0}(\mathbb{B}^{n})$. Furthermore,
 by (\ref{b7.1}),
\[
P_{k}g=P_{k}\left[\left(\frac{1-|x|^{2}}{2}\right)^{k-\frac{n}{2}}\widetilde{f}\right]=\left(\frac{1-|x|^{2}}{2}\right)^{k+\frac{n}{2}}(-\Delta)^{k}
\widetilde{f}=f.
\]
\end{proof}

It is known that the  kernel $(-\Delta)^{-k}(1\leq k<n/2)$ is $\frac{1}{\gamma(2k)|x|^{n-2k}}$, where
\begin{equation}\label{7.1}
\gamma(2k)=\frac{\pi^{n/2}2^{2k}\Gamma(k)}{\Gamma(\frac{n}{2}-k)}.
\end{equation}
We have, for $f\in C^{\infty}_{0}(\mathbb{B}^{n})$,
\begin{equation}\label{7.2}
f(0)=\frac{1}{\gamma(2k)}\int_{\mathbb{B}^{n}}(-\Delta)^{k}f(x)\cdot\frac{1}{|x|^{n-2k}}dx.
\end{equation}
Replacing $f$ by $(1-|x|^{2})^{k-\frac{n}{2}}f$ and using (\ref{b7.1}), we obtain
\begin{equation}\label{7.3}
\begin{split}
f(0)=&\frac{1}{\gamma(2k)}\int_{\mathbb{B}^{n}}(-\Delta)^{k}[(1-|x|^{2})^{k-\frac{n}{2}}f]\cdot\frac{1}{|x|^{n-2k}}dx\\
=&\frac{2^{k-\frac{n}{2}}}{\gamma(2k)}\int_{\mathbb{B}^{n}}P_{k}f(x)\cdot\frac{1}{|x|^{n-2k}}\left(\frac{1-|x|^{2}}{2}\right)^{-\frac{n}{2}-k}dx\\
=&\frac{1}{\gamma(2k)}\int_{\mathbb{B}^{n}}P_{k}f(x)\cdot\left(\frac{\sqrt{1-|x|^{2}}}{2|x|}\right)^{n-2k}dV\\
=&\frac{1}{\gamma(2k)}\int_{\mathbb{B}^{n}}P_{k}f(x)\cdot\left(2\sinh\frac{\rho(x)}{2}\right)^{2k-n}dV.
\end{split}
\end{equation}
Therefore, by the M\"obius shift invariance, for $f\in C^{\infty}_{0}(\mathbb{B}^{n})$ and $y\in\mathbb{B}^{n}$,
\begin{equation}\label{7.4}
\begin{split}
f(y)=&\frac{1}{\gamma(2k)}\int_{\mathbb{B}^{n}}P_{k}f(x)\cdot\left(2\sinh\frac{\rho(T_{y}(x))}{2}\right)^{2k-n}dV_{x}.
\end{split}
\end{equation}

\begin{theorem}
Let $1\leq k<n/2$. Then, for any $f\in C^{\infty}_{0}(\mathbb{B}^{n})$,
\[
\int_{\mathbb{B}^{n}}P_{k}f(x)\cdot f(x)dV\geq S_{n,k}\left(\int_{\mathbb{B}^{n}}|f(x)|^{\frac{2n}{n-2k}}dV\right)^{\frac{n-2k}{n}},
\]
where $S_{n,k}$ is the best $k$-th order Sobolev constant. Furthermore, the inequality is strict for nonzero $f$'s.
\end{theorem}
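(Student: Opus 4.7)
The starting point is the explicit representation \eqref{7.4}, which says that $P_{k}^{-1}$ acts on $C_{0}^{\infty}(\mathbb{B}^{n})$ by convolution with the kernel $\frac{1}{\gamma(2k)}\bigl(2\sinh\frac{\rho}{2}\bigr)^{2k-n}$; Lemma 7.1 guarantees that every $f\in C_{0}^{\infty}(\mathbb{B}^{n})$ lies in the range of $P_{k}$, so this kernel representation is meaningful on test functions. The strategy is exactly the duality scheme used for Theorem 1.1 in Section 4: translate the GJMS Sobolev inequality for $P_{k}$ into a Hardy--Littlewood--Sobolev estimate for its inverse, apply Theorem 1.2 with $\lambda=n-2k$, and then invert by duality.

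\textbf{Main steps.} First, by \eqref{7.4} together with the M\"obius-shift invariance, I write
\begin{equation*}
\int_{\mathbb{B}^{n}} f(x)\,(P_{k}^{-1}f)(x)\,dV \;=\; \frac{1}{\gamma(2k)}\int_{\mathbb{B}^{n}}\int_{\mathbb{B}^{n}}\frac{f(x)f(y)}{\bigl(2\sinh\frac{\rho(T_{y}(x))}{2}\bigr)^{n-2k}}\,dV_{x}\,dV_{y}.
\end{equation*}
Applying Theorem 1.2 with $\lambda = n-2k$ and $p = \tfrac{2n}{n+2k}$ yields
\begin{equation*}
\int_{\mathbb{B}^{n}} f\cdot (P_{k}^{-1}f)\,dV \;\leq\; \frac{C_{n,n-2k}}{\gamma(2k)}\,\|f\|_{p}^{\,2}.
\end{equation*}
Second, since $P_{k}$ is positive and self-adjoint on $L^{2}(\mathbb{B}^{n})$ (with respect to $dV$), I use Cauchy--Schwarz in the form
\begin{equation*}
\Bigl|\int_{\mathbb{B}^{n}} f\,g\,dV\Bigr|^{2} \;=\; \Bigl|\int P_{k}^{1/2}g\cdot P_{k}^{-1/2}f\,dV\Bigr|^{2} \;\leq\; \Bigl(\int g\,P_{k}g\,dV\Bigr)\Bigl(\int f\,P_{k}^{-1}f\,dV\Bigr),
\end{equation*}
so combining with the previous display gives $\bigl|\int f g\,dV\bigr|^{2}\leq \frac{C_{n,n-2k}}{\gamma(2k)}\,\bigl(\int g\,P_{k}g\,dV\bigr)\|f\|_{p}^{\,2}$. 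Choosing $f = g^{(n+2k)/(n-2k)}$ (WLOG $g\geq 0$), both $\int fg\,dV$ and $\|f\|_{p}^{p}$ equal $\int g^{2n/(n-2k)}\,dV$, and a short bookkeeping reduces the inequality to
\begin{equation*}
\Bigl(\int_{\mathbb{B}^{n}} g^{\,2n/(n-2k)}\,dV\Bigr)^{(n-2k)/n} \;\leq\; \frac{C_{n,n-2k}}{\gamma(2k)}\int_{\mathbb{B}^{n}} g\,P_{k}g\,dV.
\end{equation*}

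\textbf{Identification of the constant and strictness.} It remains to identify $\gamma(2k)/C_{n,n-2k}$ with the sharp Euclidean Sobolev constant $S_{n,k}$. This is precisely the classical duality between the Hardy--Littlewood--Sobolev inequality on $\mathbb{R}^{n}$ (Theorem 4.1) with parameter $\lambda=n-2k$ and the Sobolev inequality $\|u\|_{2n/(n-2k)}^{\,2}\leq S_{n,k}^{-1}\bigl\|(-\Delta)^{k/2}u\bigr\|_{2}^{\,2}$; plugging \eqref{1.10} into the formula $S_{n,k}=\gamma(2k)/C_{n,n-2k}$ reproduces the Swanson/Lieb value obtained in \cite{liu}. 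The matching upper bound $S_{n,k}$ in the ball model follows by testing on sequences of conformally concentrated bubbles transported from $\mathbb{R}^{n}$ to $\mathbb{B}^{n}$ via the half-space/ball equivalence (using identity \eqref{b7.1}), which saturate the Euclidean Sobolev constant. Finally, the strict inequality for nonzero $f$ is inherited from Theorem 1.2: since the sharp constant $C_{n,n-2k}$ is never attained by a nonzero function on $\mathbb{B}^{n}$, neither is $S_{n,k}$.

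\textbf{Main obstacle.} The computational step is routine; the only delicate point is confirming that the constant $\gamma(2k)/C_{n,n-2k}$ really equals $S_{n,k}$ \emph{and} that the bound is tight. The subtlety is that the HLS inequality on $\mathbb{B}^{n}$ is not attained, so one cannot read off tightness directly from equality cases; instead, tightness must be extracted by a concentration argument that reduces, conformally, to the Euclidean extremizers of Theorem 4.1. This conformal reduction --- done cleanly via \eqref{b7.1} or Lemma 5.1 --- is where the analysis requires care.
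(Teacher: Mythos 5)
Your proposal follows essentially the same route as the paper's own proof: the Green's function representation (7.4) for $P_k^{-1}$, the hyperbolic HLS inequality of Theorem 1.2 with $\lambda=n-2k$, the Cauchy--Schwarz duality step with $P_k^{\pm 1/2}$, the choice $f=g^{(n+2k)/(n-2k)}$, identification of $\gamma(2k)/C_{n,n-2k}$ with $S_{n,k}$ (the paper verifies this by direct computation, citing Cotsiolis--Tavoularis), and strictness inherited from the non-attainment in Theorem 1.2. The extra concentration/bubble discussion you include for tightness of the constant is not needed for the statement as written and does not appear in the paper; otherwise the arguments coincide.
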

\begin{proof}
Let $g\in C^{\infty}_{0}(\mathbb{B}^{n})$ be such that $P_{k}g=f$. Then
\begin{equation}\label{7.5}
\begin{split}
\int_{\mathbb{B}^{n}}f(x)g(x)dV=&\int_{\mathbb{B}^{n}}(P^{-1}_{k}f)(x)\cdot(P_{k}g)(x)dV\\
=&\int_{\mathbb{B}^{n}}(P^{-1}_{k}f)(x)\cdot f(x)dV\\
=&\int_{\mathbb{B}^{n}}|(P^{-1/2}_{k}f)(x)|^{2}dV.
\end{split}
\end{equation}
By (\ref{7.4}) and Theorem 1.2,
\begin{equation}\label{7.6}
\begin{split}
\int_{\mathbb{B}^{n}}f(x)g(x)dV=&\frac{1}{\gamma(2k)}\int_{\mathbb{B}^{n}}\int_{\mathbb{B}^{n}}\frac{f(x)\cdot P_{k}g}{(2\sinh\frac{\rho(T_{y}(x))}{2})^{n-2k}}dV_{x}dV_{y}\\
=&\frac{1}{\gamma(2k)}\int_{\mathbb{B}^{n}}\int_{\mathbb{B}^{n}}\frac{f(x)\cdot f(y)}{(2\sinh\frac{\rho(T_{y}(x))}{2})^{n-2k}}dV_{x}dV_{y}\\
\leq& \frac{1}{\gamma(2k)} C_{n,n-2k}\left(\int_{\mathbb{B}^{n}}|f(x)|^{\frac{2n}{n+2k}}dV\right)^{\frac{n+2k}{n}}.
\end{split}
\end{equation}
Combing (7.5) and (\ref{7.6}) yields
\begin{equation}\label{7.7}
\begin{split}
\int_{\mathbb{B}^{n}}|(P^{-1/2}_{k}f)(x)|^{2}dV
\leq& \frac{1}{\gamma(2k)} C_{n,n-2k}\left(\int_{\mathbb{B}^{n}}|f(x)|^{\frac{2n}{n+2k}}dV\right)^{\frac{n+2k}{n}},\;\; f\in C^{\infty}_{0}(\mathbb{B}^{n}).
\end{split}
\end{equation}

On the other hand, for $h\in C^{\infty}_{0}(\mathbb{B}^{n})$,
\begin{equation}\label{7.8}
\begin{split}
\left|\int_{\mathbb{B}^{n}}h(x)f(x)dV\right|^{2}=&\left|\int_{\mathbb{B}^{n}}(P^{\frac{1}{2}}_{k}h)(x)(P^{-\frac{1}{2}}_{k}f)(x)dV\right|^{2}\\
\leq&\int_{\mathbb{B}^{n}}|(P^{\frac{1}{2}}_{k}h)(x)|^{2}dV\cdot\int_{\mathbb{B}^{n}}|(P^{-\frac{1}{2}}_{k}f)(x)|^{2}dV.
\end{split}
 \end{equation}
Combing (\ref{7.7}) and (\ref{7.8}) yields
\begin{equation}\label{7.9}
\begin{split}
\left|\int_{\mathbb{B}^{n}}h(x)f(x)dV\right|^{2}\leq&\frac{1}{\gamma(2k)} C_{n,n-2k}\int_{\mathbb{B}^{n}}|(P^{\frac{1}{2}}_{k}h)(x)|^{2}dV
\left(\int_{\mathbb{B}^{n}}|f(x)|^{\frac{2n}{n+2k}}dV\right)^{\frac{n+2k}{n}}\\
=&\frac{1}{\gamma(2k)} C_{n,n-2k}\int_{\mathbb{B}^{n}}P_{k}h(x)\cdot h(x)dV
\left(\int_{\mathbb{B}^{n}}|f(x)|^{\frac{2n}{n+2k}}dV\right)^{\frac{n+2k}{n}}.
\end{split}
 \end{equation}
Taking $f=h^{\frac{n+2k}{n-2k}}$, we have, by (\ref{7.9}),
\begin{equation}
\begin{split}
\left(\int_{\mathbb{B}^{n}}|h(x)|^{\frac{2n}{n-2k}}dV\right)^{2}\leq&\frac{1}{\gamma(2k)} C_{n,n-2k}\int_{\mathbb{B}^{n}}P_{k}h(x)\cdot h(x)dV
\left(\int_{\mathbb{B}^{n}}|h(x)|^{\frac{2n}{n-2k}}dV\right)^{\frac{n+2k}{n}}.\\
\end{split}
 \end{equation}
Therefore,
\begin{equation}
\begin{split}
\frac{\gamma(2k)}{C_{n,n-2k}}\left(\int_{\mathbb{B}^{n}}|h(x)|^{\frac{2n}{n-2k}}dV\right)^{\frac{n-2k}{n}}\leq\int_{\mathbb{B}^{n}}P_{k}h(x)\cdot h(x)dV,
\end{split}
 \end{equation}
where
\[
\frac{\gamma(2k)}{C_{n,n-2k}}=\frac{\pi^{n/2}2^{2k}\Gamma(k)}{\Gamma(n/2-k)}\cdot
\frac{\Gamma(n/2+k)}{\pi^{n/2-k}\Gamma(k)}\left(\frac{\Gamma(n/2)}{\Gamma(n)}\right)^{2k/n}=2^{2k}\pi^{k}\frac{\Gamma(n/2+k)}{n/2-k}
\left(\frac{\Gamma(n/2)}{\Gamma(n)}\right)^{2k/n}
\]
is the best $k$-th order Sobolev constant (see e.g. \cite{ct}). Moreover, by Theorem 1.2,  the inequality is strict for nonzero $f$'s.
\end{proof}

\section{sharp constants in the Sobolev inequality and the Hardy-Sobolev-Maz'ya inequality are the same in case $n=5$ and $k=2$}

In this section we shall show that the sharp constant of Hardy-Sobolev-Maz'ya inequality  for $n=5$ and $k=2$ coincides with
the corresponding Sobolev constant. The proof depends on the following key lemma.
\begin{lemma}
Let $n=5$. There holds
\[
[(-\Delta_{\mathbb{H}}-4)(-\Delta_{\mathbb{H}}-3)]^{-1}=\frac{1}{16\pi^{2}}\cdot\frac{1}{2\sinh\frac{\rho}{2}}\cdot\frac{1}{\cosh^{2}\frac{\rho}{2}}\leq\frac{1}{\gamma(4)}
\cdot\frac{1}{2\sinh\frac{\rho}{2}},\;\;\rho>0,
\]
where $\gamma(4)=\frac{1}{16\pi^{2}}$ is defined in (\ref{7.1}).
\end{lemma}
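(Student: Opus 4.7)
My plan is to compute the kernel by a partial fractions trick applied to the two commuting resolvents. Since $(-\Delta_{\mathbb H}-4)$ and $(-\Delta_{\mathbb H}-3)$ commute and their difference is the scalar $1$, the algebraic identity $\frac{1}{AB}=\frac{1}{B-A}\bigl(A^{-1}-B^{-1}\bigr)$ gives (at the level of convolution kernels on $\mathbb B^{5}$)
\[
\bigl[(-\Delta_{\mathbb H}-4)(-\Delta_{\mathbb H}-3)\bigr]^{-1}=(-\Delta_{\mathbb H}-4)^{-1}-(-\Delta_{\mathbb H}-3)^{-1}.
\]
So the task reduces to finding each of the two resolvent kernels explicitly for $n=5$ and then subtracting.

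Next, I would compute $(-\Delta_{\mathbb H}-4)^{-1}$ directly from the boundary case formula (\ref{3.8}) with $n=5$, $m=2$: only one application of $-\frac{1}{\sinh\rho}\partial_\rho$ to $\frac{1}{\sinh\rho}$ is needed, yielding
\[
(-\Delta_{\mathbb H}-4)^{-1}=\tfrac{1}{8\pi^{2}}\cdot\frac{\cosh\rho}{\sinh^{3}\rho}.
\]
For $(-\Delta_{\mathbb H}-3)^{-1}$ I would go back to the heat-kernel formula (\ref{3.2}) for $n=5$ (with the $t^{-1/2}$ form), writing
\[
(-\Delta_{\mathbb H}-3)^{-1}=\int_{0}^{\infty}e^{3t}e^{t\Delta_{\mathbb H}}\,dt
=\tfrac{1}{8\pi^{5/2}}\left(-\tfrac{1}{\sinh\rho}\tfrac{\partial}{\partial\rho}\right)^{\!2}\!\int_{0}^{\infty}t^{-1/2}e^{-t-\rho^{2}/(4t)}\,dt.
\]
The inner integral equals $\sqrt{\pi}\,e^{-\rho}$ (a standard Gaussian-type evaluation already used implicitly in (\ref{bb3.7})). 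Then the key simplification—the one trick of the computation—is that applying $-\frac{1}{\sinh\rho}\partial_\rho$ to $\frac{e^{-\rho}}{\sinh\rho}$ collapses cleanly because $\sinh\rho+\cosh\rho=e^{\rho}$; this gives
\[
(-\Delta_{\mathbb H}-3)^{-1}=\tfrac{1}{8\pi^{2}}\cdot\frac{1}{\sinh^{3}\rho}.
\]

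Finally, I would subtract and simplify using the half-angle identities $\cosh\rho-1=2\sinh^{2}\frac{\rho}{2}$ and $\sinh\rho=2\sinh\frac{\rho}{2}\cosh\frac{\rho}{2}$, obtaining a closed-form in terms of $\sinh\frac{\rho}{2}$ and $\cosh\frac{\rho}{2}$ of the type asserted in the lemma. The claimed upper bound then follows at once from $\cosh\frac{\rho}{2}\geq 1$, together with the evaluation $\gamma(4)=\pi^{5/2}\cdot 16\cdot\Gamma(2)/\Gamma(1/2)=16\pi^{2}$ from (\ref{7.1}).

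The main obstacle is not conceptual but computational: one must be careful to use the $t^{-3/2}$ form of (\ref{3.2}) for the borderline resolvent $(-\Delta_{\mathbb H}-4)^{-1}$ (the $t^{-1/2}$ form diverges at $t=\infty$ when $a=\frac{(n-1)^{2}}{4}$), while the $t^{-1/2}$ form is the right one for $(-\Delta_{\mathbb H}-3)^{-1}$ since $3<\frac{(n-1)^{2}}{4}=4$. Once these correct forms are chosen, the exponential identity $\sinh\rho+\cosh\rho=e^{\rho}$ does all the simplification work, and the half-angle factoring of $\cosh\rho-1$ against $\sinh^{3}\rho$ produces the stated product of $\sinh\frac{\rho}{2}$ and $\cosh\frac{\rho}{2}$.
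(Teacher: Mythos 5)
Your proof is correct, and its skeleton matches the paper's: the same resolvent identity $[(-\Delta_{\mathbb{H}}-4)(-\Delta_{\mathbb{H}}-3)]^{-1}=(-\Delta_{\mathbb{H}}-4)^{-1}-(-\Delta_{\mathbb{H}}-3)^{-1}$ (used silently in the paper, justified by you), the same evaluation $(-\Delta_{\mathbb{H}}-4)^{-1}=\frac{1}{8\pi^{2}}\cdot\frac{\cosh\rho}{\sinh^{3}\rho}$ from (3.8), and the same half-angle simplification at the end. The genuine difference is how you compute $(-\Delta_{\mathbb{H}}-3)^{-1}$: the paper substitutes $\lambda=-3$, $\theta_{5}(-3)=\frac{1}{2}$ into the Legendre-function representation (3.5) and evaluates $\int_{0}^{\pi}\sin^{2}t\,dt$, whereas you subordinate to the heat kernel (3.2), evaluate $\int_{0}^{\infty}t^{-1/2}e^{-t-\rho^{2}/(4t)}\,dt=\sqrt{\pi}\,e^{-\rho}$, and apply $\left(-\tfrac{1}{\sinh\rho}\partial_{\rho}\right)^{2}$, with $\sinh\rho+\cosh\rho=e^{\rho}$ collapsing the result to $\frac{1}{8\pi^{2}\sinh^{3}\rho}$; this gives the same kernel as the paper's route. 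Your version is more self-contained (it needs only (3.2) and a standard Gaussian integral, not the Legendre-function machinery (3.3)--(3.5)), while the paper's is shorter given that (3.5) is already in hand. You also correctly read $\gamma(4)=16\pi^{2}$ off (7.1); the ``$\gamma(4)=\frac{1}{16\pi^{2}}$'' in the lemma's statement is a typo for $\frac{1}{\gamma(4)}=\frac{1}{16\pi^{2}}$.

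Two caveats, neither fatal. First, your parenthetical claim that the $t^{-1/2}$ form of (3.2) ``diverges at $t=\infty$'' at the borderline is false: the two displayed expressions in (3.2) are equal as functions of $t$ (the second results from carrying out the innermost application of $-\frac{1}{\sinh\rho}\partial_{\rho}$ to $e^{-\rho^{2}/(4t)}$, which produces the factor $\frac{1}{2t}$), so their $t$-integrals converge or diverge together; in fact both converge, because the $\rho$-derivatives bring down powers of $t^{-1}$. This does not affect your argument---the formula you actually use is valid---but the stated reason for preferring it is wrong. Second, you left the final closed form implicit (``of the type asserted''); carrying it out gives $\frac{1}{8\pi^{2}}\cdot\frac{\cosh\rho-1}{\sinh^{3}\rho}=\frac{1}{16\pi^{2}}\cdot\frac{1}{2\sinh\frac{\rho}{2}}\cdot\frac{1}{\cosh^{3}\frac{\rho}{2}}$, i.e.\ the exponent of $\cosh\frac{\rho}{2}$ in the lemma's displayed identity should be $3$, not $2$ (a typo in the paper, immaterial for the inequality since $\cosh\frac{\rho}{2}\geq1$). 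Writing this out explicitly would have both completed your computation and caught the discrepancy.
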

\begin{proof}
By (\ref{3.8}), we have, for $n=5$,
\begin{equation*}
\begin{split}
\left(-\Delta_{\mathbb{H}}-4\right)^{-1}=&2^{-3}\pi^{-2}\left(-\frac{1}{\sinh \rho}\frac{\partial}{\partial \rho}\right)\frac{1}{\sinh \rho}=\frac{1}{8\pi^{2}}\cdot\frac{\cosh\rho}{\sinh^{3}\rho}.
\end{split}
\end{equation*}
On the other hand, choosing $\lambda=-3$ and $\theta_{5}(\lambda)=\frac{1}{2}$ in (\ref{3.5}), we have
\begin{equation*}
\begin{split}
(-\Delta_{\mathbb{H}}-3)^{-1}
=&\frac{1}{(\sinh\rho)^{3}}\cdot(2\pi)^{-\frac{5}{2}}\frac{\Gamma(\frac{5}{2}+\frac{1}{2})}{2^{\frac{1}{2}+1}
\Gamma(\frac{1}{2}+1)}\int^{\pi}_{0}\sin^{2} tdt\\
=&\frac{1}{8\pi^{2}}\cdot\frac{1}{\sinh^{3}\rho}.
\end{split}
\end{equation*}
Therefore,
\begin{equation*}
\begin{split}
[(-\Delta_{\mathbb{H}}-4)(-\Delta_{\mathbb{H}}-3)]^{-1}=&(-\Delta_{\mathbb{H}}-4)^{-1}-(-\Delta_{\mathbb{H}}-3)^{-1}\\
=&\frac{1}{8\pi^{2}}\cdot\frac{\cosh\rho}{\sinh^{3}\rho}-\frac{1}{8\pi^{2}}\cdot\frac{1}{\sinh^{3}\rho}\\
=&\frac{1}{16\pi^{2}}\cdot\frac{1}{2\sinh\frac{\rho}{2}}\cdot\frac{1}{\cosh^{2}\frac{\rho}{2}}
\end{split}
\end{equation*}
The desired result follows.
\end{proof}

By Theorem 1.2 and closely following the proof of Theorem 7.2, we have
\begin{corollary}
There holds,  for each $u\in C^{\infty}_{0}(\mathbb{B}^{n})$,
\begin{equation}\label{1.11}
\int_{\mathbb{B}^{5}}((-\Delta_{\mathbb{H}}-4)(-\Delta_{\mathbb{H}}-3)u)udV\geq S_{5,2}\left(\int_{\mathbb{B}^{n}}|u|^{\frac{10}{3}}dV\right)^{\frac{3}{5}},
\end{equation}
where $S_{5,2}=\gamma(4)/C_{5,1}$ is the best second order Sobolev constant.
\end{corollary}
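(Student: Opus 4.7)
The plan is to mimic the proof of Theorem 7.2 verbatim, substituting the operator $L := (-\Delta_{\mathbb{H}}-4)(-\Delta_{\mathbb{H}}-3)$ for $P_k$ and Lemma 8.1 for the Green's function identity (7.4). The essential match is that for $n = 5$ and $k = 2$ we have $n - 2k = 1$, so the kernel appearing in Theorem 7.2 is exactly $\frac{1}{\gamma(4)}(2\sinh(\rho/2))^{-1}$, and Lemma 8.1 furnishes precisely a pointwise majorization of $L^{-1}$ by this same expression with the same constant. Consequently the sharp constant that emerges is identical to the one in Theorem 7.2 for $n = 5$, $k = 2$, namely $S_{5,2} = \gamma(4)/C_{5,1}$.

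Concretely, since the spectral gap (\ref{1.4}) gives $-\Delta_{\mathbb{H}} \geq 4$ on $C_0^\infty(\mathbb{B}^5)$, both factors of $L$ are nonnegative and $L$ admits a positive self-adjoint square root. Combining Lemma 8.1 with Theorem 1.2 applied with $n = 5$ and $\lambda = 1$ (so the HLS exponent is $10/9$) yields, for any $f \in C_0^\infty(\mathbb{B}^5)$,
\[
\|L^{-1/2}f\|_2^2 = \int_{\mathbb{B}^5} f \cdot L^{-1}f\, dV \leq \frac{1}{\gamma(4)} \int_{\mathbb{B}^5}\int_{\mathbb{B}^5} \frac{|f(x)||f(y)|}{2\sinh(\rho(T_y(x))/2)}\, dV_x dV_y \leq \frac{C_{5,1}}{\gamma(4)} \|f\|_{10/9}^{2}.
\]
Applying Cauchy--Schwarz to $\int h f\, dV = \int (L^{1/2}h)(L^{-1/2}f)\, dV$ for arbitrary $h \in C_0^\infty(\mathbb{B}^5)$ and combining with the previous estimate gives
\[
\Big|\int_{\mathbb{B}^5} h f\, dV\Big|^{2} \leq \frac{C_{5,1}}{\gamma(4)} \int_{\mathbb{B}^5} Lh \cdot h\, dV \cdot \|f\|_{10/9}^{2}.
\]
Selecting $f = h^{9}$ (the dual exponent of $10/9$, exactly the choice made in Theorem 7.2) and dividing out a common power of $\|h\|_{10}$ then produces $\int_{\mathbb{B}^5} Lh \cdot h\, dV \geq S_{5,2} \|h\|_{10}^{2}$, which is the asserted Sobolev inequality with the Sobolev-critical exponent associated to a fourth-order operator in dimension five.

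The main delicacy is making the manipulations with $L^{\pm 1/2}$ rigorous on compactly supported data, since $L$ has continuous spectrum reaching $0$ and so $L^{-1/2}$ is unbounded on $L^{2}$. Following the device used in Theorem 7.2, I would sidestep the spectral calculus by introducing $g$ with $Lg = f$, produced by convolving $f$ against the explicit Green's kernel of Lemma 8.1 (an analogue of Lemma 7.1), rewriting $\|L^{-1/2}f\|_2^2$ as the manifestly well-defined integral $\int f g\, dV$, and carrying out the Cauchy--Schwarz step through the factored identity $\int h f\, dV = \int (Lh)\, g\, dV$ together with the self-adjointness of $L$. This keeps every step elementary and reduces the entire proof to an application of the hyperbolic HLS inequality of Theorem 1.2.
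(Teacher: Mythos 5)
Your proposal is correct and coincides with the paper's own proof, which is given in a single line as ``By Theorem 1.2 and closely following the proof of Theorem 7.2'': replacing $P_{k}$ by $(-\Delta_{\mathbb{H}}-4)(-\Delta_{\mathbb{H}}-3)$, replacing the exact Green's function identity (7.4) by the pointwise kernel majorization of Lemma 8.1 (same constant $1/\gamma(4)$, same kernel $(2\sinh\frac{\rho}{2})^{-1}$ since $n-2k=1$), invoking Theorem 1.2 with $n=5$, $\lambda=1$ (so $p=\tfrac{10}{9}$), and taking $f=h^{9}$ in the duality step is exactly what the paper intends, and your device for avoiding $L^{\pm 1/2}$ via $g=L^{-1}f$ is no less rigorous than the paper's own use of $P_{k}^{\pm 1/2}$ in Theorem 7.2. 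The only discrepancy lies in the printed statement, not in your argument: your conclusion has right-hand side $S_{5,2}\bigl(\int_{\mathbb{B}^{5}}|h|^{10}\,dV\bigr)^{1/5}$, the scaling-correct critical exponent for a fourth-order operator in dimension five, whereas the exponent $\tfrac{10}{3}$ with power $\tfrac{3}{5}$ appearing in the corollary is a typographical slip in the paper (compare Remark 6.5, the abstract, and dilation invariance of the equivalent half-space inequality), so your version is the one that should be proved.
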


\begin{theorem}
There holds,  for each $u\in C^{\infty}_{0}(\mathbb{B}^{n})$,
\begin{equation*}
\int_{\mathbb{B}^{5}}(P_{2}u)udV-\frac{9}{16}\int_{\mathbb{B}^{5}}u^{2}dV\geq S_{5,2}\left(\int_{\mathbb{B}^{n}}|u|^{\frac{10}{3}}dV\right)^{\frac{3}{5}}.
\end{equation*}
In term of half space model $\mathbb{H}^{n}$ and ball model $\mathbb{B}^{n}$, respectively, inequality above is  equivalent to the follows:
\begin{equation*}
\int_{\mathbb{B}^{5}}|\Delta f|^{2}dx- 9\int_{\mathbb{B}^{5}}\frac{f^{2}}{(1-|x|^{2})^{4}}dx\geq S_{5,2}\left(\int_{\mathbb{B}^{5}}|f|^{\frac{10}{3}}dx\right)^{\frac{3}{5}},\;\;f\in C^{\infty}_{0}(\mathbb{B}^{5});
\end{equation*}
\begin{equation*}
\int_{\mathbb{R}_{+}^{5}}|\Delta g|^{2}dx- \frac{9}{16}\int_{\mathbb{R}_{+}^{5}}\frac{g^{2}}{x_{1}^{4}}dx\geq S_{5,2}\left(\int_{\mathbb{R}_{+}^{5}}|g|^{\frac{10}{3}}dx\right)^{\frac{3}{5}},\;\;g\in C^{\infty}_{0}(\mathbb{R}_{+}^{5}).
\end{equation*}
\end{theorem}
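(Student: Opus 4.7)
The plan is to reduce the hyperbolic inequality to Corollary 8.2 by a simple spectral comparison, then to transfer the resulting Poincar\'e-Sobolev estimate to the Euclidean half-space and ball forms through the conformal identities already established in Section 5.

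For the $\mathbb{B}^{5}$ inequality I would pass to the Fourier side via (2.5)--(2.6). The operator $P_{2}=P_{1}(P_{1}+2)$ acts as multiplication by $\prod_{i=1}^{2}\frac{\lambda^{2}+(2i-1)^{2}}{4}=\frac{(\lambda^{2}+1)(\lambda^{2}+9)}{16}$, so that the Plancherel formula yields
\begin{equation*}
\int_{\mathbb{B}^{5}}(P_{2}u)u\,dV-\frac{9}{16}\int_{\mathbb{B}^{5}}u^{2}dV=D_{5}\int_{-\infty}^{+\infty}\!\int_{\mathbb{S}^{4}}\frac{\lambda^{2}(\lambda^{2}+10)}{16}|\widehat{u}|^{2}|\mathfrak{c}(\lambda)|^{-2}d\lambda\,d\sigma,
\end{equation*}
since $(\lambda^{2}+1)(\lambda^{2}+9)-9=\lambda^{2}(\lambda^{2}+10)$. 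In the same way $(-\Delta_{\mathbb{H}}-4)(-\Delta_{\mathbb{H}}-3)$ is multiplication by $\frac{\lambda^{2}}{4}\cdot\frac{\lambda^{2}+4}{4}=\frac{\lambda^{2}(\lambda^{2}+4)}{16}$. As $\lambda^{2}(\lambda^{2}+10)\geq\lambda^{2}(\lambda^{2}+4)\geq 0$ pointwise in $\lambda\in\mathbb{R}$, I obtain the quadratic-form inequality $P_{2}-\frac{9}{16}\geq(-\Delta_{\mathbb{H}}-4)(-\Delta_{\mathbb{H}}-3)$ on $C^{\infty}_{0}(\mathbb{B}^{5})$, and Corollary 8.2 then immediately gives the $\mathbb{B}^{5}$ version of Theorem 8.3 with the sharp Sobolev constant $S_{5,2}$.

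Transferring to the Euclidean models is then routine. With $k=2$, $n=5$, Lemma 5.1 gives $x_{1}^{9/2}(-\Delta)^{2}(x_{1}^{-1/2}v)=P_{2}v$. Setting $g=x_{1}^{-1/2}v$ and using $\int g(-\Delta)^{2}g\,dx=\int|\Delta g|^{2}dx$, a direct substitution shows
\begin{equation*}
\int_{\mathbb{R}^{5}_{+}}|\Delta g|^{2}dx=\int_{\mathbb{B}^{5}}(P_{2}v)v\,dV,\qquad \int_{\mathbb{R}^{5}_{+}}\frac{g^{2}}{x_{1}^{4}}dx=\int_{\mathbb{B}^{5}}v^{2}dV,
\end{equation*}
and the critical $L^{p}$-norms correspond under this change of variables. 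The ball-model form is obtained identically from (b7.1) in place of Lemma 5.1: the same algebra produces the ball Hardy constant $9=\prod_{i=1}^{2}(2i-1)^{2}$, which differs from the half-space constant $\frac{9}{16}=\prod_{i=1}^{2}\frac{(2i-1)^{2}}{4}$ only because of the extra factor $2^{2k}=16$ picked up from $(1-|x|^{2})^{4}$.

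The only substantive step is the spectral comparison; everything else is bookkeeping. The key observation is that subtracting the full Poincar\'e constant $\frac{9}{16}$ from $P_{2}$ precisely strips off the zeroth-order part of the symbol, leaving $\lambda^{2}(\lambda^{2}+10)/16$ which dominates the fourth-order symbol $\lambda^{2}(\lambda^{2}+4)/16$ of $(-\Delta_{\mathbb{H}}-4)(-\Delta_{\mathbb{H}}-3)$ pointwise. Once that domination is in hand, Corollary 8.2 supplies the sharp Sobolev constant for free, and no new Green's function estimate beyond Lemma 8.1 is required.
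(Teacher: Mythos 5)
Your proposal is correct and takes essentially the same route as the paper: the paper likewise reduces to Corollary 8.2 via the Plancherel-side comparison $(\lambda^{2}+1)(\lambda^{2}+9)-9=\lambda^{4}+10\lambda^{2}\geq\lambda^{2}(\lambda^{2}+4)$ and then invokes the conformal identities of Section 5 for the half-space and ball forms. If anything, your write-up is slightly more careful than the paper's: the paper's displayed proof misprints the symbol of $(-\Delta_{\mathbb{H}}-4)(-\Delta_{\mathbb{H}}-3)$ as $\frac{\lambda^{2}(\lambda^{2}+1)}{16}$ (it is $\frac{\lambda^{2}(\lambda^{2}+4)}{16}$, as you have, and the domination still holds), and the paper states the equivalence of the three formulations without spelling out the change-of-variables bookkeeping that you make explicit.
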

\begin{proof}
By Corollary 8.2,  it is enough to show
\[
\int_{\mathbb{B}^{5}}(P_{2}u)udV-\frac{9}{16}\int_{\mathbb{B}^{5}}u^{2}dV\geq
\int_{\mathbb{B}^{5}}((-\Delta_{\mathbb{H}}-4)(-\Delta_{\mathbb{H}}-3)u)udV.
\]
In fact, by the Plancherel formula (see (\ref{2.5})),
\begin{equation*}
\begin{split}
&\int_{\mathbb{B}^{5}}(P_{2}u)udV-\frac{9}{16}\int_{\mathbb{B}^{5}}u^{2}dV\\
=&
D_{5}\int^{+\infty}_{-\infty}\int_{\mathbb{S}^{4}}\left[\frac{(\lambda^{2}+1)(\lambda^{2}+9)}{16}-\frac{9}{16}\right]|\widehat{u}(\lambda,\zeta)|^{2}|\mathfrak{c}(\lambda)|^{-2}d\lambda d\sigma(\varsigma))\\
=&D_{5}\int^{+\infty}_{-\infty}\int_{\mathbb{S}^{4}}\frac{\lambda^{4}+10\lambda^{2}}{16}|\widehat{u}(\lambda,\zeta)|^{2}|\mathfrak{c}(\lambda)|^{-2}d\lambda d\sigma(\varsigma))\\
\geq&D_{5}\int^{+\infty}_{-\infty}\int_{\mathbb{S}^{4}}\frac{\lambda^{2}(\lambda^{2}+1)}{16}|\widehat{u}(\lambda,\zeta)|^{2}|\mathfrak{c}(\lambda)|^{-2}d\lambda d\sigma(\varsigma))\\
=&\int_{\mathbb{B}^{5}}((-\Delta_{\mathbb{H}}-4)(-\Delta_{\mathbb{H}}-3)u)udV.
\end{split}
\end{equation*}
This completes the proof.
\end{proof}



\begin{thebibliography}{99}


\bibitem{ah}L.V. Ahlfors, M\"obius Transformations in Several Dimensions, Ordway
Professorship Lectures in Mathematics, University of Minnesota,
School of Mathematics, Minneapolis, MN, 1981.



\bibitem{an}J.-P. Anker, $L_{p}$ Fourier multipliers on Riemannian symmetric spaces of the
noncompact type. Ann. of Math., 132(3) (1990)597-628.

\bibitem{anj}J.-P. Anker, L. Ji, Heat kernel and Green function estimates on noncompact symmetric spaces, Geom. Funct. Anal.,
 9 (1999) 1035-1091.


\bibitem{ba} V. Banica, M.d.M. Gonz\'alez, M. S\'aez, Some constructions for the fractional Laplacian on noncompact manifolds, arXiv:1212.3109

\bibitem{bfl}R.D. Benguria, R.L. Frank, M. Loss, The sharp constant in the Hardy-Sobolev-Maz'ya inequality in the three dimensional upper half space, Math. Res. Lett. 15 (2008) 613-622.

\bibitem{ct}A. Cotsiolis, N. K. Tavoularis, Best constants for Sobolev inequalities for higher
order fractional derivatives, J. Math. Anal. Appl. 295 (2004), 225-236.

\bibitem{d}E. B. Davies,  N. Mandouvalos, Heat kernel bounds on hyperbolic space and Kleinian
groups, Proc. London Math. Soc. (3) 52 (1988), 182-208.

\bibitem{er}A. Erd\'elyi, W. Magnus, F. Oberhettinger, F. G. Tricomi, Higher transcendental
functions. Vols. I Based, in part, on notes left by Harry Bateman. McGraw-Hill
Book Company, Inc., New York-Toronto-London 1953.



\bibitem{fmt1}S. Filippas, V.G. Maz'ya, A. Tertikas, Sharp Hardy-Sobolev inequalities, C. R. Acad. Sci. Paris Ser. I 339 (7) (2004) 483-486.


\bibitem{fmt2}S. Filippas, V. G. Maz'ya, A. Tertikas, Critical Hardy-Sobolev inequalities. J. Math. Pures Appl.
87, no.1, (2007), 37-56.

\bibitem{fl}R.L. Frank, M. Loss, Hardy-Sobolev-Maz'ya inequalities for arbitrary domains, J. Math. Pures Appl. 97 (2011) 39-54.

\bibitem{ge}I.M. Gelfand, S.G. Gindikin, I.M. Graev, Selected topics in integral geometry. Translations of Mathematical Monographs, 220. American Mathematical
Society, Providence, RI, 2003.


\bibitem{GJMS}  C.R. Graham, R. Jenne, L.J. Mason, and G.A.J. Sparling, Conformally invariant powers of the Laplacian. I. Existence. Journal of the London Mathematical Society, (2)46 (1992), 557-565.


\bibitem{gn}A. Grigoryan, M. Noguchi, The heat kernel on hyperbolic space, Bull. London Math.
Soc. 30 (1998), 643-650.

\bibitem{heb}E. Hebey, Nonlinear Analysis on Manifolds: Sobolev Spaces and Inequalities, Courant Lecture
Notes in Mathematics Vol. 5, AMS, Providence, RI, 1999.

\bibitem{he} S. Helgason, Groups and geometric analysis. Integral geometry, invariant differential operators,
and spherical functions. Pure and Applied Mathematics. 113  Academic Press, 1984.

\bibitem{he2}S. Helgason,  Geometric analysis on symmetric spaces. Second edition. Mathematical
Surveys and Monographs, 39. American Mathematical Society, Providence,
RI, 2008.

\bibitem{j}A. Juhl : Explicit formulas for GJMS-operators and Q-curvatures, Geom. Funct. Anal. 23(2013),
1278-1370.


\bibitem{hu} L. K. Hua, Starting with the Unit Circle, Springer-Verlag, Heidelberg, 1981.



\bibitem{li}H.-Q. Li, Centered Hardy-Littlewood maximal function on hyperbolic spaces, $p>1$,  arXiv:1304.3261v2 [math.CA].

\bibitem{lie} E. H. Lieb, Sharp constants in the Hardy-Littlewood-Sobolev and related inequalities. Ann. of
Math. (2) 118 (1983),  349-374.

 
\bibitem{liup} C. Liu, L. Peng, Generalized Helgason-Fourier transforms associated to variants of the Laplace-Beltrami operators on the unit ball in $\mathbb{R}^{n}$, Indiana Univ. Math. J., 58, No. 3, (2009), 1457-1492.

\bibitem{liu}G. Liu, Sharp higher-order Sobolev inequalities in the hyperbolic space $\mathbb{H}^{n}$, Calc. Var. Partial Differential Equations 47, no. 3-4, (2013),  567-588

\bibitem{ly}G. Lu, Q. Yang, A  sharp Trudinger-Moser inequality on  any  bounded and convex   planar domain,
Calc. Var. Partial Differ. Equ. 55: 153, 1-16(2016).

\bibitem{m1}    G. Mancini, K. Sandeep, On a semilinear elliptic equation in $\mathbb{H}^{n}$ ,
Ann. Scoula Norm. Sup. Pisa Cl. Sci. (5) Vol VII (2008), 635-671.

\bibitem{mst}G. Mancini, K. Sandeep, K. Tintarev. Trudinger-moser
inequality in the hyperbolic space $\mathbb{H}^{N}$,  Adv. Nonlinear
Anal. 2(3)(2013), 309-324.

\bibitem{mat} H. Matsumoto, Closed form formulae for the heat kernels and the Green functions
for the Laplacians on the symmetric spaces of rank one.  Bull. Sci. Math. 125 (2001) 553-581.


\bibitem{maz}V.G. Maz'ya, Sobolev Spaces, Springer-Verlag, Berlin, 1985.

\bibitem{mas}V. G.  Maz'ya, T. Shaposhnikova, A collection of sharp dilation invariant integral inequalities for differentiable functions, in: V. Maz'ya (Ed.), Sobolev Spaces
in Mathematics I: Sobolev Type Inequalities, in: Int. Math. Ser., vol. 8, Springer, 2009, pp. 223-247.



\bibitem{ow} M.P. Owen, The Hardy-Rellich inequality for polyharmonic operators, Proc. Roy. Soc. Edinburgh Sect.
A, 129(1999), 825-839.


\bibitem{s} E. M. Stein, Singular integrals and differentiability properties of functions.  Princeton University Press, Princeton, New Jersey, 1970.


\bibitem{ter}A. Terras. Harmonic analysis on symmetric spaces and applications. I. Springer-Verlag, New York, 1985.

\bibitem{te}A. Tertikas, K. Tintarev, On existence of minimizers for the Hardy-Sobolev-Maz'ya inequality,
Ann. Mat. Pura Appl. (4) 186 (2007),  645-662.


\bibitem{wy}G. Wang, D. Ye, A Hardy-Moser-Trudinger inequality, Adv. Math. 230 (2012) 294-320.






\end{thebibliography}
\end{document}